\definecolor{lg}{rgb}{0.7,0.7,0.7}
\newtheorem{Theorem}{\bf Theorem}%[subsection]}
\newtheorem{Lemma}[Theorem]{\bf Lemma}
\newtheorem{Proposition}[Theorem]{\bf Proposition}
\newtheorem{Remark}[Theorem]{\bf Remark}
\newtheorem{Corollary}[Theorem]{\bf Corollary}
\newtheorem{Observation}[Theorem]{\bf Observation}
\newcommand{\qed}{\hfill $\Box$ \bigskip}
\newcommand{\NN}{\mathbb N}
\newcommand{\CC}{\mathbb C}
\newcommand{\B}{\mathcal{B}}
\newcommand{\X}{\mathcal{X}}
\newcommand{\I}{\mathcal{I}}
\DeclareMathOperator{\s}{span}
\newcommand{\rk}{\mathrm{rank}\,}
\newcommand{\ix}{\mathcal{I}(\mathcal{X})}
\newcommand{\bx}{\mathcal{B}(\mathcal{X})}
\newcommand{\ixa}{\mathcal{I}^*(\mathcal{X})}
\newcommand{\im}{\mathrm{Im}\,}
\renewcommand{\ker}{\mathrm{Ker}\,}
\begin{document}
\title{Maps preserving the idempotency of Jordan products}

\author{
	Tatjana Petek\\ 
	\small \it University of Maribor, FERI, Smetanova ul. 17, SI-2000 Maribor, Slovenia\\
	\small \it IMFM, Jadranska ulica 19, 1000 Ljubljana, Slovenija\\
	\small \tt tatjana.petek@um.si
	\and 
	Gordana Radi\'{c}\\
	\small \it University of Maribor, FERI, Smetanova ul. 17, SI-2000 Maribor, Slovenia\\
	\small \it IMFM, Jadranska ulica 19, 1000 Ljubljana, Slovenija\\
	\small \tt gordana.radic@um.si
}

\date{}

\maketitle

\renewcommand{\thefootnote}{}

\footnote{2010 \emph{Mathematics Subject Classification}: Primary 47B49}

\footnote{\emph{Key words and phrases}: General maps, Complex Banach space, Preserver, Idempotent,  Jordan product.}

\renewcommand{\thefootnote}{\arabic{footnote}}
\setcounter{footnote}{0}

%%%%%%%%%%%%%%%%%%%%%%%%%%%%%
%%%%%%%%%%%% A B S T R A C T
%%%%%%%%%%%%%%%%%%%%%%%%%%%%
\begin{center}
	\textbf{Abstract}
\end{center}

Let $\bx$ be the algebra of all bounded linear operators on a complex Banach space $\X$  of dimension at least three. For an arbitrary nonzero $\lambda\in\CC$ we determine the form of  mappings  $\phi:\bx\to \bx$ with sufficiently large range such that $\lambda(AB+BA)$ is idempotent if and only if $\lambda(\phi(A)\phi(B)+\phi(B)\phi(A))$ is idempotent,   for all  $A$,$B\in \bx$. Note that $\phi$ is not assumed to be linear or additive.  

%%%%%%%%%%%%%%%%%%%%%%%%%%%
%%%%%%%%%%%% INTRODUCTION
%%%%%%%%%%%%%%%%%%%%%%%%%%%
\section{Introduction}

In the last few decades, there has been a lot of research interest in studying mappings on algebras, which leave invariant certain sets, relations and/or functions, often referred to as preservers. In this context, consider a map $\phi:{\bx}\rightarrow\bx$ such that for every pair $A,B\in\bx$ we have $\phi(A)\phi(B)=0$ whenever $AB=0$. These maps are typically called \textit{zero-product preserving maps}. The study of such maps has been extensive, mainly under the assumption that 
$\phi$ is linear (see \cite{Alaminos_Bresar} for a historical account). It is clear that characterizing these maps becomes significantly more challenging when neither linearity nor even additivity of $\phi$ is assumed. In this case, a stronger preserving property is required, leading to the condition $\phi(A)\phi(B)=0$ if and only if $AB=0$ for all $A, B\in\bx$. Maps satisfying this property are called \textit{preservers of zero-product in both directions}. In \cite{Bresar_Semrl}, the authors provide a detailed structure of all bijective maps on $\bx$ that preserve zero-product in both directions without assuming linearity. This result is relatively new, but a similar result concerning rank-one idempotents has been known for a longer time. Namely, in 2003, Moln\'{a}r characterized all bijective maps on rank-one idempotents that preserve the zero-product in both directions, which was a key step in his proof of Uhlhorn's Theorem, \cite{Molnar}.

Beyond zero-product-preserving maps, idempotent-preserving maps appear across multiple branches of mathematics. For instance, in mathematical morphology, many image-processing operations rely on idempotent transformations. Likewise, in operator theory and the study of $C^\ast$-algebras, preservers of idempotents under different types of products play a crucial role in analyzing morphisms and invariant subspaces. These contexts provide strong motivation for studying maps that preserve nonzero-idempotency. Specifically, a mapping $\phi:{\bx}\rightarrow\bx$ is said to \textit{preserve the (nonzero) idempotency of a product $\ast$} when $\phi(A)\ast\phi(B)$ is (nonzero) idempotent whenever $A\ast B$ is (nonzero) idempotent, for all $A,B\in\bx$. If the converse also holds --  the (nonzero) idempotency of $\phi(A)\ast\phi(B)$ implies the (nonzero) idempotency of $A\ast B$ -- then \textit{$\phi$ preserves (nonzero) idempotency of the product $\ast$ in both directions}. Restricting our attention to the standard product $A\ast B=AB$, the structure of surjective nonzero-idempotency preservers in both directions has been described in  \cite{Fang_Bai, Fang_Li_Pang}. A more general treatment, covering various types of products - including the usual product of $k$ operators, $k\geq2$, and the Jordan triple product $A\ast B=ABA$ - can be found in \cite{Petek}.

In this paper, we focus on the Jordan product of $A, B\in\bx$ defined as $A\ast B=AB+BA$. Additive zero-Jordan product preservers have been studied in \cite{Chebotar_Ke_Lee, Chebotar_Ke_Lee_Zhang, Hou_Jiao, Qi_Wang, Zhao_Hou}, while linear preservers of nonzero idempotency of the Jordan product were explored in \cite{Fang}. Let us note that the Jordan product of operators $A,B\in\bx$ is sometimes in the literature also defined as $A\circ B=\frac{1}{2}\left(AB+BA\right)$,
which has the desirable property that $I\circ I=I$. In the sequel, whenever we refer to Jordan product, we have the product $\circ$ in mind. 
Results concerning additive maps preserving nonzero idempotency of the Jordan product $\circ$ can be found in \cite{Taghavi_Hosseinzadeh_Rohi}. 

A natural question arises: can meaningful results still be obtained if the assumptions of linearity and additivity are removed? In this work, we address an even more challenging problem by setting the preserver property in a more general form: the Jordan product of $A$ and $B\in\bx$ is idempotent if and only if the Jordan product of $\phi(A)$ and $\phi(B)$ is idempotent. Notably,  we allow the possibility that a nonzero Jordan product between $A$ and $B$ corresponds to a zero-Jordan product between $\phi(A)$ and $\phi(B)$, and vice-versa. As far as we know, such a preserving property has not yet been studied in the literature. 

Throughout this paper, if not stated otherwise, $\X$ will denote a complex Banach space of dimension at least three, and $\bx$ will stand for the algebra of all bounded linear operators on $\X$. By ${\X}'$ we denote the topological dual of $\X$. We adopt standard notation and terminology concerning idempotents. Specifically, an operator $P\in\bx$ is idempotent if $P^2=P$ and anti-idempotent if $P^2=-P$ and  $P\neq0$. We denote by ${\ix}$ and $-\ix\subseteq\B(\X)$ the set of all idempotent and anti-idempotent operators, respectively. Let
$\mathcal{I}^\pm(\X)=(\ix\backslash\{0\})\cup-\ix$. The symbols $\ixa$,  ${\I}_1(\X)$ and $\mathcal{I}_1^\pm(\X)$  will represent the subsets of nonzero idempotents, rank-one idempotents and the union of rank-one idempotents and rank-one anti-idempotents, respectively.

We say that an idempotent $P$ is nontrivial if it is neither equal to zero nor to the identity operator on $\X$, denoted throughout by $I$. Analogously, an anti-idempotent $Q$ is nontrivial if $Q\neq-I$ (by definition $Q\neq0$). 

In the following, we will refer to a natural relation on idempotents called orthogonality. We say that $P,Q\in\ix$ are orthogonal, denoted by $P\bot Q$, if $PQ=QP=0$. We will also use the notation ${\CC}^\ast={\CC}\backslash\{0\}$ and $\mathcal{F}_k(\X)\subseteq\bx$ to denote the set of all operators of rank at most $k$, where $k\in\NN$. %Here is our first main result.

\begin{Theorem}\label{main_result}
Let $\X$ be an infinite-dimensional complex Banach space and let $\phi:{\bx}\rightarrow\bx$ be a map whose range contains the set $\mathcal{I}^\pm(\X)$. Suppose that
$$
A\circ B\in{\I(\X)}\quad\Longleftrightarrow\quad\phi(A)\circ\phi(B)\in\I(\X),
$$ 
for every $A,B\in\bx$. Then either there exists a bounded invertible linear or conjugate-linear operator $T:{\X}\rightarrow\X$ and  $\lambda\in\left\{-1,1\right\}$ such that
$$
\phi(X)=\lambda TXT^{-1},\qquad\text{ for every }X\in{\mathcal B}({\X}),
$$
or, only if $\X$ is reflexive,  there exists a bounded invertible linear or conjugate-linear operator $T:{\X'}\rightarrow \X$ and  $\lambda\in\left\{-1,1\right\}$  such that
$$
\phi(X)=\lambda TX'T^{-1},\qquad\text{ for every }X\in{\mathcal B}({\X}),
$$
where $X'$ denotes the adjoint operator of the operator $X$.
\end{Theorem}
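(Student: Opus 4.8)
The plan is to recover $\phi$ first on the rank-one idempotents, then on all idempotents, and finally on $\bx$. Since $0\circ B=0\in\I(\X)$ for every $B\in\bx$, the hypothesis forces $\phi(0)\circ\phi(B)\in\I(\X)$ for all $B$, and because $\mathcal{I}^\pm(\X)$ lies in the range of $\phi$ we get $\phi(0)\circ R\in\I(\X)$ for every $R\in\mathcal{I}^\pm(\X)$. Taking $R=I$ shows $\phi(0)\in\I(\X)$; if in addition $0\neq\phi(0)\neq I$, a rank-one idempotent $R$ whose range is a ``diagonal'' line for the splitting $\X=\im\phi(0)\oplus\ker\phi(0)$ makes $\phi(0)\circ R$ an operator of non-integer trace, hence not idempotent, while if $\phi(0)=I$ then any $R\in-\I(\X)$ already gives a contradiction. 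Hence $\phi(0)=0$.

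I would then study the relation $A\sim B:\Leftrightarrow A\circ B\in\I(\X)$, which $\phi$ preserves in both directions. A direct computation with $P=x\otimes f$ and $Q=y\otimes g$, normalised so that $f(x)=g(y)=1$, shows that for rank-one idempotents
$$
P\sim Q\iff\im P=\im Q\ \text{ or }\ \ker P=\ker Q\ \text{ or }\ P\bot Q ,
$$
and a parallel analysis describes $\sim$ on all of $\I_1^\pm(\X)$; notably a rank-one idempotent $P$ and a rank-one anti-idempotent $Q$ satisfy $P\sim Q$ precisely when $PQ=QP=0$. Using these facts one characterises $\I_1^\pm(\X)\cup\{0\}$, and inside it the rank-one idempotents, purely in terms of $\sim$; since $\phi$ preserves $\sim$ both ways and its range contains $\mathcal{I}^\pm(\X)$, this forces $\phi$ to restrict to a bijection of $\I_1^\pm(\X)$ that either fixes or interchanges the classes of rank-one idempotents and of rank-one anti-idempotents. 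Replacing $\phi$ by $-\phi$ if necessary -- which changes neither the hypothesis nor the shape of the conclusion -- we may assume $\phi$ restricts to a bijection of $\I_1(\X)$ onto itself that preserves the displayed incidence relation in both directions, and hence also the orthogonality relation $\bot$ (each of ``$\im P=\im Q$'' and ``$\ker P=\ker Q$'' being itself expressible through $\sim$).

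Because $\dim\X\geq 3$, a result of fundamental-theorem-of-projective-geometry type for orthogonality preservers of rank-one idempotents -- in the version that additionally yields boundedness and bounded invertibility of the implementing operator -- now shows that either there is a bounded invertible linear or conjugate-linear $T:\X\to\X$ with $\phi(P)=TPT^{-1}$ for all $P\in\I_1(\X)$, or there is such a $T:\X'\to\X$ with $\phi(P)=TP'T^{-1}$ for all $P\in\I_1(\X)$. Composing $\phi$ with $X\mapsto T^{-1}XT$, respectively with its dual analogue, we may assume henceforth that $\phi(P)=P$ for every $P\in\I_1(\X)$, or else that $\phi(P)=P'$ for every $P\in\I_1(\X)$.

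It remains to bootstrap from $\I_1(\X)$ to all of $\bx$, which is the technical heart of the proof. In the first situation one shows, in stages and each time testing $\sim$ against the operators already identified, that $\phi$ fixes every idempotent (an idempotent is pinned down by its Jordan-idempotency relations with rank-one idempotents, which expose both its image and its kernel), then every scalar multiple of a rank-one idempotent, and then enough further operators -- for instance small-rank ones built from sums of these -- to determine the action of an arbitrary $X\in\bx$ on each vector and each functional, so that $\phi(X)=X$; undoing the reductions gives $\phi(X)=\lambda TXT^{-1}$ with $\lambda\in\{-1,1\}$. In the second situation the same scheme gives $\phi(X)=X'$ for all $X$, i.e.\ $\phi(X)=\lambda TX'T^{-1}$; this alternative can occur only when $\X$ is reflexive, for otherwise the adjoints $\{X':X\in\bx\}$ do not fill enough of $\B(\X')$ for the range of $\phi$ to contain all of $\mathcal{I}^\pm(\X)$ -- indeed a rank-one idempotent of $\B(\X')$ is an adjoint only if the corresponding functional on $\X'$ is weak-$\ast$ continuous, and requiring this of all of them forces $\X''=\X$. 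The principal obstacle is precisely this final bootstrap: since $\phi$ is a priori neither additive nor injective, and a nonzero Jordan product may be sent to a zero one or vice versa, each operator has to be reconstructed from the coarse data of which of its Jordan products with the already-controlled operators are idempotent, and the stages must be organised so that this data is sharp enough at each step; obtaining the intrinsic description of the rank-one (anti-)idempotents in terms of $\sim$ is a further, milder, difficulty.
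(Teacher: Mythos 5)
Your overall architecture is the same as the paper's (show $\phi(0)=0$, reduce to the restriction of $\phi$ to rank-one idempotents, invoke \v Semrl's description of orthogonality preservers of $\I_1(\X)$ -- the paper uses \cite[Theorem 2.4]{Semrl_commutativity} -- normalise so that $\phi$ fixes $\mathcal{I}_1^\pm(\X)$ pointwise, and then bootstrap), and your opening computations (the description of $\sim$ on $\mathcal{I}_1^\pm(\X)$ and the argument that $\phi(0)=0$) are correct. The first genuine gap is the assertion that $\mathcal{I}_1^\pm(\X)\cup\{0\}$, and the rank-one idempotents inside it, ``are characterised purely in terms of $\sim$'' in a way that transfers through $\phi$. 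No such characterisation is given, and the transfer is delicate: $\phi$ is not assumed surjective (its range is only known to contain $\mathcal{I}^\pm(\X)$) nor injective, so a property quantified over all of $\bx$ does not automatically pass to the image side, and your phrase ``restricts to a bijection of $\I_1^\pm(\X)$'' presupposes an injectivity you never establish. The paper gets injectivity (its Step 2) only via the heavy Lemma \ref{lem:operator}, and it identifies images of rank-one idempotents through a concrete chain of steps ($\phi(\pm I)=\pm I$, the pre-image-of-an-idempotent observation, $\phi(P-I)=\phi(P)-I$, Lemmas \ref{Lemma:PQ=0}, \ref{Lemma:-P} and \ref{Lemma:PQ}), not through an abstract $\sim$-characterisation.

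The more serious gap is the final bootstrap, which you describe only as fixing every idempotent, then every multiple of a rank-one idempotent, then ``enough further small-rank operators''. Testing an operator $A$ against multiples of rank-one idempotents only detects point-spectrum data: by Lemma \ref{f(x)neq0}, $A\circ x\otimes f\in\ixa$ with $f(x)=1/\lambda$ forces $Ax=\lambda x$ or $A'f=\lambda f$, and $A\circ x\otimes f=0$ forces $Ax=0$ and $A'f=0$. For a quasinilpotent operator whose point spectrum and that of its adjoint are empty (a Volterra-type operator, say), this entire test family yields no information, so for instance $A$ and $2A$ have identical patterns against it; hence the data you propose to use is not obviously sharp enough to ``determine the action of an arbitrary $X$ on each vector and each functional'', and you give no argument that it is. This is precisely where the paper's real work lies: it brings in rank-one nilpotents (Lemma \ref{f(x)=0} encodes the conditions $f(Ax)=2$, $f(A^2x)=0$), tripotents, and the Jordan-block test operators $\mathcal{T}(k;\lambda)$, proves via Steps 9--13 that $\phi$ fixes these, and then reconstructs $A$ through an algebraic/non-algebraic case split with a Sylvester-equation uniqueness argument (Proposition \ref{prop:1234}, Lemmas \ref{lem:Jk}, \ref{lem:Jn(lambda)}, \ref{lem:Jn(0)}, culminating in Lemma \ref{lem:operator}). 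None of these ideas appear in your sketch, so as it stands the proposal does not prove the theorem, even though its skeleton is the right one.
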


When $\X$ is a finite-dimensional complex vector space, then it will be regarded as ${\CC}^n$ for some $n\in\NN$, and its members are seen as column vectors. Linear endomorphisms of ${\CC}^n$ will be identified with $n\times n$ complex matrices, denoted by $M_n$.
By ${\mathcal I}_n$, ${\mathcal I}^*_n$, ${\mathcal I}_{n,1}$, $\mathcal{I}_n^\pm\subseteq M_n$ we denote the set of idempotent, the set of nonzero idempotent, the set of rank-one idempotent, and the union of rank-one idempotent and rank-one anti-idempotent matrices, respectively.

\begin{Theorem}\label{main_result_matrix}
Let $\phi:M_n\rightarrow M_n$, $n\geq3$, be a map whose range contains the set $\mathcal{I}_n^\pm$. Suppose that
$$A\circ B\in{\mathcal I}_{n}\quad\Longleftrightarrow\quad\phi(A)\circ\phi(B)\in{\mathcal I}_n,$$
for every $A,B\in M_n$. Then there exists a field automorphism $\sigma$ of $\CC$, a non-singular matrix $T\in M_n$ and $\lambda\in\{1,-1\}$ such that
$$
\phi([x_{ij}])=\lambda T [\sigma(x_{ij})]^\diamond T^{-1},\qquad\text{for every }[x_{ij}]\in M_n,
$$
where $\diamond$ denotes one of the maps on $M_n$: identity or transposition.
\end{Theorem}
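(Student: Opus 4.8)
The plan is to follow the architecture of the proof of Theorem~\ref{main_result} step by step, carrying out every part that is purely algebraic and replacing the two places where infinite-dimensionality intervenes by the corresponding finite-dimensional facts: the Fundamental Theorem of Projective Geometry over $\CC$, which now only forces \emph{semilinearity} and hence produces a field automorphism $\sigma$; and the remark that $\CC^n$ is automatically reflexive with dual $\CC^n$, so the ``adjoint'' alternative simply collapses to transposition. I would first record the reductions that leave the hypothesis invariant: composing $\phi$ with $X\mapsto T^{-1}XT$ for invertible $T$, with the entrywise field automorphism $X\mapsto[\sigma(x_{ij})]$ (which commutes with matrix multiplication and preserves idempotency), with $X\mapsto X^{t}$, and with $X\mapsto -X$ (because $(-A)\circ(-B)=A\circ B$), all yield maps with the same two-directional preserving property; this freedom is used to normalize $\phi$ at the end.

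Next I would analyze $\phi$ on $\mathcal I_n^\pm$. Since $P\circ P=P$ for $P\in\mathcal I_{n,1}$ and $Q\circ Q=-Q\in\mathcal I_n$ for a rank-one anti-idempotent $Q$, while a short case analysis (testing $A$ against suitable partners) shows that ``$A\circ B\in\mathcal I_n$ in the required range of configurations'' pins $A$ down to $\mathcal I_n^\pm$, one gets that $\phi$ maps $\mathcal I_n^\pm$ into itself; injectivity there and the reverse implication use the assumption that the range contains $\mathcal I_n^\pm$. The combinatorial heart is the description, for $P,Q\in\mathcal I_{n,1}$, of exactly when $P\circ Q\in\mathcal I_n$: this occurs precisely for a short list of geometric configurations of the pairs $(\im P,\ker P)$ and $(\im Q,\ker Q)$, the extreme cases being $P\perp Q$ (giving $P\circ Q=0$) and the configurations producing a \emph{nonzero} rank-one idempotent $P\circ Q$. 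One then checks that $\phi$ transports this list in both directions, and, using $(-P)\circ(-Q)=P\circ Q$, deduces that the induced bijection of $\mathcal I_{n,1}$ preserves orthogonality of rank-one idempotents in both directions (the sign ambiguity of landing in an idempotent versus an anti-idempotent being carried along).

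Applying the Fundamental Theorem of Projective Geometry to this bijection — viewed, through ranges and kernels, as an incidence-preserving bijection of the points of $\mathbb P(\CC^n)$ together with a compatible one of $\mathbb P((\CC^n)^{*})$, which is where $n\ge 3$ is essential — produces an invertible $T\in M_n$, a field automorphism $\sigma$ of $\CC$, and $\diamond\in\{\mathrm{id},{}^{t}\}$ with $\phi(P)=\eta(P)\,T[\sigma(p_{ij})]^\diamond T^{-1}$ for all $P=[p_{ij}]\in\mathcal I_{n,1}$, where $\eta:\mathcal I_{n,1}\to\{1,-1\}$ and the same $\diamond$ is forced for all $P$. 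That $\eta$ is constant, say equal to $\lambda$, follows by evaluating the preserving property on pairs $P,Q$ with $P\circ Q$ a nonzero idempotent: then $\phi(P)\circ\phi(Q)=\eta(P)\eta(Q)\,T(\sigma\text{-image of }P\circ Q)^\diamond T^{-1}$ is idempotent iff $\eta(P)\eta(Q)=1$, and the graph on $\mathcal I_{n,1}$ with these edges is connected for $n\ge 3$. Replacing $\phi$ by the normalized map $X\mapsto \lambda\,T^{-1}\phi\!\big([\sigma^{-1}(x_{ij})]^\diamond\big)T$ — still a two-directional preserver by the reductions above, and now fixing every rank-one idempotent — it remains to show this map is the identity on $M_n$. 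For an arbitrary $X$ one recovers $X$ from the family of relations ``$X\circ R\in\mathcal I_n$'' as $R$ ranges over rank-one idempotents and their orthogonal sums, first for rank-one $X$, then for higher rank; since $\phi$ fixes every such $R$, the elements $X$ and $\phi(X)$ satisfy identical relations and therefore coincide. Unwinding the normalization gives $\phi([x_{ij}])=\lambda\,T[\sigma(x_{ij})]^\diamond T^{-1}$.

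I expect the main obstacle to be the combinatorial core of the second step together with the constancy of $\eta$. Unlike for the ordinary product, $A\circ B=0$ is structurally weak, and the hypothesis explicitly permits a nonzero Jordan product to be sent to a zero one and conversely, so all rigidity must be extracted from the delicate list of rank-one configurations whose Jordan product is idempotent; one then has to verify that in every dimension $n\ge 3$ this list is simultaneously rich enough to trigger the projective-geometry conclusion and rigid enough to pin down the single alternative $\diamond$ and the global sign $\lambda$.
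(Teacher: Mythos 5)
Your overall architecture matches the paper's (reduce to the induced map on rank-one idempotents, show it preserves orthogonality in both directions, invoke the projective-geometry/\v Semrl machinery to get $T$, $\sigma$, $\diamond$, normalize so that all rank-one idempotents are fixed, then identify the normalized map with the identity), but the final identification step, which is where essentially all of the paper's work lies, is claimed with an argument that fails. You assert that an arbitrary $X\in M_n$ is recovered from the family of relations ``$X\circ R\in\mathcal I_n$'' as $R$ ranges over rank-one idempotents and their orthogonal sums. This is false: for $R$ idempotent, $\lambda R\in\mathcal I_n$ forces $\lambda\in\{0,1\}$, so for instance $X=2I$ and $X=3I$ satisfy exactly the same relations against every idempotent (and every anti-idempotent) $R$; idempotents simply cannot separate such operators. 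This is precisely why the paper does not stop at fixed rank-one idempotents: it must bootstrap, in {\sc Steps} 9--13, to $\phi(K)=K$ for tripotents (Corollary \ref{corr:D}), $\phi(N)=N$ for rank-one nilpotents, $\phi(\lambda(P-Q))=\lambda(P-Q)$ and $\phi(\lambda P)=\lambda P$ for all nonzero $\lambda$, and $\phi(T)=T$ for the scaled Jordan-block operators in $\mathcal T(k;\lambda)$, and only then apply the identification Lemma \ref{lem:operator}, whose test set $\mathcal T$ consists of nilpotents, tripotents and these Jordan-block operators and whose proof rests on Proposition \ref{prop:1234} and Lemmas \ref{lem:Jk}--\ref{lem:Jn(0)}. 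None of this (nor any substitute for it) appears in your outline, so the step ``the normalized map is the identity'' is unsupported.

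A related, smaller omission: you never explain how injectivity of $\phi$ (or even that preimages of idempotents under $\phi$ are idempotents) is obtained; in the paper this already requires the full identification Lemma \ref{lem:operator} applied to arbitrary operators, not just a configuration analysis on $\mathcal I_n^\pm$. Your treatment of the sign via a local function $\eta$ and a connectedness argument on pairs with $P\circ Q\in\mathcal I_n^\ast$ is plausible and is a legitimate variant of the paper's global handling through $\phi(\pm I)=\pm I$ and {\sc Step} 6, but it too needs the connectivity claim proved. As it stands, the proposal is an accurate high-level map of the easier half of the proof and leaves the genuinely hard half --- extending rigidity from idempotents to all of $M_n$ without any linearity --- unaddressed.
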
 

\begin{Remark}
By \cite{Kestelman} there exist many non-continuous automorphisms of the complex field. 
\end{Remark}

Applying Theorems \ref{main_result} and \ref{main_result_matrix}, we obtain the following characterizations as a simple consequence. Here, for a scalar $\alpha\in\mathbb C$  the notations $\alpha\mathcal{I}^\pm(\mathcal{X})$ and $\alpha\mathcal{I}_n^\pm$  refer to the sets of all operators of the form $A=\alpha P$, where $P\in\mathcal{I}^\pm(\X)$ or $P\in\mathcal{I}_n^\pm$, respectively.

\begin{Theorem}\label{main_result2}
Let $\mathcal{X}$ be an infinite-dimensional complex Banach space and let $\alpha\in\mathbb{C}$ be nonzero. Suppose that $\phi:{\bx}\rightarrow\bx$ is a map whose range contains the set $\sqrt{(2\alpha)^{-1}}\;\mathcal{I}^\pm(\mathcal{X})$ and that
$$\alpha(AB+BA)\in{\I(\X)}\quad\Longleftrightarrow\quad\alpha(\phi(A)\phi(B)+\phi(B)\phi(A))\in\I(\X),$$ 
for every $A,B\in\bx$. Then either there exists a bounded invertible linear or conjugate-linear operator $T:{\X}\rightarrow\X$ and $\lambda\in\left\{-1,1\right\}$ such that
$$
\phi(X)=\lambda TXT^{-1},\qquad\text{ for every }X\in{\mathcal B}({\X}),
$$
or, only if $\X$ is reflexive,  there exists a bounded invertible linear or conjugate-linear operator $T:{\X'}\rightarrow \X$ and $\lambda\in\left\{-1,1\right\}$  such that
$$
\phi(X)=\lambda TX'T^{-1},\qquad\text{ for every }X\in{\mathcal B}({\X}),
$$
where $X'$ denotes the adjoint operator of the operator $X$.
\end{Theorem}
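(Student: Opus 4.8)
The plan is to derive Theorem~\ref{main_result2} from Theorem~\ref{main_result} by a simple rescaling of the map $\phi$. The key observation is that the hypothesis of Theorem~\ref{main_result2} is phrased in terms of the product $\alpha(AB+BA)$, whereas Theorem~\ref{main_result} is phrased in terms of the Jordan product $A\circ B=\frac12(AB+BA)$. These differ only by the scalar factor $2\alpha$, so if we set $\psi = \sqrt{2\alpha}\,\phi$ (choosing a fixed square root of $2\alpha$), then for all $A,B\in\bx$ we have
$$
\psi(A)\circ\psi(B) = \tfrac12\bigl(\psi(A)\psi(B)+\psi(B)\psi(A)\bigr) = \tfrac12\,(2\alpha)\bigl(\phi(A)\phi(B)+\phi(B)\phi(A)\bigr) = \alpha\bigl(\phi(A)\phi(B)+\phi(B)\phi(A)\bigr),
$$
and in particular $\psi(A)\circ\psi(B)\in\I(\X)$ if and only if $\alpha(\phi(A)\phi(B)+\phi(B)\phi(A))\in\I(\X)$. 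Meanwhile, the right-hand side of the equivalence should be compared with $A\circ B$ directly — but here we must be careful: the hypothesis of Theorem~\ref{main_result2} relates $\alpha(AB+BA)$ to idempotency, not $A\circ B = \frac12(AB+BA)$. So the correct substitution is on both sides, and we should instead consider the map $\phi$ together with a rescaling so that the \emph{same} product appears on both sides.

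Concretely, I would argue as follows. Note that $\alpha(AB+BA) = (2\alpha)(A\circ B)$ is idempotent if and only if $A'\circ B'$ is idempotent, where $A' = \sqrt{2\alpha}\,A$ and $B' = \sqrt{2\alpha}\,B$; this uses only the identity $(2\alpha)(A\circ B) = (\sqrt{2\alpha}A)\circ(\sqrt{2\alpha}B)$. Define $\psi:\bx\to\bx$ by $\psi(X) = \sqrt{2\alpha}\,\phi\bigl((2\alpha)^{-1/2}X\bigr)$. Then a direct check shows that $\psi(X)\circ\psi(Y) = (2\alpha)\bigl(\phi(X')\circ\phi(Y')\bigr)\cdot\frac{1}{2\alpha}\cdot 2\alpha$... more cleanly: with $X = \sqrt{2\alpha}A$, $Y=\sqrt{2\alpha}B$ we get $\psi(X)\circ\psi(Y) = 2\alpha\,\phi(A)\circ\phi(B) = \alpha(\phi(A)\phi(B)+\phi(B)\phi(A))$, which is idempotent iff $\alpha(AB+BA)$ is idempotent iff $\sqrt{2\alpha}A\circ\sqrt{2\alpha}B = X\circ Y$ is idempotent. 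Hence $\psi$ satisfies the hypothesis of Theorem~\ref{main_result}. One also checks that the range of $\psi$ contains $\mathcal{I}^\pm(\X)$: indeed $\psi(\bx) = \sqrt{2\alpha}\,\phi(\bx) \supseteq \sqrt{2\alpha}\cdot\sqrt{(2\alpha)^{-1}}\,\mathcal{I}^\pm(\X) = \mathcal{I}^\pm(\X)$, which is exactly why the range assumption in Theorem~\ref{main_result2} was stated with the factor $\sqrt{(2\alpha)^{-1}}$.

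Applying Theorem~\ref{main_result} to $\psi$ yields either $\psi(X) = \lambda TXT^{-1}$ for all $X$, or (when $\X$ is reflexive) $\psi(X)=\lambda TX'T^{-1}$ for all $X$, with $T$ bounded invertible linear or conjugate-linear and $\lambda\in\{-1,1\}$. Unwinding the definition of $\psi$, in the first case $\phi(Y) = (2\alpha)^{-1/2}\psi(\sqrt{2\alpha}\,Y) = (2\alpha)^{-1/2}\lambda T(\sqrt{2\alpha}\,Y)T^{-1}$. Here I need to distinguish the linear and conjugate-linear cases for $T$: if $T$ is linear the scalar $\sqrt{2\alpha}$ passes through $T$ and cancels the $(2\alpha)^{-1/2}$, giving $\phi(Y)=\lambda TYT^{-1}$; if $T$ is conjugate-linear it produces $(2\alpha)^{-1/2}\overline{\sqrt{2\alpha}}$ which is a unimodular constant $c$, and one absorbs $c$ into $T$ by replacing $T$ with $c^{1/2}T$ (still conjugate-linear invertible) — or more simply observes $c\cdot T X T^{-1} = (c^{1/2}T)X(c^{1/2}T)^{-1}$ wait that is not quite right for conjugate-linear $T$; instead replace $T$ by $\mu T$ for a suitable scalar $\mu$ with $|\mu|$ chosen so that $\mu\bar\mu^{-1} \cdot$ adjusts correctly, or absorb $c$ directly noting $cTXT^{-1} = (\nu T) X (\nu T)^{-1}$ fails, so one keeps $\phi(Y) = \lambda c\,TYT^{-1}$ and observes $\lambda c$ has modulus one but need not be $\pm1$ — this is the point I expect to require the most care. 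The resolution: for conjugate-linear $T$, write $c = |c| e^{i\theta} = e^{i\theta}$ and set $\widetilde T = e^{i\theta/2}T$, which is still conjugate-linear and invertible, and compute $\widetilde T Y \widetilde T^{-1} = e^{i\theta/2}T Y (e^{i\theta/2}T)^{-1} = e^{i\theta/2}\overline{e^{-i\theta/2}}\,TYT^{-1} = e^{i\theta}TYT^{-1} = c\,TYT^{-1}$, so $\phi(Y) = \lambda\widetilde T Y\widetilde T^{-1}$ as desired. The second (adjoint) case is handled identically, and the reflexivity restriction carries over verbatim. Thus the only genuine obstacle is the bookkeeping of scalar factors in the conjugate-linear case, which the above trick resolves; everything else is a formal substitution.
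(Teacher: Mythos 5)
Your reduction is exactly the paper's: the authors also pass to $\psi(X)=\sqrt{2\alpha}\,\phi\bigl((2\alpha)^{-1/2}X\bigr)$, observe that $\psi$ satisfies the hypotheses of Theorem~\ref{main_result}, and unwind; your verification of the preserver property and of the range condition for $\psi$ is correct, so up to and including the application of Theorem~\ref{main_result} there is nothing to object to.

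The gap is in the final step, precisely where you yourself anticipated trouble. The scalar $c=\overline{\sqrt{2\alpha}}/\sqrt{2\alpha}$ arising in the conjugate-linear case cannot be absorbed into $T$: your computation $(e^{i\theta/2}T)\,Y\,(e^{i\theta/2}T)^{-1}=e^{i\theta}TYT^{-1}$ is false. For conjugate-linear $T$ and $\mu\neq0$ one has $(\mu T)^{-1}=\bar\mu^{-1}T^{-1}$, and the scalar $\bar\mu^{-1}$ passes through the outer conjugate-linear factor as $\mu^{-1}$, so $(\mu T)Y(\mu T)^{-1}=\mu\mu^{-1}\,TYT^{-1}=TYT^{-1}$; conjugation by $T$ is invariant under rescaling of $T$ (for linear $T$ as well), so no choice of $\mu$ produces the factor $c$. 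Nor is this a removable bookkeeping issue: if $\alpha\notin\RR$ then $c^2=\bar\alpha/\alpha\neq1$, and the map $\phi(X)=\lambda c\,TXT^{-1}$ with conjugate-linear $T$ genuinely satisfies the hypothesis of Theorem~\ref{main_result2}, since $\alpha\bigl(\phi(A)\phi(B)+\phi(B)\phi(A)\bigr)=c^{2}(\alpha/\bar\alpha)\,T\bigl(\alpha(AB+BA)\bigr)T^{-1}=T\bigl(\alpha(AB+BA)\bigr)T^{-1}$, while it is not of the form $\pm SXS^{-1}$ for any linear or conjugate-linear $S$ (evaluating at $X=I$ shows the constant is an invariant of the map, and a linear $S$ is excluded by comparing linearity types in $X$); conversely, $\lambda TXT^{-1}$ with conjugate-linear $T$ and $\lambda\in\{-1,1\}$ fails the hypothesis for non-real $\alpha$. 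So in the conjugate-linear branches (including the adjoint one) the correct conclusion carries a unimodular constant $c$ with $c^{2}=\bar\alpha/\alpha$ --- exactly the constant $c_\alpha$, $c_\alpha^{2}=\sigma(\alpha)/\alpha$, that the paper records in the matrix version, Theorem~\ref{main_result2_matrix}. Your claimed form with $\lambda\in\{-1,1\}$ is accurate only when $\alpha$ is real, in which case $c=\pm1$ automatically; for general nonzero $\alpha$ the fix is not to absorb the constant but to state it.
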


\begin{Theorem}\label{main_result2_matrix}
Let $\alpha\in\CC$ be nonzero, and let $\phi:M_n\rightarrow M_n$, $n\geq3$, be a map whose range contains $\sqrt{(2\alpha)^{-1}}\;\mathcal{I}_n^\pm$.  Suppose that
$$\alpha(AB+BA)\in{\I_n}\quad\Longleftrightarrow\quad\alpha(\phi(A)\phi(B)+\phi(B)\phi(A))\in\I_n,$$ 
for every $A,B\in M_n$. Then there exists a field automorphism $\sigma$ of $\CC$, a non-singular matrix $T\in M_n$ and $c_\alpha$, $c_\alpha^2=\sigma(\alpha)/\alpha$, such that
	$$
	\phi([x_{ij}])=c_\alpha T [\sigma(x_{ij})]^\diamond T^{-1},\qquad\text{for every }[x_{ij}]\in M_n,
	$$ 
where $\diamond$ denotes one of the maps on $M_n$: identity or transposition.
\end{Theorem}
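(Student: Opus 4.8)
The plan is to derive Theorem \ref{main_result2_matrix} from Theorem \ref{main_result_matrix} by a simple rescaling trick. The key observation is that for any nonzero $\alpha\in\CC$, the condition ``$\alpha(AB+BA)$ is idempotent'' can be rewritten in terms of the Jordan product $\circ$ applied to rescaled operators. Indeed, if we set $\beta=\sqrt{(2\alpha)^{-1}}$ (fixing one square root), then for $\tilde A=\beta^{-1}A$ and $\tilde B=\beta^{-1}B$ we have $\tilde A\circ\tilde B=\frac12(\tilde A\tilde B+\tilde B\tilde A)=\frac{1}{2\beta^2}(AB+BA)=\alpha(AB+BA)$. Hence the map $\psi:M_n\to M_n$ defined by $\psi(X)=\beta^{-1}\phi(\beta X)$ satisfies
$$X\circ Y\in\I_n\iff \alpha(\beta X)(\beta Y)+\alpha(\beta Y)(\beta X)\in\I_n\iff \psi(X)\circ\psi(Y)\in\I_n,$$
where the last equivalence uses the hypothesis on $\phi$ together with the identity $\psi(X)\circ\psi(Y)=\beta^{-2}(\phi(\beta X)\phi(\beta Y)+\phi(\beta Y)\phi(\beta X))\cdot\frac12=\alpha(\phi(\beta X)\phi(\beta Y)+\phi(\beta Y)\phi(\beta X))$. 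One must also check the range condition: since the range of $\phi$ contains $\beta\,\mathcal{I}_n^\pm$, the range of $\psi$ contains $\beta^{-1}\cdot\beta\,\mathcal{I}_n^\pm=\mathcal{I}_n^\pm$, so $\psi$ satisfies the hypotheses of Theorem \ref{main_result_matrix}.

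Next I would apply Theorem \ref{main_result_matrix} to $\psi$: there exist a field automorphism $\sigma$ of $\CC$, a non-singular $T\in M_n$, and $\lambda\in\{1,-1\}$ such that $\psi([x_{ij}])=\lambda T[\sigma(x_{ij})]^\diamond T^{-1}$ for all $[x_{ij}]\in M_n$, where $\diamond$ is the identity or transposition. Unwinding the definition of $\psi$, we get $\phi(\beta X)=\beta\psi(X)=\lambda\beta T[\sigma(x_{ij})]^\diamond T^{-1}$. Writing $Y=\beta X$, so that $X=\beta^{-1}Y$ and the $(i,j)$ entry of $X$ is $\beta^{-1}y_{ij}$, we obtain
$$\phi([y_{ij}])=\lambda\beta T\big[\sigma(\beta^{-1}y_{ij})\big]^\diamond T^{-1}=\lambda\beta\,\sigma(\beta^{-1})\,T\big[\sigma(y_{ij})\big]^\diamond T^{-1},$$
using multiplicativity of $\sigma$ (and noting that $\diamond$ commutes with scalar multiplication). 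So $\phi([y_{ij}])=c_\alpha T[\sigma(y_{ij})]^\diamond T^{-1}$ with $c_\alpha=\lambda\beta\sigma(\beta^{-1})=\lambda\beta/\sigma(\beta)$.

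Finally I would verify the claimed relation $c_\alpha^2=\sigma(\alpha)/\alpha$. We have $c_\alpha^2=\lambda^2\beta^2/\sigma(\beta)^2=\beta^2/\sigma(\beta^2)$ since $\lambda^2=1$ and $\sigma$ is multiplicative. Now $\beta^2=(2\alpha)^{-1}$, hence $\sigma(\beta^2)=\sigma((2\alpha)^{-1})=\sigma(2\alpha)^{-1}=(\sigma(2)\sigma(\alpha))^{-1}=(2\sigma(\alpha))^{-1}$, where $\sigma(2)=2$ because a field automorphism of $\CC$ fixes the prime field $\mathbb Q$. Therefore $c_\alpha^2=(2\alpha)^{-1}/(2\sigma(\alpha))^{-1}=\sigma(\alpha)/\alpha$, as required. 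Conversely, one should note the hypothesis on $\phi$ is indeed equivalent to the displayed one on $\psi$, so no information is lost; the argument is reversible. I do not anticipate a genuine obstacle here — the only points demanding a little care are the bookkeeping of square roots (the ambiguity in choosing $\beta$ is absorbed into the sign freedom already present in Theorem \ref{main_result_matrix}) and the fact that $\sigma$ fixes $2$, which is what pins down $c_\alpha$ up to the relation stated rather than up to an arbitrary sign.
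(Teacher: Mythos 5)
Your proposal is correct and follows essentially the same route as the paper: rescale by $\sqrt{2\alpha}$ to obtain $\psi(X)=\sqrt{2\alpha}\,\phi\bigl((2\alpha)^{-1/2}X\bigr)$, check that $\psi$ satisfies the hypotheses of Theorem~\ref{main_result_matrix}, and unwind to get $c_\alpha$. Your bookkeeping (the range condition, $\sigma(2)=2$, and the verification $c_\alpha^2=\sigma(\alpha)/\alpha$ with the sign $\lambda$ absorbed into $c_\alpha$) is in fact spelled out more explicitly than in the paper's very terse proof.
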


In the next section, we establish some preliminary results, followed by the proofs of the above theorems in the subsequent section.

%%%%%%%%%%%%%%%%%%%%%%%%%%%%%%%%%%%
%%%%%%%%%%%%%%%%%%%%%%%%%%%%%%%%%%%
\section{Preliminaries}

Recall that every rank-one operator on $\X$ can be written as $x\otimes f$ for some nonzero vector $x\in\X$ and some nonzero functional $f\in\X'$. It is defined by $(x\otimes f)z=f(z) x$ for every $z\in\X$, and it holds $A(x\otimes f)=Ax\otimes f$ and $(x\otimes f)A=x\otimes A'f$ for every $A\in{\mathcal{B}\left(X\right)}$, where $A'$ stands for the adjoint operator of $A$.  The rank-one operator $x\otimes f$ is idempotent (anti-idempotent) if and only if $f(x)=1$ ($f(x)=-1$, resp.). Operator $x\otimes f$ is nilpotent if and only if $f(x)=0$. It is  clear that  $\lambda x\otimes f=x\otimes \lambda f$ for every scalar $\lambda$. \\

First, we will introduce some elementary results that will be the cornerstone of the subsequent investigation. Let us start with two simple observations stated without proofs. 

\begin{Observation}\label{Observation}
Let $A$ and $X\in \bx$ be nonzero operators. Then the following statements hold true.
\begin{enumerate}
    \item[(i)]  $A\circ X \in \I(\X)$ and $-A\circ X \notin \I(\X)$ if and only if $A\circ X \in \I^\ast(\X)$.
    \item[(ii)] $A\circ X \in \I(\X)$ and $-A\circ X \in \I(\X)$ if and only if $A\circ X =0$.
\end{enumerate}
\end{Observation}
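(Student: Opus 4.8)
The plan is to reduce both items to the single elementary fact that, since $\CC$ has characteristic different from $2$, the zero operator is the only element of $\bx$ that is at the same time idempotent and anti-idempotent. I would write $Y=A\circ X\in\bx$; although $A$ and $X$ are assumed nonzero, $Y$ itself need not be, and separating the two possibilities $Y=0$ and $Y\neq0$ is exactly what the Observation records. Throughout I would invoke the standing conventions that $0\in\I(\X)$ (indeed $0^2=0$), whereas $0$ is excluded from $\I^\ast(\X)$ and is not an anti-idempotent.

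For (ii), the argument runs as follows: if $Y\in\I(\X)$ and $-Y\in\I(\X)$, then $Y^2=Y$ from the first membership and $(-Y)^2=-Y$, i.e. $Y^2=-Y$, from the second; subtracting gives $2Y=0$, hence $Y=0$, and conversely $Y=0$ makes both $Y$ and $-Y$ equal to the idempotent $0$. For (i), if $Y\in\I(\X)$ and $-Y\notin\I(\X)$, then $Y\neq0$ (otherwise $-Y=0\in\I(\X)$, a contradiction), so $Y$ is a nonzero idempotent, i.e. $Y\in\I^\ast(\X)$; conversely, if $Y\in\I^\ast(\X)$ then $Y\in\I(\X)$ and $Y\neq0$, so by the contrapositive of (ii) one cannot simultaneously have $-Y\in\I(\X)$, giving $-Y\notin\I(\X)$.

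There is essentially no real obstacle here: the entire content is the invertibility of $2$ in $\CC$ together with the bookkeeping that $0$ counts as an idempotent but not as an anti-idempotent. The statement is worth isolating only because these two equivalences will be applied repeatedly to read off, from idempotency information about a Jordan product $A\circ X$, whether that product is zero, a genuine nonzero idempotent, or an anti-idempotent — the trichotomy on which the later arguments rest.
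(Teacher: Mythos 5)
Your proof is correct, and since the paper deliberately states this Observation without proof (it is listed among the ``simple observations stated without proofs''), your elementary argument --- reducing everything to $Y^2=Y$ and $Y^2=-Y$ forcing $2Y=0$, plus the convention that $0\in\I(\X)$ but $0\notin\I^\ast(\X)$ --- is exactly the intended one. Nothing further is needed.
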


We now proceed with several auxiliary lemmas that will be instrumental in the sequel.

%%%%%%%%%%%%%%%%%%% Jordan product between A in rank-one operator
\begin{Lemma}\cite[Lemma 2.4]{Fang}\label{f(x)neq0}
Let $A$ and $x\otimes f\in\B(\X)$ be nonzero operators. If $f(x)\neq0$, then $A\circ x\otimes f$ is a nonzero idempotent if and only if $Ax=\frac{1}{f(x)}x$ or $A'f=\frac{1}{f(x)}f$.
\end{Lemma}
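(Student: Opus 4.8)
The plan is to reduce everything to the algebra of rank-one operators. Using the identities $A(x\otimes f)=Ax\otimes f$ and $(x\otimes f)A=x\otimes A'f$, I would first write
$P:=A\circ(x\otimes f)=\tfrac12\bigl(Ax\otimes f+x\otimes A'f\bigr)$,
which is a sum of two rank-one operators, hence $\rk P\le 2$. The elementary fact to record is that a sum $u_1\otimes g_1+u_2\otimes g_2$ of rank-one operators has rank at most one exactly when $\{u_1,u_2\}$ or $\{g_1,g_2\}$ is linearly dependent; if both pairs are independent, the range is two-dimensional.

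For the forward implication, assume $P$ is a nonzero idempotent. The first step is to exclude $\rk P=2$. If $\rk P=2$, then $\{Ax,x\}$ is independent and $\im P=\s\{Ax,x\}$; since an idempotent fixes every vector of its range and $x\in\im P$, we must have $Px=x$. But $Px=\tfrac12 f(x)\,Ax+\tfrac12 (A'f)(x)\,x$, so independence of $\{Ax,x\}$ forces $f(x)=0$, contrary to hypothesis. Hence $\rk P=1$, and by the fact above $\{Ax,x\}$ or $\{f,A'f\}$ is linearly dependent.

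In the case $Ax=\mu x$ (legitimate since $x\neq0$) one gets $P=x\otimes\tfrac12(\mu f+A'f)$; evaluating the second factor at $x$ and using $(A'f)(x)=f(Ax)=\mu f(x)$ shows that idempotency (i.e.\ $P=x\otimes\psi$ with $\psi(x)=1$) is equivalent to $\mu f(x)=1$, that is, $Ax=\tfrac1{f(x)}x$. Symmetrically, in the case $A'f=\nu f$ one gets $P=\tfrac12(Ax+\nu x)\otimes f$, and applying $f$ to the left vector, together with $f(Ax)=(A'f)(x)=\nu f(x)$, shows idempotency is equivalent to $\nu f(x)=1$, that is, $A'f=\tfrac1{f(x)}f$. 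These two computations are reversible, so substituting either hypothesis back into $P=\tfrac12(Ax\otimes f+x\otimes A'f)$ immediately collapses $P$ to a single rank-one operator $w\otimes\psi$ with $\psi(w)=1$, which settles the converse.

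The only step that is not pure bookkeeping is the exclusion of $\rk P=2$: $A\circ(x\otimes f)$ genuinely can have rank $2$ when $f(x)\neq0$ if one drops the idempotency assumption, so this is precisely where that hypothesis must be spent, and the efficient way to spend it is the property that an idempotent acts as the identity on its range.
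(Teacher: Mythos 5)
Your proof is correct. One caveat on the comparison: the paper does not prove this statement at all --- it is imported verbatim from \cite[Lemma 2.4]{Fang} --- so the closest internal analogue is Lemma \ref{f(x)=0}, which is handled by expanding the idempotency identity and comparing coefficients of rank-one terms. Your route is different and self-contained: you bound $\rk\bigl(A\circ x\otimes f\bigr)$ by $2$, exclude rank two via the observation that an idempotent fixes its range (so $Px=x$ would force $f(x)=0$, against the hypothesis), and then reduce to the two rank-one cases $Ax=\mu x$ and $A'f=\nu f$, in which nonzero idempotency becomes $\mu f(x)=1$, respectively $\nu f(x)=1$; since these computations are equivalences, substituting $\mu=\tfrac{1}{f(x)}$ or $\nu=\tfrac{1}{f(x)}$ back in settles the converse. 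All steps check out, including the tacit use that two linearly independent bounded functionals admit vectors separating them (needed for the ``both pairs independent $\Rightarrow$ rank two'' half of your elementary fact), and the degenerate possibilities $Ax=0$ or $A'f=0$ are harmlessly absorbed into the dependent cases. The rank/range argument is arguably cleaner than a direct coefficient comparison and, as you note, spends the hypothesis $f(x)\neq0$ exactly at the rank-two exclusion.
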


\begin{Lemma}\label{f(x)neq0_1}
Let $A$ and $x\otimes f\in\B(\X)$ be nonzero operators. If $f(x)\neq0$, then $A\circ x\otimes f=0$ if and only if $Ax=0$ and $A'f=0$.
\end{Lemma}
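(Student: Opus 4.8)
The plan is to unwind the definition of the Jordan product $\circ$ and reduce the statement to an elementary computation with a sum of two rank-one operators. Using the identities $A(x\otimes f)=(Ax)\otimes f$ and $(x\otimes f)A=x\otimes (A'f)$ recorded in the preliminaries, one has
$$
A\circ (x\otimes f)=\tfrac12\bigl((Ax)\otimes f+x\otimes (A'f)\bigr).
$$
The implication ``$\Leftarrow$'' is then immediate: if $Ax=0$ and $A'f=0$, both summands vanish, so $A\circ (x\otimes f)=0$.

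For the nontrivial direction ``$\Rightarrow$'', I would start from $(Ax)\otimes f+x\otimes (A'f)=0$, which evaluated at an arbitrary $z\in\X$ reads $f(z)\,Ax=-(A'f)(z)\,x$. Since $x\otimes f\neq0$ forces $f\neq0$, I can pick $z_0$ with $f(z_0)\neq0$ and divide to get $Ax=\mu x$ with $\mu=-(A'f)(z_0)/f(z_0)$; substituting this back into the displayed relation and using $x\neq0$ yields $(A'f)(z)=-\mu f(z)$ for every $z$, i.e.\ $A'f=-\mu f$. So the hypothesis forces $x$ to be an eigenvector of $A$ and $f$ an eigenvector of $A'$, with opposite eigenvalues. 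Finally I would use the standing assumption $f(x)\neq0$: computing $f(Ax)$ two ways, on one hand $f(Ax)=\mu f(x)$ and on the other $f(Ax)=(A'f)(x)=-\mu f(x)$, whence $2\mu f(x)=0$ and therefore $\mu=0$. This gives $Ax=0$ and $A'f=0$, as claimed.

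The only point requiring a little care is the extraction of the scalar $\mu$ in the first step of ``$\Rightarrow$'': one should not split into the cases ``$Ax,x$ linearly dependent'' versus ``independent'', but instead read off $\mu$ directly by testing the identity $f(z)\,Ax=-(A'f)(z)\,x$ against a vector on which $f$ does not vanish. Everything else is routine, and the overall argument runs in close parallel to the proof of Lemma~\ref{f(x)neq0}, with the role of the eigenvalue $1/f(x)$ there played here by the eigenvalue $\mu$, which the condition $f(x)\neq0$ now forces to be $0$.
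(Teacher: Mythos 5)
Your proof is correct and follows essentially the same route as the paper: the paper also reduces to $Ax=\lambda x$, $A'f=-\lambda f$ (stated without the explicit rank-one computation you spell out) and then evaluates $f(Ax)$ in two ways to force $\lambda=0$ from $f(x)\neq 0$. Your extra detail in extracting the scalar $\mu$ is fine but not a different argument.
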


\begin{proof}
Let us prove only the nontrivial part. Suppose that $A\circ x\otimes f=0$ for some $x\in\X$ and $f\in\X'$ with $f(x)\neq0$. Then, $Ax=\lambda x$ and $A'f=-\lambda f$ for some scalar $\lambda$. As $f(x)\neq0$, from $f(Ax)=\lambda f(x)$ and $f(Ax)=-\lambda f(x)$ it follows that $\lambda=0$.
\qed\end{proof}

\begin{Lemma}\label{f(x)=0}
Let $A\in\B(\X)$ and $x\otimes f\in\B(\X)$ be nonzero operators. If $f(x)=0$, then $A\circ x\otimes f$ is a nonzero idempotent if and only if $f(Ax)=2$ and $f(A^2x)=0$.
\end{Lemma}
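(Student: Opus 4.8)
The plan is to compute explicitly the square of the rank-at-most-two operator $P:=A\circ x\otimes f$ and read off the idempotency condition from it. Using the identities $A(x\otimes f)=Ax\otimes f$ and $(x\otimes f)A=x\otimes A'f$ recalled above, one has $P=\tfrac12\bigl(Ax\otimes f+x\otimes A'f\bigr)$. Multiplying this out by means of the rank-one rule $(u\otimes g)(v\otimes h)=g(v)\,u\otimes h$ and substituting the two relations that are available under our hypothesis, namely $f(x)=0$ and $(A'f)(x)=f(Ax)$, the cross terms collapse and I expect to obtain
$$
P^2=\tfrac12 f(Ax)\,P+\tfrac14 f(A^2x)\,(x\otimes f).
$$

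For the \emph{if} direction, if $f(Ax)=2$ and $f(A^2x)=0$ then this identity immediately yields $P^2=P$; and evaluating $P$ at $x$, where the summand $Ax\otimes f$ vanishes because $f(x)=0$, gives $Px=\tfrac12 f(Ax)\,x=x\neq 0$ (note $x\neq 0$ and $f\neq 0$ since $x\otimes f\neq 0$). Hence $P$ is a nonzero idempotent.

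For the \emph{only if} direction, assume $P$ is a nonzero idempotent. Plugging the displayed formula and the expression for $P$ into $P^2=P$ and clearing denominators turns the equation into
$$
\bigl(f(Ax)-2\bigr)\bigl(Ax\otimes f+x\otimes A'f\bigr)+f(A^2x)\,(x\otimes f)=0.
$$
I would then split into two cases according to whether the vectors $Ax$ and $x$ are linearly independent. If they are independent, evaluating the left-hand side at a vector $z$ with $f(z)\neq 0$ (which exists since $f\neq 0$) and comparing the coefficients of $Ax$ and of $x$ forces first $f(Ax)=2$ and then $f(A^2x)=0$, as claimed. If instead $Ax=\mu x$ for some scalar $\mu$, then $f(Ax)=\mu f(x)=0$, so $P=\tfrac12\,x\otimes(\mu f+A'f)$ has rank at most one, and since $(\mu f+A'f)(x)=\mu f(x)+f(Ax)=0$ one gets $P^2=0$; together with $P^2=P$ this forces $P=0$, contradicting that $P$ is nonzero. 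So only the first case occurs, which completes the argument.

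There is no serious obstacle here: the whole proof is a short bilinear computation. The only point requiring a little care is the case distinction on the linear (in)dependence of $Ax$ and $x$ in the \emph{only if} part, and it is precisely there that the \emph{nonzero} idempotency hypothesis is used, in order to discard the degenerate (in fact nilpotent) possibility.
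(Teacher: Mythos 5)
Your proof is correct and follows essentially the same route as the paper: expand $(A\circ x\otimes f)^2$ using the rank-one multiplication rules, impose $f(x)=0$, and split on whether $x$ and $Ax$ are linearly independent, with the dependent case ruled out because it would force $A\circ x\otimes f=0$ (your nilpotency argument and the paper's direct computation amount to the same thing). The only cosmetic difference is that you explicitly verify $P\neq0$ in the ``if'' direction, which the paper leaves as obvious.
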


\begin{proof}
Again, the `if' statement is obvious, so let us prove the `only if'. Direct calculation of $A\circ x\otimes f\in\I(\X)$ gives
$$
2Ax\otimes f+2x\otimes A'f=f(Ax)Ax\otimes f+f(x)Ax\otimes A'f+f(A^2x)x\otimes f+f(Ax)x\otimes A'f,
$$
which is the same as
$$
Ax\otimes\left(\left(2-f(Ax)\right)f-f(x)A'f\right)+x\otimes\left(\left(2-f(Ax)\right)A'f-f(A^2x)f\right)=0.
$$ 
Then apply $f(x)=0$ to see that
\begin{equation}\label{prop:f(x)=0}
    \left(2-f(Ax)\right)Ax\otimes f+x\otimes\left((2-f(Ax))A'f-f(A^2x)f\right)=0.
\end{equation}
If $x$ and $Ax$ are linearly independent, we have $2-f(Ax)=0$ and $f(A^2x)=0$. If this is not the case, $Ax=\lambda x$ for some scalar $\lambda$ and so, $f(Ax)=0$ and $f(A^2x)=0$. But then we can rewrite the identity (\ref{prop:f(x)=0}) to $x\otimes(2\lambda f+2A'f)=0$.  This gives  $A\circ(x\otimes f)=0$, a contradiction.
\qed\end{proof}

\begin{Lemma}\label{lem:lambda_x}
Let $A\in\bx$ be a nonzero operator. Suppose that for some $\lambda \in \CC\backslash\{0\}$ and some nonzero $x\in\X$ we have $A\circ x\otimes f \in \I^\ast(\X)$, for all $f\in \X'$ with $f(x)=\frac{1}{\lambda}$. Then $Ax=\lambda x$.
\end{Lemma}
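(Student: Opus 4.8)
The plan is to reduce everything to the dichotomy supplied by Lemma~\ref{f(x)neq0}. Fix any $f\in\X'$ with $f(x)=\tfrac1\lambda$; since $\tfrac1\lambda\neq0$ we may apply Lemma~\ref{f(x)neq0} to the pair $A$, $x\otimes f$, and the hypothesis $A\circ x\otimes f\in\ixa$ then forces $Ax=\lambda x$ or $A'f=\lambda f$. Thus for \emph{every} $f$ in the affine hyperplane $H=\{f\in\X':f(x)=\tfrac1\lambda\}$ at least one of these two alternatives holds, and the goal is to show that the first one cannot fail.

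I would argue by contradiction: suppose $Ax\neq\lambda x$. Then the first alternative never occurs, so $A'f=\lambda f$ for every $f\in H$. Pick one $f_0\in H$. For an arbitrary $g$ in the annihilator $x^{\perp}=\{g\in\X':g(x)=0\}$ the functional $f_0+g$ again lies in $H$, hence $A'(f_0+g)=\lambda(f_0+g)$; subtracting $A'f_0=\lambda f_0$ gives $A'g=\lambda g$. So $A'$ acts as $\lambda\,\mathrm{id}$ on the codimension-one subspace $x^{\perp}$, and also on the vector $f_0\notin x^{\perp}$.

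Since $\X'=x^{\perp}\oplus\CC f_0$ (because $f_0(x)\neq0$), we conclude $A'=\lambda I$ on $\X'$, and therefore $A=\lambda I$ on $\X$, as the adjoint map $X\mapsto X'$ on $\bx$ is injective. In particular $Ax=\lambda x$, contradicting the assumption, so $Ax=\lambda x$ as claimed. I do not expect any real obstacle here; the only point needing a line of care is the passage from the affine statement ``$A'f=\lambda f$ on $H$'' to the linear statement ``$A'=\lambda I$'', which is exactly the translation-and-span argument above.
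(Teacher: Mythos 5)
Your argument is correct and follows essentially the same route as the paper: both apply Lemma~\ref{f(x)neq0}, assume $Ax\neq\lambda x$, and use the translation trick $f\mapsto f+g$ with $g(x)=0$ to force $A'g=\lambda g$. The only difference is cosmetic at the end: the paper evaluates at $x$ to get $Ax=\mu x$ and compares $\mu$ with $\lambda$, while you push on to $A'=\lambda I$ and hence $A=\lambda I$; both yield the same contradiction.
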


\begin{proof}
Assuming $A\circ x\otimes f\in \I^\ast(\X)$ and applying Lemma \ref{f(x)neq0} gives that $Ax=\lambda x$ or $A'f=\lambda f$. If $Ax\neq \lambda x$, then $A'f=\lambda f$ for every $f$ with $f(x)=\frac1\lambda$. Replacing $f$ by $f+g$, where $g\in \X'$ satisfies $g(x)=0$, one obtains that $g(Ax)=0$ yielding that $x$ and $Ax$ are linearly dependent, say $Ax=\mu x$ for some $\mu \neq \lambda$. From $A'f=\lambda f$ it follows that $\mu = \lambda$, a contradiction. Therefore, $Ax=\lambda x$.
\qed\end{proof}

%%%%%%%%%%%%%%%%%% Characterization of 0 and I
We continue with some characterizations of trivial idempotent operators in terms of the idempotency of the Jordan product $\circ$.

\begin{Lemma}\label{Lemma:A=0}
Let $A\in\B(\X)$. Then the following statements are equivalent.
\begin{enumerate}
    \item[(i)] $A=0$.
    \item[(ii)] $A\circ X\in\I(\X)$ for every $X\in\B(\X)$.
    \item[(iii)] $A\circ R\in\I(\X)$ for every $R\in\I_1(\X)\cup (-\I_1(\X))$.
\end{enumerate}
\end{Lemma}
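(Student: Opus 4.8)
The plan is to establish the cycle (i) $\Rightarrow$ (ii) $\Rightarrow$ (iii) $\Rightarrow$ (i). Both of the first two implications are immediate and carry no content: if $A=0$ then $A\circ X=0\in\I(\X)$ for every $X\in\B(\X)$, which is (ii); and (iii) is merely the restriction of (ii) to the particular bounded operators lying in $\I_1(\X)\cup(-\I_1(\X))$. So the whole substance of the lemma is the implication (iii) $\Rightarrow$ (i).

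For (iii) $\Rightarrow$ (i) I would first record the key structural remark that the test set $\I_1(\X)\cup(-\I_1(\X))$ is invariant under multiplication by $-1$: if $R$ is a rank-one idempotent, then $-R$ is a rank-one anti-idempotent and hence also a member of this set. Consequently, granting (iii), for every rank-one idempotent $R$ we have simultaneously $A\circ R\in\I(\X)$ and $A\circ(-R)=-(A\circ R)\in\I(\X)$. Assuming $A\neq 0$ (the case $A=0$ being exactly the conclusion we want), and noting $R\neq 0$, Observation \ref{Observation}(ii) applies and forces $A\circ R=0$ for every rank-one idempotent $R$.

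It then remains to deduce $A=0$ from the vanishing of all these Jordan products. Given an arbitrary nonzero $x\in\X$, the Hahn--Banach theorem furnishes $f\in\X'$ with $f(x)=1$, so that $R=x\otimes f\in\I_1(\X)$ and hence $A\circ(x\otimes f)=0$. Since $f(x)=1\neq 0$, Lemma \ref{f(x)neq0_1} yields $Ax=0$ (and also $A'f=0$). As $x$ ranges over all nonzero vectors, this gives $A=0$, contradicting $A\neq 0$; therefore $A=0$, which is (i).

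I do not anticipate any serious obstacle. The only point worth flagging is the need to feed \emph{both} $R$ and $-R$ into the hypothesis so that Observation \ref{Observation}(ii) becomes available — this is precisely why statement (iii) is phrased with the rank-one anti-idempotents included. If one were only allowed rank-one idempotents, one would instead have to argue through Lemma \ref{f(x)neq0}, keeping track of the two alternatives $Ax=x$ and $A'f=f$ and then perturbing $x$ and $f$ to rule out the eigenvector-type behaviour; this still works but is noticeably less clean.
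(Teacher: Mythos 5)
Your proposal is correct and follows essentially the same route as the paper: feed both $R$ and $-R$ into the hypothesis, invoke Observation \ref{Observation}(ii) to get $A\circ R=0$ for every rank-one idempotent $R$, and then conclude $Ax=0$ for all $x$ via Lemma \ref{f(x)neq0_1}. Your explicit handling of the nonzeroness hypothesis in Observation \ref{Observation} (by arguing under the assumption $A\neq 0$) is a minor point of extra care that the paper leaves implicit.
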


\begin{proof}
Clearly (i)$\Rightarrow$(ii)$\Rightarrow$(iii). Assume that (iii) holds true. By Observation \ref{Observation}, $A\circ x\otimes f =0$ for all $x\in\X$ and all bounded functionals $f$ with $f(x)=1$. Then, by Lemma \ref{f(x)neq0_1}, $Ax=0$ for every $x\in \mathcal{X}$.
\qed\end{proof}

\begin{Lemma}\label{Lemma:A=I}
Let $A\in\B(\X)$. Then the following statements are equivalent.
\begin{enumerate}
    \item[(i)] $A=I$.
    \item[(ii)]$A\circ R\in\ixa$ for every $R\in{\I}_1(\X)$.
\end{enumerate} 
\end{Lemma}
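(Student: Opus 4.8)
The implication $(i)\Rightarrow(ii)$ is a one-line computation: if $A=I$, then for every $R\in\I_1(\X)$ we have $A\circ R=\tfrac12(IR+RI)=R$, and a rank-one idempotent is in particular a nonzero idempotent, so $A\circ R\in\ixa$. Thus the content of the lemma is the reverse implication, and I would prove it as a short deduction from Lemma \ref{lem:lambda_x}.

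For $(ii)\Rightarrow(i)$, the first step is to rule out $A=0$. Since $\dim\X\geq 3$, rank-one idempotents exist, and for any such $R$ we would have $A\circ R=0\notin\ixa$; this contradicts $(ii)$, so $A$ is a nonzero operator and Lemma \ref{lem:lambda_x} is applicable.

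The second step is the core argument. Fix an arbitrary nonzero vector $x\in\X$. Every functional $f\in\X'$ with $f(x)=1$ produces a rank-one idempotent $x\otimes f\in\I_1(\X)$ (using $f(x)=1$), so hypothesis $(ii)$ gives $A\circ(x\otimes f)\in\ixa=\I^*(\X)$ for \emph{all} such $f$. Applying Lemma \ref{lem:lambda_x} with $\lambda=1$ to this fixed $x$ yields $Ax=x$. As $x$ ranged over all nonzero vectors of $\X$, we conclude $A=I$, which completes the proof.

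I do not expect a genuine obstacle here; the only points requiring care are the preliminary observation that $(ii)$ forces $A\neq 0$ (needed to invoke Lemma \ref{lem:lambda_x}, whose hypothesis assumes a nonzero operator), and the implicit use of Hahn–Banach to ensure that for each fixed nonzero $x$ the family of functionals $f$ with $f(x)=1$ is rich enough — but this richness is exactly what Lemma \ref{lem:lambda_x} already presupposes, so nothing further is needed.
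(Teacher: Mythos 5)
Your argument is correct and is exactly the paper's route: the paper dismisses $(i)\Rightarrow(ii)$ as clear and obtains $(ii)\Rightarrow(i)$ as a direct consequence of Lemma \ref{lem:lambda_x} with $\lambda=1$, which is precisely what you spell out (including the harmless preliminary check that $A\neq0$).
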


\begin{proof} 
While (i)$\Rightarrow$(ii) is clear, the reverse implication is a simple consequence of Lemma \ref{lem:lambda_x}.
\qed\end{proof}

If $P\in\B(\X)$ is a nontrivial idempotent, the underlying Banach space $\X$ can be decomposed into the direct sum ${\X}=\im P\oplus\ker P$, where $\im P$ and $\ker P$ denote the image and the kernel of $P$, respectively. 
 
The next Lemma is then technical.

\begin{Lemma}\label{Lemma:id}
Let $\mathcal X$ be a Banach space with $\dim\X\ge2$ and let $A\in {\B}({\X})\backslash \{0,I,-I\}$. If $A^2=0$ or $A^2=I$, then there exists an operator $B\in \B(\X)$ of rank  at most two such that $A\circ B\in \ix$ and $B\circ B\notin\ix$.  
\end{Lemma}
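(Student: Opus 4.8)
The plan is to treat the two cases $A^2 = 0$ and $A^2 = I$ separately, in each case producing an explicit rank-at-most-two operator $B$. First I would handle $A^2 = 0$. Since $A \neq 0$, there is a vector $x$ with $Ax \neq 0$; set $y = Ax$, so $Ay = A^2 x = 0$ and the vectors $x, y$ are linearly independent. I would then look for $B$ supported on $\s\{x,y\}$, of the form $B = y \otimes g + x \otimes h$ for suitable functionals $g, h$, and impose two requirements: that $A \circ B$ is a (nonzero) idempotent, and that $B \circ B \notin \ix$. Using the rank-one calculus $A(u \otimes f) = Au \otimes f$ and $(u\otimes f)A = u \otimes A'f$, together with $Ax = y$, $Ay = 0$, one can write out $A \circ B$ explicitly and choose $g, h$ so that $A \circ B$ becomes, say, a rank-one idempotent $z \otimes \ell$ with $\ell(z) = 1$; the freedom in choosing $g(x), g(y), h(x), h(y)$ and the remaining action on a complement is ample. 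One then checks that the same $B$ can be arranged to have $B \circ B$ either zero (if it were, $B \circ B = 0 \in \ix$, which is bad) or non-idempotent — so care is needed: I would pick $B$ nilpotent-ish but with $B \circ B$ having a nonzero non-idempotent rank-one or rank-two part, e.g. arrange $B^2$ to be a nonzero nilpotent so that $B \circ B = 2B^2$ is a nonzero nilpotent, hence not in $\ix$.

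For the case $A^2 = I$, the operator $A$ is an involution $\neq \pm I$, so $\X$ decomposes as $\X = \X_+ \oplus \X_-$ into the $(+1)$- and $(-1)$-eigenspaces of $A$, both nonzero. Pick $0 \neq x \in \X_+$ and $0 \neq y \in \X_-$, so $Ax = x$, $Ay = -y$, and $x, y$ are independent. Again I would seek $B$ with range and "co-support" inside $\s\{x,y\}$, say $B = x \otimes f + y \otimes g$, compute $A \circ B = AB + BA$ using $Ax = x$, $Ay = -y$ and $A'f$, $A'g$, and exploit the fact that on the eigenspace decomposition the adjoint $A'$ acts as $\pm 1$ on the corresponding annihilator pieces. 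By choosing $f$ to be (up to scaling) a functional vanishing on $\X_-$ and $g$ one vanishing on $\X_+$, the cross terms can be controlled so that $A \circ B$ collapses to a manageable rank-one or rank-two operator that is forced to be idempotent, while $B \circ B = BA B$-type terms are arranged to be non-idempotent (for instance, scale so that $A \circ B$ is an idempotent of the right normalization but $B$ itself has $B \circ B$ with an eigenvalue other than $0$ or $1$).

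The main obstacle I anticipate is the bookkeeping needed to guarantee \emph{both} conditions simultaneously with a \emph{single} $B$ of rank $\le 2$: it is easy to make $A \circ B$ idempotent and easy to make $B \circ B$ non-idempotent, but the natural choices tend to pull against each other (e.g. forcing $A\circ B$ idempotent often forces enough structure on $B$ that $B\circ B$ also lands in $\ix$, such as $B$ being idempotent or $B \circ B = 0$). The resolution is to keep one extra scalar parameter free — for instance replace $B$ by $tB_0$ for a scalar $t$, or add a small nilpotent perturbation within $\s\{x,y\}$ — and observe that the idempotency constraint on $A \circ B$ is satisfied on a one-parameter family while the idempotency of $B \circ B$ fails for all but finitely many parameter values; since $\dim \s\{x,y\} = 2$, every operator produced has rank $\le 2$. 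I would verify the final explicit $B$ by the direct computation suggested above, appealing to Lemma \ref{f(x)=0} or Lemma \ref{f(x)neq0} to certify that $A \circ B \in \ix$, rather than re-deriving idempotency from scratch.
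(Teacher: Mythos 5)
There is a genuine gap. The lemma is an existence statement, and your proposal never actually exhibits an operator $B$ satisfying both conditions: you set up a sensible ansatz in each case, correctly anticipate that the two requirements pull against each other, but the mechanisms you offer to resolve that tension do not work as stated. Scaling $B_0\mapsto tB_0$ only helps when $A\circ B_0=0$: if $A\circ B_0$ is a \emph{nonzero} idempotent, then $A\circ(tB_0)=t(A\circ B_0)$ is idempotent only for $t\in\{0,1\}$, so there is no one-parameter family of admissible $B$'s to vary within. Worse, in the case $A^2=I$ your concrete configuration is self-defeating: with $x\in\X_+$, $y\in\X_-$, $f$ vanishing on $\X_-$ and $g$ vanishing on $\X_+$, the operator $B=x\otimes f+y\otimes g$ commutes with $A$, and $A\circ B=x\otimes f-y\otimes g$ is idempotent exactly when $f(x)\in\{0,1\}$ and $g(y)\in\{0,-1\}$; in every such case $B\circ B=B^2=f(x)\,x\otimes f+g(y)\,y\otimes g$ is again idempotent (possibly zero), so the pair of conditions can never be met with this block-diagonal pairing. (A minor slip: with the normalization $\circ=\tfrac12(AB+BA)$ used in the lemma, $B\circ B=B^2$, not $2B^2$.)

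The paper resolves the tension by making opposite structural choices. For $A^2=0$ it takes $x$ with $x,Ax$ independent, $f$ with $f(x)=0$, $f(Ax)=2$, and $B:=x\otimes A'f-Ax\otimes f+x\otimes f$; then $A\circ B=\tfrac12 Ax\otimes f+\tfrac12 x\otimes A'f$ is a sum of two orthogonal rank-one idempotents, while $Bx=2x$ kills idempotency of $B^2$. (This $B$ lies inside your ansatz family, so your case-$A^2=0$ plan is completable, just not completed.) For $A^2=I$ the trick is the \emph{anti}-diagonal pairing: choose $y$ with $Ay=y$, $z$ with $Az=-z$, $g$ annihilating the $+1$-eigenspace with $g(z)=2$, $h$ annihilating the $-1$-eigenspace with $h(y)=1$, and $B:=y\otimes g+z\otimes h$. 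Then $B$ anticommutes with $A$, so $A\circ B=0\in\I(\X)$ (the zero idempotent is allowed here), while $B^2z=2z$, so $B\circ B\notin\I(\X)$. Exploiting $A\circ B=0$ is exactly what makes the involution case easy, and it is the step your plan is missing.
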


\begin{proof}
Let us firstly assume that $A^2=0$ and $A\neq 0$. Then there exists an $x\in \X$ such that $x$ and $Ax$ are linearly independent. Hence, we can take an $f\in {\X}'$ such that $f(x)=0$ and $f(Ax)=2$. By setting $B:=x\otimes A'f-Ax\otimes f +x\otimes f$, we compute that $A\circ B=\frac{1}{2}Ax\otimes f +\frac{1}{2}x\otimes A'f$, which is a sum of two orthogonal rank-one idempotents, and thus an idempotent. Moreover, since $Bx=2x$, $B^2$ cannot be an idempotent.
	
It remains to consider  $A^2=I$. The operator $A$ is a nonscalar involution, so $A=I-2Q$ for some nontrivial idempotent $Q$. As $Q$ is nontrivial, we can choose a nonzero $y\in \ker Q$ and a nonzero $z\in \im Q$ to have $Ay=y$ and $Az=-z$. Then we pick a functional $g\in{\X}'$ such that $g(\ker Q)=0$ and $g(z)=2$ and take a $h\in{\X}'$ with $h(y) = 1$ and $h(\im Q)=0$. It is not difficult to verify that $A'g=-g$ and $A'h=h$. Set $B:=y\otimes g+z\otimes h$, then a simple computation shows that $A\circ B=0$. Furthermore, it is easy to see that $B^2z=2z$ and so, $B^2=B\circ B$ is not idempotent.
\qed\end{proof}

%%%%%%%%%%%%%%%%%%% P=Q
Up to now,  we have considered those operators $A\in\bx$ for which  $A$ or $A^2$ is a trivial idempotent.  We now turn our attention to the case of nontrivial idempotents.

\begin{Lemma}\label{Lemma:PQ=0}
Let  $A, P\in\bx$ with $A$ nonzero and $P$ nontrivial idempotent. If $A\circ(I-P)=0$, then $\im P$ is a nontrivial $A$-invariant subspace, and  $Ax=0$ for every $x\in\ker P$.
	
In particular, if $\rk P=1$, then the assumption $A\circ (I-P)=0$ gives that $A=\lambda P$, for some nonzero scalar $\lambda$, and additionally, when $A$ is idempotent, we have $A=P$.
\end{Lemma}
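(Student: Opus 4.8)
The plan is to convert the hypothesis into a clean algebraic identity and then read off everything from it. Since $A\circ(I-P)=\tfrac12\bigl(A(I-P)+(I-P)A\bigr)$, the assumption $A\circ(I-P)=0$ is equivalent to $2A=AP+PA$. Multiplying this identity on the left by $P$ gives $2PA=PAP+PA$, hence $PA=PAP$; multiplying on the right by $P$ gives $2AP=PAP+AP$, hence $AP=PAP$. Feeding these back into $2A=AP+PA$ yields the single identity $A=PAP$.

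From $A=PAP$ both asserted facts are immediate. If $x\in\ker P$, then $Px=0$, so $Ax=PAPx=0$; thus $\ker P\subseteq\ker A$. If $y\in\im P$, then $Py=y$, so $Ay=PAPy=P(Ay)\in\im P$; thus $\im P$ is $A$-invariant. Since $P$ is a nontrivial idempotent, $\im P$ is a closed subspace different from $\{0\}$ and $\X$, hence a nontrivial invariant subspace (and $A=PAP$ is nonzero by hypothesis).

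For the rank-one case, write $P=x\otimes f$ with $f(x)=1$. Using the rules $(x\otimes f)A=x\otimes A'f$ and $(x\otimes g)(x\otimes f)=g(x)\,(x\otimes f)$ recalled at the start of the section, one computes $A=PAP=(x\otimes f)A(x\otimes f)=f(Ax)\,(x\otimes f)=\lambda P$ with $\lambda=f(Ax)$, and $\lambda\neq0$ since $A\neq0$. If moreover $A$ is idempotent, then $A=A^2=\lambda^2P^2=\lambda^2P$, so $\lambda^2=\lambda$ (as $P\neq0$), forcing $\lambda=1$, i.e.\ $A=P$.

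There is essentially no obstacle here: the whole argument is a short chain of left/right multiplications by $P$. The only points demanding a little care are keeping the order of composition straight in the rank-one identities $A(x\otimes f)=Ax\otimes f$ and $(x\otimes f)A=x\otimes A'f$, and the (standard) fact that the image of a bounded idempotent on a Banach space is closed, so that "nontrivial invariant subspace" is literally justified.
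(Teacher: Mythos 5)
Your proof is correct and follows essentially the same route as the paper: the paper reads off the identity $A=PAP$ from the block decomposition of $A$ with respect to $\X=\im P\oplus\ker P$ (the vanishing of $A_{12},A_{21},A_{22}$), while you derive it by multiplying $2A=AP+PA$ by $P$ on each side — the same content in algebraic rather than matrix form. Your explicit rank-one computation just fills in what the paper dismisses as "obvious," and is fine.
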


\begin{proof}
With respect to the decomposition ${\X}=\im P\oplus\ker P$, operators $A$ and $P$ have the following matrix representations:
$$
A=\begin{bmatrix}A_{11}&A_{12}\\A_{21}&A_{22}\end{bmatrix}
\qquad\text{and}\qquad 
P=\begin{bmatrix}I&0\\0&0\end{bmatrix}.
$$
As $A\circ (I-P)=0$ it is clear that $A_{12}=0$, $A_{21}=0$ and $A_{22}=0$.  The second claim is now obvious.
\qed\end{proof}

\begin{Lemma}\label{Lemma:-P}
Let $P,Q\in{\I}_1(\X)$. If $P\neq Q$ and $Q\circ (I-P)\in \mathcal{I}(\mathcal{X})$, then there exists a rank-one idempotent $R\in{\I}_1(\X)$ such that $Q\circ R=0$ and $P\circ R\notin \mathcal{I}(\mathcal{X})$.
\end{Lemma}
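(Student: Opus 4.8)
I would parametrize $P=x\otimes f$ and $Q=y\otimes g$ with $f(x)=g(y)=1$ and look for $R$ of the form $z\otimes h$ with $h(z)=1$, so that $R\in{\I}_1(\X)$ automatically. By Lemma \ref{f(x)neq0_1}, the requirement $Q\circ R=0$ is exactly $g(z)=0$ and $h(y)=0$. By Lemmas \ref{f(x)neq0} and \ref{f(x)neq0_1}, the requirement $P\circ R\notin\I(\X)$ is guaranteed once $z\notin\s\{x\}$ and $h\notin\s\{f\}$ (so $P\circ R$ is not a nonzero idempotent) together with $f(z)\neq0$ (so $P\circ R\neq0$). Thus the whole lemma reduces to finding $z\in\X$ and $h\in\X'$ with $g(z)=0$, $f(z)\neq0$, $z\notin\s\{x\}$, $h(z)=1$, $h(y)=0$, $h\notin\s\{f\}$.

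The crux — and, I expect, the only real obstacle — is to deduce from $Q\circ(I-P)\in\I(\X)$ that $f$ and $g$ are linearly independent; this is exactly what makes $g(z)=0$ compatible with $f(z)\neq0$. I would argue by contradiction: if $f=cg$, then $g(x)=c^{-1}$ and $f(y)=c$, and a direct rank-one computation gives $QP=Q$ and $PQ=P$, hence $Q\circ(I-P)=\tfrac12(Q-P)$. Since $(Q-P)^2=Q-QP-PQ+P=0$, this operator squares to zero; a square-zero idempotent is zero, so $Q=P$, contradicting $P\neq Q$.

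Once $f$ and $g$ are known to be independent (equivalently $\ker g\neq\ker f$), the rest is elementary and uses $\dim\X\ge3$ twice. To choose $z$: pick $z_0\in\ker g$ with $f(z_0)\neq0$ and a nonzero $w\in\ker f\cap\ker g$ (nonzero since this intersection has codimension at most $2$), and let $z$ be whichever of the linearly independent pair $z_0,z_0+w$ does not lie on $\s\{x\}$; then $g(z)=0$, $f(z)\neq0$, $z\notin\s\{x\}$, and also $z\notin\s\{y\}$ because $g(z)=0\neq g(y)$. To choose $h$: since $z$ and $y$ are independent there is $h_0\in\X'$ with $h_0(z)=1$ and $h_0(y)=0$, and since $f(z)\neq0$ a nonzero $k\in\X'$ vanishing on both $z$ and $y$ cannot lie in $\s\{f\}$; replacing $h_0$ by $h_0+k$ if necessary gives $h$ with $h(z)=1$, $h(y)=0$, $h\notin\s\{f\}$. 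Setting $R:=z\otimes h$ then finishes the proof.
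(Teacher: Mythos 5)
Your proof is correct, and it takes a genuinely different route from the paper's. The paper extracts more from the hypothesis: after excluding $Q\circ(I-P)=0$ via Lemma \ref{Lemma:PQ=0} (which would force $P=Q$), it applies Lemma \ref{f(x)neq0} to $(I-P)\circ Q\in\ixa$ and obtains, in your notation, $f(y)=0$ or $g(x)=0$; it then writes down an explicit $R$ in each of the two symmetric cases and detects $P\circ R\notin\I(\X)$ by a trace computation (the trace equals $\tfrac12$). You use the hypothesis only to rule out proportionality of $f$ and $g$ --- your argument for this is valid, since $f=cg$ gives $QP=Q$ and $PQ=P$, hence $Q\circ(I-P)=\tfrac12(Q-P)$ with $(Q-P)^2=0$, and a square-zero idempotent vanishes, forcing $P=Q$ --- and then you choose $R=z\otimes h$ by a general-position argument, certifying that $P\circ R$ is not a nonzero idempotent via Lemma \ref{f(x)neq0} (excluded by $z\notin\s\{x\}$ and $h\notin\s\{f\}$) and that it is nonzero via Lemma \ref{f(x)neq0_1} (from $f(z)\neq0$). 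Your selections of $z$ (the linearly independent pair $z_0$, $z_0+w$ with $w\in\ker f\cap\ker g$) and of $h$ (perturbing $h_0$ by a nonzero $k$ annihilating $z$ and $y$) are sound, and both invocations of $\dim\X\geq3$ occur exactly where they are needed. What your route buys is the absence of a case split and reliance on a formally weaker consequence of the hypothesis (linear independence of $f$ and $g$, which is implied by, but weaker than, the paper's dichotomy), so your argument in fact proves a marginally stronger statement; what the paper's route buys is brevity of verification once the dichotomy is in hand, since its candidate $R$ is explicit and the non-idempotency check is a one-line trace argument.
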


\begin{proof}
Let $P:=x_1\otimes f_1$ and $Q:=x_2\otimes f_2$, for some $x_1,x_2\in\X$ and $f_1,f_2\in{\X}'$ with $f_1(x_1)=1=f_2(x_2)$. The condition $Q\circ (I-P)\in \mathcal{I}(\mathcal{X})$ forces us to assume that $Q\circ (I-P)\in\ixa$, since otherwise,   $Q=P$ by virtue of Lemma \ref{Lemma:PQ=0}. Note that the Jordan product is a commutative operation, thus, $Q\circ(I-P)=(I-P)\circ(x_2\otimes f_2)\in\ixa$. Clearly, $(I-P)x_2=x_2$ or $(I-P)'f_2=f_2$ by Lemma \ref{f(x)neq0}, which is equivalent to $Px_2=0$ or $P' f_2 = 0$. In particular, $f_1(x_2)=0$ or $f_2(x_1)=0$.

Suppose $f_1(x_2)=0$. Vectors $x_1$ and $x_2$ are obviously linearly independent, so we can choose a nonzero functional $g\in{\X'}$ with $g(x_1)=0=g(x_2)$.  It is easy to see that $f_1,f_2$ and $g$ are now linearly independent, and we can pick a vector $y\in\X$ such that $f_1(y)=0=f_2(y)$ and $g(y)=1$. Set $R:=\frac{1}{2}(y+x_1-f_2(x_1)x_2)\otimes (g+f_1)$ and check that $R\in  {\mathcal I}_1(\mathcal{X})$ and that $Q\circ R=0$. On the other hand, $P\circ R$ is not idempotent since its trace equals $\frac{1}{2}$.
	
The case  $f_2(x_1)=0$ can be treated in a very similar manner;  there exists a vector $y \in \X$ such that $f_1(y)=0=f_2(y)$ and a functional $g \in \X'$ with $g(x_1)=0=g(x_2)$ and $g(y)=1$. Define $R:=\frac12(y+x_1)\otimes(g+f_1-f_1(x_2)f_2)$ which again satisfies $R \in \mathcal{I}_1(\X)$ and $Q \circ R = 0$, while $P \circ R$ is not idempotent.
\qed\end{proof}

\begin{Lemma}\label{Lemma:PQ}
Let $P,Q\in\ix$ be nontrivial idempotents. Then the following statements are equivalent.
\begin{enumerate}
    \item[(i)] $P\bot Q$.
    \item[(ii)] $P\circ Q=0$.
    \item[(iii)] $-P\circ Q\in \ix$.
\end{enumerate}
\end{Lemma}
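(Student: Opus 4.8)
The plan is to prove the cyclic chain of implications (i)$\Rightarrow$(ii)$\Rightarrow$(iii)$\Rightarrow$(i), which is the natural route given the tools assembled above. The implication (i)$\Rightarrow$(ii) is immediate: if $P\bot Q$ then $PQ=QP=0$, so $P\circ Q=\tfrac12(PQ+QP)=0$. For (ii)$\Rightarrow$(iii), simply observe that $-P\circ Q=-0=0$, and $0$ is idempotent (it lies in $\ix$), so this implication is also trivial. Thus all the real content is in (iii)$\Rightarrow$(i): assuming $-P\circ Q\in\ix$, i.e.\ $\bigl(\tfrac12(PQ+QP)\bigr)^2=-\tfrac12(PQ+QP)$, we must conclude $PQ=QP=0$.

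For the core implication, I would compute directly. Write $S:=PQ+QP$, so the hypothesis is $\tfrac14 S^2=-\tfrac12 S$, i.e.\ $S^2=-2S$, i.e.\ $S^2+2S=0$, equivalently $S(S+2I)=0$. Now expand $S^2=(PQ+QP)^2=PQPQ+PQQP+QPPQ+QPQP=PQPQ+PQP+QPQ+QPQP$, using $P^2=P$ and $Q^2=Q$. Similarly $2S=2PQ+2QP$. Adding and setting equal to zero gives
$$
PQPQ+QPQP+PQP+QPQ+2PQ+2QP=0.
$$
The strategy is then to multiply this identity on the left and right by $P$ and by $Q$ in various combinations to isolate pieces. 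For instance, multiplying on the left by $P$ and on the right by $P$ collapses many terms (using $P^2=P$) and should yield a relation purely in $PQP$; then one can argue that $PQP$ is an idempotent (or nilpotent-like) operator and pin down $PQP=0$, from which $(QP)'(QP)$-type arguments force $QP=0$ and symmetrically $PQ=0$. Alternatively, and perhaps more cleanly, one sandwiches the relation $S(S+2I)=0$ between $P$ on both sides to get, after simplification, $PQPQP$-terms reducing to a quadratic in $PQP$; a parallel computation with $Q$ gives the symmetric statement, and combining the two yields orthogonality.

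I expect the main obstacle to be the bookkeeping in the sandwiching step: making sure that after multiplying by $P$ and $Q$ one genuinely isolates $PQP$ (respectively $QPQ$) rather than being left with a coupled system, and then correctly concluding that the resulting quadratic operator equation forces $PQP=0$. One subtlety is that $PQP$ acts as an operator on $\im P$ and one needs it to be both ``idempotent-like'' and have spectrum forcing it to vanish — this is where one uses that $P$ and $Q$ are honest idempotents (so $\|$-free spectral/algebraic arguments apply) rather than arbitrary operators. A possible shortcut worth trying first: note that $\tr$ is available in the finite-dimensional parallel and, more robustly, that $-P\circ Q\in\ix$ together with $P\circ Q$ being automatically relatable via Observation~\ref{Observation}(ii) — indeed if we also knew $P\circ Q\in\ix$ we would be done immediately. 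So an attractive alternative plan is: show first that $-P\circ Q\in\ix$ already forces $P\circ Q\in\ix$ as well (perhaps by noting $(P\circ Q)$ and $-(P\circ Q)$ have the same square once $S^2=-2S$ is combined with the general identity $S^2$ relates symmetrically), and then invoke Observation~\ref{Observation}(ii) to get $P\circ Q=0$, which is (ii), and then (ii)$\Rightarrow$(i) follows from the standard fact that two idempotents with $PQ+QP=0$ satisfy $PQ=QP=0$ (multiply $PQ+QP=0$ on left by $P$: $PQ+PQP=0$; on right by $P$: $PQP+QP=0$; subtract: $PQ=QP$; then $2PQ=0$). I would pursue this cleaner route and fall back on the brute-force sandwiching only if the symmetry claim turns out to be fiddly.
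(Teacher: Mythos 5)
Your implications (i)$\Rightarrow$(ii)$\Rightarrow$(iii), and the final remark that (ii)$\Rightarrow$(i) for idempotents, are fine, but the substance of the lemma --- (iii)$\Rightarrow$(i), i.e.\ that $S:=PQ+QP$ satisfying $S^2=-2S$ must vanish --- is established by neither of your routes. The route you say you would pursue first is circular: $P\circ Q\in\ix$ means $S^2=2S$, so combined with the hypothesis $S^2=-2S$ it instantly gives $S=0$; hence the claim ``$-P\circ Q\in\ix$ forces $P\circ Q\in\ix$'' \emph{is} the statement to be proved, and your proposed justification (that $P\circ Q$ and $-(P\circ Q)$ have the same square) carries no information. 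The fallback sandwiching route fails at its two decisive steps. Compressing $S(S+2I)=0$ by $P$ on both sides yields only $3(PQP)^2+5\,PQP=0$, and this alone does not force $PQP=0$: the compression of a general (non--self-adjoint) idempotent is spectrally unconstrained --- in $M_2$ take $P=\left[\begin{smallmatrix}1&0\\0&0\end{smallmatrix}\right]$ and the idempotent $Q=\left[\begin{smallmatrix}a&1\\a(1-a)&1-a\end{smallmatrix}\right]$ with $a=-5/3$; then $PQP\neq0$ satisfies exactly that quadratic relation. Moreover, even $PQP=0$ would not give $QP=0$ (take $P=\left[\begin{smallmatrix}1&0\\0&0\end{smallmatrix}\right]$, $Q=\left[\begin{smallmatrix}0&0\\1&1\end{smallmatrix}\right]$), and the ``$(QP)'(QP)$''-type argument you invoke is a Hilbert-space positivity device with no counterpart in $\bx$ for a Banach space $\X$.

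What does close the argument --- and is the paper's proof --- is to keep the full coupled system rather than the single $P$-compression: write $Q=\left[\begin{smallmatrix}A&B\\C&D\end{smallmatrix}\right]$ with respect to $\X=\im P\oplus\ker P$, so that $-P\circ Q=-\frac12\left[\begin{smallmatrix}2A&B\\C&0\end{smallmatrix}\right]$. Then $Q^2=Q$ together with $(-P\circ Q)^2=-P\circ Q$ gives, besides the corner relations $A=A^2+BC$ and $4A=-4A^2-BC$, also $B=AB+BD$, $B=-AB$, $C=CA+DC$, $C=-CA$, $CB=0$ and hence $D=D^2$. These yield $B(D-2I)=0$ and $(D-2I)C=0$, and since $D-2I$ is invertible ($D$ being idempotent), $B=C=0$; the corner relations then give $A=A^2=-A^2$, so $A=0$, i.e.\ $P\bot Q$. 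In short, your plan only becomes a proof once you retain and exploit the off-diagonal equations --- precisely the ``coupled system'' you hoped to avoid --- so as written the key implication remains unproved.
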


\begin{proof}
The implications (i)$\Rightarrow$(ii)$\Rightarrow$(iii) are straightforward. It remains to prove (iii)$\Rightarrow$(i). So, let us assume that $-P\circ Q\in \ix$. With respect to the direct sum decomposition ${\X}=\im P\oplus\ker P$, operators $P$ and $Q$ have the following matrix representations:
$$
P=\begin{bmatrix}I&0\\0&0\end{bmatrix}
\qquad \text{and}\qquad 
Q=\begin{bmatrix}A&B\\C&D\end{bmatrix}.
$$
That $Q$ is idempotent and the property $-P\circ Q=-\frac{1}{2}\left[\begin{smallmatrix}2A&B\\C&0 \end{smallmatrix}\right]\in\ix$ can be presented as
\begin{align}
	&A=A^2+BC, &\text{and}&\hspace{2cm}4A=-4A^2-BC\label{eq:A} \\
	&B=AB+BD,&\text{and}&\hspace{2cm}\ B=-AB,\label{eq:B}\\
	&C=CA+DC&\text{and}&\hspace{2cm}\ C=-CA, \label{eq:C}\\
	&D=CB+D^2&\text{and}&\hspace{2.1cm}\ 0=CB. \label{eq:D}
\end{align}
From \eqref{eq:D}, we observe that $D$ must be idempotent and from \eqref{eq:C} that $DC=C-CA=2C$, which is the same as $\left(D-2I\right)C=0$. Since $D$ is idempotent, it is clear that $D-2I$ is invertible; thus,  $C=0$. That $B=0$ follows similarly from \eqref{eq:B}. Then, from \eqref{eq:A}, we easily infer that $A=0$, and the proof is closed.
\qed\end{proof}

Despite its simplicity, the following lemma serves as a powerful tool that facilitates a unified approach to several subsequent results.

\begin{Lemma} \label{lem:X}
Let $\mathcal{U}$ be a Banach space with $\dim\;\mathcal U\ge2$ and suppose that $\mathcal{U}=\mathcal{Y} \oplus \mathcal{Z}$, where $\mathcal{Y}$ is a $k$-dimensional subspace of  $\mathcal{U}$ with $1\le k <\dim \mathcal{U}$. Let operators $A$ and $T_R$ be represented in the operator matrix form with respect to this decomposition by
$$A=
\begin{bmatrix}
A_{11} & A_{12}\\ 
A_{21} & A_{22}
\end{bmatrix}
\qquad\text{and}\qquad
T_R=
\begin{bmatrix}
X & R\\  
0 & 0
\end{bmatrix},
$$
where $0\neq X\in\mathcal{B(Y)}$ and $R: \mathcal{Z}\to  \mathcal{Y}$ is a linear rank-at-most-one   operator. If $A\circ T_R$ is a nonzero idempotent for every  $R$, then $A_{21}=0$ and $A_{11} \circ  X $ is nonzero idempotent.
\end{Lemma}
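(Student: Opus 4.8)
The plan is to reduce the statement to a trace computation, exploiting the fact that $T_R$ always has finite rank. First I would record the elementary identity
$$
2(A\circ T_R)=AT_R+T_RA=\begin{bmatrix}A_{11}X+XA_{11}+RA_{21} & A_{11}R+XA_{12}+RA_{22}\\ A_{21}X & A_{21}R\end{bmatrix},
$$
and note that since $\im T_R\subseteq\mathcal Y$ with $\dim\mathcal Y=k$, the operator $T_R$ has rank at most $k$; hence $A\circ T_R$ has finite rank, and being (by hypothesis) a nonzero idempotent, $\tr(A\circ T_R)\in\NN$ with $\tr(A\circ T_R)\ge 1$. Using $\tr(UV)=\tr(VU)$ whenever one factor is finite-rank, this trace equals $\tr(AT_R)=\tr(A_{11}X)+\tr(A_{21}R)$, where $A_{11}X\in\mathcal B(\mathcal Y)$ has an ordinary matrix trace and $A_{21}R$ is a finite-rank operator on $\mathcal Z$.

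The main step is to force $A_{21}=0$, and I expect this to be essentially the only real content of the lemma. Writing a rank-at-most-one $R$ as $R=y\otimes g$ with $y\in\mathcal Y$ and $g\in\mathcal Z'$, one computes $\tr(A_{21}R)=\tr(RA_{21})=g(A_{21}y)$, so that
$$
\tr(A\circ T_R)=\tr(A_{11}X)+g(A_{21}y).
$$
Taking $R=0$ shows $\tr(A_{11}X)=\tr(A\circ T_0)\in\NN$. Now, if $A_{21}\neq0$, I would choose $y\in\mathcal Y$ with $A_{21}y\neq0$ and, by the Hahn--Banach theorem, a functional $g\in\mathcal Z'$ with $g(A_{21}y)=\tfrac12$; then for this $R$ we get $\tr(A\circ T_R)=\tr(A_{11}X)+\tfrac12\notin\NN$, contradicting that $A\circ T_R$ is a nonzero idempotent. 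Hence $A_{21}=0$.

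Finally, with $A_{21}=0$ the displayed matrix becomes block upper triangular,
$$
A\circ T_R=\begin{bmatrix}A_{11}\circ X & \tfrac12(A_{11}R+XA_{12}+RA_{22})\\ 0 & 0\end{bmatrix},
$$
so squaring and comparing $(1,1)$-entries, the idempotency of $A\circ T_R$ gives $(A_{11}\circ X)^2=A_{11}\circ X$; thus $A_{11}\circ X$ is an idempotent. To see it is nonzero, I would again take $R=0$: then $A\circ T_0=\left[\begin{smallmatrix}A_{11}\circ X & \tfrac12 XA_{12}\\ 0 & 0\end{smallmatrix}\right]$, and if $A_{11}\circ X$ were $0$ this operator would square to $0$, hence be $0$, contradicting that it is a nonzero idempotent. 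Therefore $A_{11}\circ X$ is a nonzero idempotent, which is what we wanted. The only points that deserve care are the standard facts that a finite-rank idempotent on a Banach space has nonnegative-integer trace and that the trace is cyclic against a finite-rank factor, together with the Hahn--Banach selection of $g$; everything else is routine block bookkeeping.
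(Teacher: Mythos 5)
Your proposal is correct and follows essentially the same route as the paper: both arguments rest on the fact that the finite-rank idempotent $A\circ T_R$ has integer trace equal to $\mathrm{trace}(A_{11}\circ X)+\mathrm{trace}(A_{21}R)$, and that varying the rank-one $R$ would move this trace off the integers unless $A_{21}=0$. Your explicit Hahn--Banach choice of $g$ with $g(A_{21}y)=\tfrac12$ and the block-triangular check that $A_{11}\circ X$ is a nonzero idempotent merely spell out details the paper leaves implicit.
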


\begin{proof}
 From
$$
A\circ T_R =
\begin{bmatrix}
A_{11}\circ X + \frac{1}{2}RA_{21} &  \frac{1}{2}(A_{11}R+XA_{12}+RA_{22})\\
\frac{1}{2}A_{21}X &  \frac{1}{2}A_{21}R
\end{bmatrix}\in\ixa 
$$
we observe that $A\circ T_R$ is a nonzero idempotent of rank at most $2k$. So, for every $R$, the trace of $A\circ T_R$ belongs to the discrete set
$\{1,2,\dots ,2k\}$. On the other hand,
\begin{center}
    $\mathrm{trace} (A\circ T_R)= \mathrm{trace}(A_{11}\circ X) + \frac12 \mathrm{trace}(R A_{21})+\frac12\mathrm{trace}(A_{21}R).$	    
\end{center}
Since $R$ is arbitrary, we easily get $A_{21}=0$, and consequently, $A_{11}\circ X$ must be a nonzero idempotent as desired.
\qed\end{proof}

\begin{Corollary}\label{corr:J1}
Let $A,B\in\bx$  and let $\lambda\in\CC$ be nonzero. Suppose that $A \circ X\in  {\ixa}$ if and only if $B \circ X \in {\ixa}$ for every $\lambda X\in {\mathcal I}_1(\X)$. Then, for every nonzero $x\in\X$ we have $Ax=\lambda x$ if and only if  $Bx=\lambda x$.
\end{Corollary}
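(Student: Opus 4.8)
The plan is to reduce the statement to a single application of Lemma \ref{lem:X} in the one-dimensional case $k=1$, together with Lemma \ref{lem:lambda_x}. First I would observe that the hypothesis is symmetric in $A$ and $B$, so it suffices to prove one direction: assume $Ax=\lambda x$ for some fixed nonzero $x\in\X$ and deduce $Bx=\lambda x$. Fix such an $x$ and let $\mathcal Y=\s\{x\}$, a one-dimensional subspace, and pick any direct-sum complement $\mathcal Z$ so that $\X=\mathcal Y\oplus\mathcal Z$.

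Next I would convert the rank-one operators appearing in the hypothesis into the form required by Lemma \ref{lem:X}. A rank-one idempotent $R_0$ with $\lambda R_0\in\mathcal I_1(\X)$ — equivalently, $R_0=x\otimes f$ (or rather the operator $\tfrac1\lambda$ times a rank-one idempotent whose range is $\s\{x\}$) — that has range $\s\{x\}$ is exactly an operator whose matrix with respect to $\X=\mathcal Y\oplus\mathcal Z$ has the shape $\left[\begin{smallmatrix}X & R\\ 0 & 0\end{smallmatrix}\right]$ with $X\in\mathcal B(\mathcal Y)$ the scalar $\tfrac1\lambda$ and $R:\mathcal Z\to\mathcal Y$ an arbitrary rank-at-most-one operator; varying $R$ over all such operators corresponds to varying the functional $f$ subject to $f(x)=\tfrac1\lambda$. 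The key point is that since $Ax=\lambda x$, Lemma \ref{f(x)neq0} (applied to $\tfrac1\lambda$ times each of these rank-one idempotents, noting $A\circ(x\otimes f)\in\I^\ast(\X)$ whenever $A x=\tfrac{1}{f(x)}x=\lambda x$) guarantees $A\circ(\lambda^{-1}\cdot(\text{that idempotent}))\in\ixa$ for every admissible $R$. Hence, by the hypothesis, $B\circ(\lambda^{-1}\cdot(\text{that idempotent}))\in\ixa$ for every admissible $R$ as well.

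Now I would invoke the structural conclusion: writing $B=\left[\begin{smallmatrix}B_{11}&B_{12}\\ B_{21}&B_{22}\end{smallmatrix}\right]$ in the same decomposition and applying Lemma \ref{lem:X} (with the role of $T_R$ played by $\tfrac1\lambda$ times the rank-one idempotents, so $X=\tfrac1\lambda I_{\mathcal Y}$), we obtain $B_{21}=0$, i.e. $Bx\in\s\{x\}$, say $Bx=\mu x$. It then remains to pin down $\mu=\lambda$, and here I would use Lemma \ref{lem:lambda_x} directly: the family $\{B\circ(x\otimes f): f(x)=\tfrac1\lambda\}$ lies in $\I^\ast(\X)$, so that lemma yields $Bx=\lambda x$, as claimed. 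The main obstacle — really the only place requiring care — is the bookkeeping in the second paragraph: making precise that "$\lambda X\in\mathcal I_1(\X)$ with $X$ having range $\s\{x\}$" ranges over exactly the operators $\tfrac1\lambda(x\otimes f)$ with $f(x)=\lambda^{-1}$, and that these realize all rank-at-most-one $R:\mathcal Z\to\mathcal Y$ in the matrix form of Lemma \ref{lem:X}. Once the translation is set up correctly, the three cited lemmas do all the work, so no long computation is needed.
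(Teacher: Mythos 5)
Your proof is correct and follows essentially the paper's own route: the paper simply notes that the claim is a direct consequence of Lemma \ref{lem:X} with $k=1$ (or, alternatively, of Lemma \ref{lem:lambda_x}), and your write-up just fills in that translation, using symmetry in $A$ and $B$ and realizing the operators $x\otimes h$ with $h(x)=\lambda^{-1}$ as the matrices $T_R$. The only small redundancy is at the end: Lemma \ref{lem:X} already tells you that $B_{11}\circ\tfrac1\lambda I_{\mathcal Y}$ is a nonzero idempotent, which forces $\mu=\lambda$ immediately, so the extra appeal to Lemma \ref{lem:lambda_x} is not needed (though it is a valid alternative finish).
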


\begin{proof}
 The claim is a direct consequence of  Lemma \ref{lem:X} in the case $k=1$; alternatively, it can also be derived from Lemma \ref{lem:lambda_x}.
\end{proof}\qed

Let $A\in\bx$ be an algebraic operator of order $n\in\NN$. Then there exists a polynomial $p(x)\in{\CC}[x]$ of degree $n$ such that $p(A)=0$, and $q(A)\neq0$ for any polynomial $q\in{\CC}[x]$ of degree less than $n$. It is well known that any
algebraic operator $A\in\bx$ shares the same finite spectrum with its adjoint $A'$, and that each spectral value of $A$ and $A'$ is in fact an eigenvalue.

\begin{Lemma}\label{corr:alg}
Let $A,B\in \bx$ be algebraic operators such  that 
$$
A \circ X\in  {\ixa}\Leftrightarrow  B \circ X \in {\ixa}
\qquad\text{and}\qquad
A\circ X=0\Leftrightarrow B\circ X=0
$$
for every rank-one $X\in\bx$. Then, for every nonzero $x\in\X$ we have $Ax=0$ if and only if  $Bx=0$.    
\end{Lemma}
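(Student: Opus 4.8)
The plan is to reduce the statement to a single rank-one test operator, combined with Corollary~\ref{corr:J1} and the elementary spectral theory of algebraic operators recalled just before the lemma. Since the two biconditionals in the hypothesis are symmetric in $A$ and $B$ and both operators are algebraic, it suffices to prove that $Ax=0$ implies $Bx=0$ for an arbitrary fixed nonzero $x\in\X$.

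So assume $Ax=0$ with $x\neq0$. Then $A$ is not injective, hence $0\in\sigma(A)$; as $A$ is algebraic, $\sigma(A')=\sigma(A)\ni0$ and $0$ is an eigenvalue of $A'$, so we may fix a nonzero $f_0\in\X'$ with $A'f_0=0$. Put $X_0:=x\otimes f_0$, which is a rank-one operator. A direct computation gives
$$
A\circ X_0=\tfrac{1}{2}\bigl(Ax\otimes f_0+x\otimes A'f_0\bigr)=0,
$$
so by the hypothesis $B\circ X_0=0$, that is, $Bx\otimes f_0+x\otimes B'f_0=0$. Suppose now, for contradiction, that $Bx\neq0$. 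Choosing $z\in\X$ with $f_0(z)=1$ and evaluating the identity $Bx\otimes f_0=-\,x\otimes B'f_0$ at $z$ yields $Bx=-(B'f_0)(z)\,x$, so $Bx=\mu x$ for some necessarily nonzero scalar $\mu$. But the first biconditional in the hypothesis, applied to the rank-one operators $\mu^{-1}R$ with $R\in{\I}_1(\X)$, is exactly the assumption of Corollary~\ref{corr:J1} with $\lambda=\mu$; hence $Bx=\mu x$ forces $Ax=\mu x$, contradicting $Ax=0\neq\mu x$. Therefore $Bx=0$, and interchanging the roles of $A$ and $B$ gives the converse implication.

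The proof is short precisely because Corollary~\ref{corr:J1} already encapsulates the nonzero eigenvalue behaviour; the one genuine use of the algebraicity of $A$ (and, by symmetry, of $B$) is the step $0\in\sigma(A)\Rightarrow\ker A'\neq\{0\}$, which produces a functional $f_0$ with $A'f_0=0$ and hence a rank-one operator killed by $A\circ(\cdot)$ that is anchored at the prescribed vector $x$; dropping algebraicity would break exactly this point. The only other thing to notice is that $B\circ X_0=0$ together with $Bx\neq0$ already pins $Bx$ to a nonzero multiple of $x$, after which Corollary~\ref{corr:J1} closes the argument. I expect that last observation --- recognising that the bare operator identity coming from $B\circ X_0=0$ suffices, with no finer case analysis on $f_0(x)$ --- to be the step most easily overlooked rather than a real obstacle.
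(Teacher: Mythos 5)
Your proof is correct and takes essentially the same route as the paper's: both use algebraicity of $A$ only to produce a nonzero $f$ with $A'f=0$, pass from $A\circ(x\otimes f)=0$ to $B\circ(x\otimes f)=0$, conclude $Bx=\mu x$, and exclude $\mu\neq0$ by invoking Corollary~\ref{corr:J1}.
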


\begin{proof}
Let us assume  that $Ax=0$ for some nonzero $x\in\X$. Since $0$ is also an eigenvalue of $A'$, there exists a nonzero $f\in\X'$ such that $A'f = 0$. This gives rise to $A\circ x\otimes f = 0$, and in turn, $B\circ x\otimes f = 0$. Then $Bx = \lambda x$ and $B'f = -\lambda f$ for some $\lambda\in\CC$. If $\lambda\neq0$, then, by Corollary \ref{corr:J1}, $Ax = \lambda x$, a contradiction. Therefore, $\lambda=0$ and $Bx = 0$.  
\qed\end{proof}

A special class of algebraic operators is the set of tripotent operators, denoted by ${\mathcal K}(\X) \subseteq \bx$. An operator $A \in \bx$ is called tripotent if $A^3 = A$. According to \cite[Proposition 1]{tripotent}, for every $A \in {\mathcal K}(\X)$ there exist unique orthogonal  $P, Q \in \ix$ such that $A = P - Q$. In particular, there exists a decomposition of $\X$ with respect to which the operator $A \in {\mathcal K}(\X)$ can be represented in operator matrix form as $A = I \oplus -I \oplus 0$,  where some of the summands may be missing if the corresponding subspaces are trivial.

\begin{Corollary}\label{corr:D}
Let $A,B\in \bx$ be such that
$$
A \circ X\in  {\ixa}\Leftrightarrow  B \circ X \in {\ixa}
\qquad\text{and}\qquad
A\circ X=0\Leftrightarrow B\circ X=0
$$
for every  $X\in {\mathcal I}_1^\pm(\X)$. If at least one of the operators $A$ or $B$ is tripotent, then $A=B$.
\end{Corollary}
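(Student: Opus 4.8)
The plan is to reduce the general statement to the tripotent one by showing that the hypotheses force $A$ and $B$ to have the same action on every relevant eigenvector, together with the same kernel. Assume without loss of generality that $A$ is tripotent; by the remark preceding the corollary we may choose a decomposition $\X = \X_1 \oplus \X_{-1} \oplus \X_0$ with $A = I \oplus -I \oplus 0$, where possibly some summands are absent. The first step is to extract structural information about $B$ from this decomposition. For each nonzero $x \in \X_1$ we have $Ax = x$, so applying Corollary~\ref{corr:J1} with $\lambda = 1$ (whose hypothesis is a consequence of the assumed equivalence on ${\mathcal I}_1^\pm(\X)$, since $X \in {\mathcal I}_1(\X)$ means $1\cdot X \in {\mathcal I}_1(\X)$) yields $Bx = x$. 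Likewise, for nonzero $x \in \X_{-1}$ we have $Ax = -x$; here one applies Corollary~\ref{corr:J1} with $\lambda = -1$, noting that $-X \in {\mathcal I}_1(\X)$ precisely when $X \in -{\mathcal I}_1(\X)$, so the relevant test operators still lie in ${\mathcal I}_1^\pm(\X)$; this gives $Bx = -x$. Finally, for nonzero $x \in \X_0$ we have $Ax = 0$, and since $A$ is algebraic, Lemma~\ref{corr:alg} (with its hypotheses again following from the assumed equivalences) gives $Bx = 0$.

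With this in hand, $B$ agrees with $A$ on a spanning set: $B = I$ on $\X_1$, $B = -I$ on $\X_{-1}$, and $B = 0$ on $\X_0$. Since $\X = \X_1 \oplus \X_{-1} \oplus \X_0$ and each $x \in \X$ decomposes uniquely as a sum of vectors from these subspaces, linearity of $B$ forces $B = I \oplus -I \oplus 0 = A$. This closes the case where $A$ is tripotent; the case where $B$ is tripotent is symmetric, interchanging the roles of $A$ and $B$ throughout (the hypotheses are symmetric in $A$ and $B$).

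The one genuinely delicate point is verifying that the hypotheses of Corollary~\ref{corr:J1} and of Lemma~\ref{corr:alg} are actually met. For Corollary~\ref{corr:J1} one needs: $A \circ X \in \ixa \Leftrightarrow B \circ X \in \ixa$ for every $X$ with $\lambda X \in {\mathcal I}_1(\X)$, for $\lambda \in \{1,-1\}$. When $\lambda = 1$ this is the hypothesis restricted to $X \in {\mathcal I}_1(\X) \subseteq {\mathcal I}_1^\pm(\X)$; when $\lambda = -1$ it is the hypothesis restricted to $-X \in {\mathcal I}_1(\X)$, i.e. $X \in -{\mathcal I}_1(\X) \subseteq {\mathcal I}_1^\pm(\X)$. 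For Lemma~\ref{corr:alg} one needs both the $\ixa$-equivalence and the zero-equivalence for every rank-one $X$; but the corollary only assumes these for $X \in {\mathcal I}_1^\pm(\X)$. This gap must be bridged: one should check that every rank-one operator $x \otimes f$ relevant to the proof of Lemma~\ref{corr:alg} can be rescaled to lie in ${\mathcal I}_1^\pm(\X)$, or — more carefully — revisit the proof of Lemma~\ref{corr:alg}, where the only rank-one operator used is $x\otimes f$ with $f(x) = 0$ arising from a common null vector; such a nilpotent rank-one operator is not in ${\mathcal I}_1^\pm(\X)$, so one must instead argue directly. The cleanest route is to observe that $A$ being tripotent (hence $A\circ(x\otimes f)=0$ can be detected through idempotency tests) lets us replace the nilpotent test operator by suitable rank-one idempotents or anti-idempotents adapted to the eigenspace decomposition of $A$, recovering $Bx = 0$ on $\X_0$ from the $\ixa$-equivalence alone. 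I expect this bookkeeping — ensuring all auxiliary lemmas are invoked only with test operators in the allowed class ${\mathcal I}_1^\pm(\X)$ — to be the main obstacle, while the rest is a routine eigenspace argument.
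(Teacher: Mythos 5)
Your treatment of the eigenspaces $\X_1$ and $\X_{-1}$ is correct and is exactly the paper's route: Corollary \ref{corr:J1} with $\lambda=\pm1$, the test operators lying in $\mathcal{I}_1(\X)$ resp. $-\mathcal{I}_1(\X)\subseteq\mathcal{I}_1^\pm(\X)$, followed by the spanning/linearity step. The genuine gap is the kernel part: you never actually prove $Bx=0$ for nonzero $x\in\X_0$. You rightly notice that Lemma \ref{corr:alg} cannot be quoted verbatim (its hypothesis asks for the two equivalences for \emph{all} rank-one $X$, and for both operators algebraic, while the corollary only supplies the equivalences on $\mathcal{I}_1^\pm(\X)$ and tripotency of one operator), but your proposed repair rests on a misreading: the rank-one operator in the proof of Lemma \ref{corr:alg} is $x\otimes f$ with $f$ \emph{any} functional satisfying $A'f=0$; nothing forces $f(x)=0$, so it need not be nilpotent. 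Your alternative plan --- recovering $Bx=0$ ``from the $\ixa$-equivalence alone'' by unspecified idempotency tests --- is left unexecuted and is not obviously workable, so as written the $\X_0$ case is an announced intention rather than a proof.

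The fix is short and uses the zero-equivalence, which you abandoned too quickly. Since $A=I\oplus-I\oplus0$, the subspaces $\ker A=\X_0$ and $\im A=\X_1\oplus\X_{-1}$ are closed and complementary, so for a nonzero $x\in\X_0$ one may choose $f\in\X'$ with $f(x)=1$ and $f(\im A)=0$ (compose a Hahn--Banach functional on $\X_0$ with the bounded projection onto $\X_0$). Then $A'f=0$, hence $A\circ(x\otimes f)=0$ with $x\otimes f\in\mathcal{I}_1(\X)\subseteq\mathcal{I}_1^\pm(\X)$; the assumed zero-equivalence gives $B\circ(x\otimes f)=0$, and Lemma \ref{f(x)neq0_1} yields $Bx=0$. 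Note that only this direction is needed, so only the tripotent operator has to be algebraic. With this insertion your argument closes and coincides in substance with the paper's proof, which simply combines Corollary \ref{corr:J1} with (the argument of) Lemma \ref{corr:alg}; the choice of an idempotent test operator is available there for exactly the reason above.
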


\begin{proof}
Let us suppose that $A\in\mathcal{K}(\X)$. Then there exists a decomposition of $\X$ such that, with respect to a suitable basis, the operator $A$ admits the matrix representation $A=I\oplus-I\oplus0$. By combining Corollary \ref{corr:J1} and Lemma \ref{corr:alg}, we conclude that $B=A$.
\end{proof}\qed

By $J_k(\lambda)$ we denote a $k$-by-$k$ upper triangular Jordan block with scalar $\lambda$ on the main diagonal and scalar $1$ on the upper diagonal when $k>1$; all other entries are zero.

\begin{Lemma}\label{lem:Jk}  
Assume $n\geq2$ and let $A\in M_n$ be a matrix with only one nonzero spectral point. Suppose  that for some nonzero $\lambda \in\CC$ the matrix	$A \circ T_{k,R}$ is a nonzero idempotent for every matrix 
\begin{equation}\label{eq:Tj}
T_{k,R} = 
\begin{bmatrix}
J_k(\lambda)^{-1} & R \\
0 & 0
\end{bmatrix}\in M_n, \qquad k=1,2,\dots, n-1,
\end{equation}
where $R$ is any at most rank-one matrix. Then, if $e_k$ denote the standard basis vector in ${\CC}^n$ with a single nonzero entry $1$ in the $k$-th position, we have $Ae_k=J_n(\lambda)e_k$, $k=1,2,\dots, n-1$.	
\end{Lemma}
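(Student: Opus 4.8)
The plan is to feed the hypothesis into Lemma~\ref{lem:X} once for each admissible value of $k$. Fix $k\in\{1,\dots,n-1\}$ and split $\CC^n=\mathcal{Y}_k\oplus\mathcal{Z}_k$ with $\mathcal{Y}_k=\s\{e_1,\dots,e_k\}$ and $\mathcal{Z}_k=\s\{e_{k+1},\dots,e_n\}$. With respect to this splitting every $T_{k,R}$ is exactly of the form $\left[\begin{smallmatrix}X&R\\0&0\end{smallmatrix}\right]$ with $X=J_k(\lambda)^{-1}\neq 0$ (here $\lambda\neq0$ is used) and $R$ an arbitrary rank-at-most-one map $\mathcal{Z}_k\to\mathcal{Y}_k$. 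Writing $A=\left[\begin{smallmatrix}A_{11}^{(k)}&A_{12}^{(k)}\\A_{21}^{(k)}&A_{22}^{(k)}\end{smallmatrix}\right]$ in the same decomposition, Lemma~\ref{lem:X} yields $A_{21}^{(k)}=0$ and that $A_{11}^{(k)}\circ J_k(\lambda)^{-1}$ is a nonzero idempotent. Since $A_{21}^{(k)}=0$ for \emph{every} $k$, the matrix $A$ is upper triangular, so its spectrum equals the set of its diagonal entries; as this spectrum is (by hypothesis) a single nonzero point, all diagonal entries of $A$ coincide. Taking $k=1$, the $1\times1$ nonzero idempotent $A_{11}^{(1)}\circ J_1(\lambda)^{-1}=[\lambda^{-1}a_{11}]$ forces $a_{11}=\lambda$, so every diagonal entry of $A$ equals $\lambda$.

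With this in hand I would show $Ae_k=J_n(\lambda)e_k$ for $k=1,\dots,n-1$ by induction on $k$. The case $k=1$ is immediate from $A_{21}^{(1)}=0$ and $a_{11}=\lambda$. For the inductive step let $2\le k\le n-1$ and assume $Ae_j=J_n(\lambda)e_j$ for all $j<k$. Since $A_{21}^{(k)}=0$, the column $Ae_k$ lies in $\mathcal{Y}_k$; this, together with the inductive hypothesis and the upper triangularity of $A$, shows that in coordinates on $\mathcal{Y}_k$
$$
A_{11}^{(k)}=\begin{bmatrix}J_{k-1}(\lambda)&v\\0&\lambda\end{bmatrix},\qquad
J_k(\lambda)^{-1}=\begin{bmatrix}J_{k-1}(\lambda)^{-1}&w\\0&\lambda^{-1}\end{bmatrix},
$$
where $v\in\CC^{k-1}$ collects the first $k-1$ entries of $Ae_k$ and $w=-\lambda^{-1}J_{k-1}(\lambda)^{-1}e_{k-1}$. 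A short block computation then gives
$$
A_{11}^{(k)}\circ J_k(\lambda)^{-1}=\begin{bmatrix}I_{k-1}&p\\0&1\end{bmatrix},\qquad
p=\tfrac12\bigl[(J_{k-1}(\lambda)+\lambda I)w+(\lambda^{-1}I+J_{k-1}(\lambda)^{-1})v\bigr].
$$
Since $\left[\begin{smallmatrix}I&p\\0&1\end{smallmatrix}\right]^{2}=\left[\begin{smallmatrix}I&2p\\0&1\end{smallmatrix}\right]$, the idempotency provided by Lemma~\ref{lem:X} forces $p=0$. Now $\lambda^{-1}I+J_{k-1}(\lambda)^{-1}$ is invertible (upper triangular with diagonal $2\lambda^{-1}\neq0$), so $p=0$ determines $v$ uniquely, and using $(J_{k-1}(\lambda)+\lambda I)J_{k-1}(\lambda)^{-1}=I+\lambda J_{k-1}(\lambda)^{-1}$ one checks at once that $v=e_{k-1}$ is a solution. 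Hence $Ae_k=e_{k-1}+\lambda e_k=J_n(\lambda)e_k$, closing the induction.

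I expect the real content to be the step that pins every diagonal entry of $A$ to $\lambda$, which is the one place where the spectral hypothesis is indispensable: if the bottom-right entry of $A_{11}^{(k)}$ could be $0$ instead of $\lambda$, then $\left[\begin{smallmatrix}I&p\\0&0\end{smallmatrix}\right]$ is idempotent for \emph{every} $p$, the constraint $p=0$ disappears, and $v$ is no longer determined (without the hypothesis the conclusion genuinely fails, e.g.\ for $A=\lambda(e_1\otimes f)$ with $f(e_1)=1$, which satisfies all the other assumptions yet has $Ae_2=\lambda f(e_2)e_1\neq J_n(\lambda)e_2$). The remaining ingredients are routine: applying Lemma~\ref{lem:X} once for each $k$, keeping the nested block decompositions consistent as $k$ grows, and carrying out the triangular Jordan-block computation of $p$ together with the verification that $v=e_{k-1}$ is its unique zero.
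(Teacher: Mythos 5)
Your proof is correct and follows essentially the same route as the paper: induction on $k$, Lemma~\ref{lem:X} applied to the splitting $\s\{e_1,\dots,e_k\}\oplus\s\{e_{k+1},\dots,e_n\}$ to get $A_{21}^{(k)}=0$ and the idempotency of $A_{11}^{(k)}\circ J_k(\lambda)^{-1}$, and the single-spectral-point hypothesis to force every diagonal entry of $A$ to equal $\lambda$. The only difference is the final identification step: where the paper deduces $A_{11}\circ J_{k+1}(\lambda)^{-1}=I$ and invokes uniqueness of the solution of the Sylvester equation $A_{11}J_{k+1}(\lambda)^{-1}+J_{k+1}(\lambda)^{-1}A_{11}=2I$, you solve explicitly (and correctly) for the one unknown column $v$, a slightly more elementary but equivalent finish.
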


\begin{proof}
We apply induction on $k$, so let us begin with $k=1$. By Lemma \ref{lem:X} we get that $Ae_1=\lambda e_1=J_1(\lambda)e_1$. We are done if $n=2$. 
	
Assume now that $n>2$, and suppose the assertion holds true for some $1\le k<n-1$. By the inductive hypothesis, the matrix $A$ can be written as
$$
A=\begin{bmatrix}
A_{11}&A_{12}\\
A_{21}&A_{22}
\end{bmatrix},
\qquad\text{where}\qquad 
A_{11}=\begin{bmatrix}
    J_k(\lambda)&\ast\\
    0&\ast
\end{bmatrix}\in M_{k+1}.
$$
Our goal is to show that $A_{11}=J_{k+1}(\lambda)$ and $A_{21}=0$. To do this, apply Lemma \ref{lem:X} with $T_{k+1,R}$ in place of $T_R$. It follows that  $A_{21}=0$ and that $A_{11}\circ J_{k+1}(\lambda)^{-1}$ is a nonzero idempotent. Since $A$ is assumed to have a single spectral point, we obtain that $A_{11}$ is an upper-triangular matrix with all diagonal entries equal to $\lambda$. 

 Now, both $A_{11}$ and $J_{k+1}(\lambda)^{-1}$ are upper-triangular matrices with a single nonzero eigenvalue, hence $A_{11}\circ J_{k+1}(\lambda)^{-1}=I_{k+1}$. This equation can be rewritten as a Sylvester equation
$
A_{11} J_{k+1}(\lambda)^{-1}+J_{k+1}(\lambda)^{-1}A_{11}=2I_{k+1},
$
where $A_{11}$ is the unknown matrix. Since the spectra of  $J_{k+1}(\lambda)^{-1}$ and $-J_{k+1}(\lambda)^{-1}$ are disjoint, Theorem 2.4.4.1 from \cite{MA} ensures that this Sylvester equation has a unique solution. Therefore, $A_{11}=J_{k+1}(\lambda)$, completing the inductive step.
\end{proof}\qed

Let us introduce another algebraic family of operators, denoted by ${\mathcal T}_k(\lambda)\subseteq\bx$, $k<\dim\X$, consisting of all operators $A$ such that there exists a direct sum decomposition of $\X$ with respect to which $A$ can be represented in operator matrix form as $A=SJ_k(\lambda)S^{-1}\oplus0$ for some invertible $S\in M_k$. Further, for $n<\dim\X$ let  
\begin{equation}\label{T(n,lambda)}
\mathcal{T}(n;\lambda) := \bigcup_{k=1}^{n} \mathcal{T}_k(\lambda).    
\end{equation}

\begin{Lemma}\label{lem:Jn(lambda)}
Let ${\X}={\mathcal Y}\oplus\mathcal Z$, where $\mathcal Y$ is an $n$-dimensional subspace of $\X$ with $1\leq n<\dim\X$, and let $\lambda\in\mathbb{C}^\ast$. Suppose that $A,B\in\bx$ are algebraic operators such that
$$
A \circ X\in  {\mathcal I}^*({\X})\Leftrightarrow  B \circ X \in {\mathcal I}^*({\X})
\qquad\text{and}\qquad
A\circ X=0\Leftrightarrow B\circ X=0,
$$
for every $X \in \mathcal{F}_1({\X})\cup {\mathcal{T}}(n-1;\lambda^{-1})$. If with respect to the decomposition $\X={\mathcal Y}\oplus\mathcal Z$ we can represent the operator $A$ in an operator-matrix form as
$$
A=\begin{bmatrix}
J_n(\lambda)&A_{12}\\
0&A_{22}
\end{bmatrix},
$$
then $By=Ay$ for every $y\in \mathcal Y$.	
\end{Lemma}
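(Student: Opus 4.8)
The plan is to mimic the inductive structure of Lemma~\ref{lem:Jk}, but in the coordinate-free Banach-space setting, using the test family $\mathcal{T}(n-1;\lambda^{-1})$ together with rank-one operators. Write $\mathcal{Y}$ with the basis $e_1,\dots,e_n$ realizing the Jordan block, so $Ae_1=\lambda e_1$ and $Ae_k=\lambda e_k+e_{k-1}$ for $2\le k\le n$. First I would show $Be_1=\lambda e_1$: apply Corollary~\ref{corr:J1} with the scalar $\lambda$, since the hypothesis $A\circ X\in\mathcal{I}^*(\X)\Leftrightarrow B\circ X\in\mathcal{I}^*(\X)$ for all rank-one $X$ gives exactly the premise of that corollary, and $Ae_1=\lambda e_1$ yields $Be_1=\lambda e_1$.

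Next I would run an induction on $k$, showing that $Be_j=Ae_j$ for $j=1,\dots,k$ and, simultaneously, that $B$ has the block-triangular shape $\begin{bmatrix}J_k(\lambda)&\ast\\0&\ast\end{bmatrix}$ on $\s\{e_1,\dots,e_k\}\oplus(\text{complement})$. Given the inductive hypothesis, I would consider the operators $T_R\in\mathcal{T}_{k+1}(\lambda^{-1})$ that act as $J_{k+1}(\lambda)^{-1}$ on $\s\{e_1,\dots,e_{k+1}\}$ plus a rank-at-most-one perturbation mapping the complement into $\s\{e_1,\dots,e_{k+1}\}$, and zero elsewhere — these lie in $\mathcal{F}_1(\X)\cup\mathcal{T}(n-1;\lambda^{-1})$ since $k+1\le n-1$. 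For such $T_R$, Lemma~\ref{lem:X} (with $\mathcal{U}$ replaced by $\X$, $\mathcal{Y}=\s\{e_1,\dots,e_{k+1}\}$, $X=J_{k+1}(\lambda)^{-1}$) tells us $A\circ T_R$ is a nonzero idempotent; by the equivalence hypothesis, $B\circ T_R$ is then also a nonzero idempotent for every such $R$, and Lemma~\ref{lem:X} applied to $B$ forces the lower-left block of $B$ to vanish and $B_{11}\circ J_{k+1}(\lambda)^{-1}$ to be a nonzero idempotent. Then, exactly as in Lemma~\ref{lem:Jk}, I would argue that $B_{11}$ must have single spectral value $\lambda$ (this is the point where I need to know $B_{11}$ is algebraic with a single eigenvalue — it is algebraic since $B$ is, and the idempotency of $B_{11}\circ J_{k+1}(\lambda)^{-1}$ together with triangularity pins the spectrum), so $B_{11}\circ J_{k+1}(\lambda)^{-1}=I_{k+1}$, and the Sylvester-equation uniqueness argument gives $B_{11}=J_{k+1}(\lambda)$. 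This advances the induction to $k+1$.

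The induction terminates at $k=n-1$, at which point I know $Be_j=Ae_j=\lambda e_j+e_{j-1}$ for $j=1,\dots,n-1$ and that $B$ is block-triangular with $(n-1)\times(n-1)$ leading block $J_{n-1}(\lambda)$. It remains to recover $Be_n=Ae_n=\lambda e_n+e_{n-1}$, i.e.\ the last column. Here I can no longer use a $\mathcal{T}_n(\lambda^{-1})$ test operator because the family only goes up to $\mathcal{T}(n-1;\lambda^{-1})$; instead I would exploit that $B$ is algebraic. Knowing $\s\{e_1,\dots,e_{n-1}\}$ is $B$-invariant with $B$ acting as $J_{n-1}(\lambda)$ there, and combining with rank-one test operators of the form $x\otimes f$ with $f$ chosen to annihilate $\s\{e_1,\dots,e_{n-1}\}$, the zero-product equivalence $A\circ X=0\Leftrightarrow B\circ X=0$ should constrain how $B$ acts modulo $\mathcal{Y}$; together with Corollary~\ref{corr:J1}-type eigenvector matching applied to the quotient action, I expect to pin $Be_n$.

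I expect the last step --- recovering the final column $Be_n$ without a rank-$n$ test operator --- to be the main obstacle, since the given test family is deliberately one dimension short; the resolution should come from using the zero-Jordan-product equivalence on carefully chosen rank-one operators supported on $\s\{e_{n-1},e_n\}$ (so that $A\circ X=0$ detects $Ae_n=\lambda e_n+e_{n-1}$), possibly combined with the fact that $A$ and $B$ share spectrum and eigenstructure as algebraic operators. Everything else is a faithful Banach-space transcription of Lemma~\ref{lem:Jk}: the only genuinely new ingredients are verifying that the test operators $T_R$ actually lie in $\mathcal{F}_1(\X)\cup\mathcal{T}(n-1;\lambda^{-1})$ and that the algebraicity of $B$ transfers to the relevant compressions.
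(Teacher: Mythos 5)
Your treatment of the first $n-1$ columns is essentially the paper's own argument: the tests $T_{k,R}$ fed through Lemma \ref{lem:X}, upper-triangularity of the compression along the flag $\s\{e_1,\dots,e_k\}$, and the Sylvester uniqueness step are exactly how the paper proceeds (it packages the induction as Lemma \ref{lem:Jk}). Two corrections there. First, Lemma \ref{lem:X} does not ``tell us'' that $A\circ T_R$ is a nonzero idempotent; that must be checked directly from the block form of $A$ (it is immediate, $A\circ T_{k,R}=\left[\begin{smallmatrix} I_k & \ast\\ 0&0\end{smallmatrix}\right]$), and the lemma is used only in the reverse direction, on $B$. Second, idempotency of $B_{11}\circ J_{k+1}(\lambda)^{-1}$ together with triangularity does not pin the spectrum of $B_{11}$: it only gives that the new diagonal entry $\mu$ satisfies $\mu/\lambda\in\{0,1\}$. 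To exclude $\mu\neq\lambda$ (in particular $\mu=0$) you must argue as the paper does: an eigenvalue of the compression gives an eigenvector of $B$ inside the $B$-invariant subspace, hence by Corollary \ref{corr:J1} (if $\mu\neq0$) or Lemma \ref{corr:alg} (if $\mu=0$) an eigenvector of $A$ in $\mathcal{Y}$ with eigenvalue $\mu$, and $A|_{\mathcal{Y}}=J_n(\lambda)$ forces $\mu=\lambda$.

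The genuine gap is the last column. You leave $Be_n=\lambda e_n+e_{n-1}$ as an expectation, and the mechanism you propose cannot deliver it: for a rank-one $X=x\otimes f$ one has $A\circ X=0$ only if $Ax=\mu x$ and $A'f=-\mu f$ for some scalar $\mu$, and the only eigenvectors of $A$ inside $\mathcal{Y}$ are the multiples of $e_1$; hence no rank-one operator supported on $\s\{e_{n-1},e_n\}$ satisfies $A\circ X=0$, the zero-product equivalence is vacuous on that family, and it detects nothing about $Ae_n$. What is actually available, and what your sketch never exploits, is the nonzero-idempotency equivalence for rank-one nilpotents (Lemma \ref{f(x)=0}, Proposition \ref{prop:1234}): for example, for $n\geq4$ part (ii) of Proposition \ref{prop:1234} applied to $x=e_n$ gives $Be_n=\alpha e_n+Ae_n+\gamma A^2e_n$ and $BAe_n=\alpha Ae_n+A^2e_n+\gamma A^3e_n$, while the already proved identity $Be_{n-1}=Ae_{n-1}$ yields $BAe_n=\lambda Be_n+A^2e_n-\lambda Ae_n$; comparing the two expressions and using the independence of $e_n,Ae_n,A^2e_n,A^3e_n$ forces $\alpha=\gamma=0$, i.e.\ $Be_n=Ae_n$. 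The low-dimensional cases $n=2,3$ are not covered by this and need their own short arguments of the same flavour (again via Corollary \ref{corr:J1}, Lemma \ref{corr:alg} and rank-one tests). You are right that this is the delicate point --- the paper is extremely terse here, concluding by citing Lemma \ref{lem:Jk}, whose stated conclusion likewise stops at column $n-1$ --- but as written your proposal only establishes $Be_j=Ae_j$ for $j\leq n-1$ and therefore does not prove the lemma.
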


\begin{proof}
According to the decomposition ${\X}={\mathcal Y}\oplus\mathcal Z$, the operator $B$ can be written as 
$
B=\left[\begin{smallmatrix}
    B_{11} & B_{12} \\
    B_{21}& B_{22}
\end{smallmatrix}\right].
$
We aim to see that $B_{21}=0$ and $B_{11}=J_n(\lambda)$. To achieve this, consider the operators 
$$
T_{k,R}=\begin{bmatrix}
J_k(\lambda)^{-1} & R\\
0&0
\end{bmatrix}, \qquad\text{for } k=1,2,\ldots,n-1,
$$
where $R:{\mathcal Z}\rightarrow\mathcal Y$ is an arbitrary linear operator of rank at most one. Note that $T_{k,R}\in \mathcal{T}_k(\lambda^{-1})$ for every such $R$. Since $A\circ T_{k,R}\in \mathcal{I}^\ast(\X)$ for all such $R$, it follows from our assumption that also  $B\circ T_{k,R}\in \mathcal{I}^\ast(\X)$ for every rank-at-most-one $R$. Applying Lemma \ref{lem:X} we then have $B_{21}=0$ and $B_{11}\circ J_{k}(\lambda)^{-1}\in \mathcal{I}^\ast(\X)$. Since the only eigenvalue of $B_{11}$ is $\lambda$, otherwise we are in a contradiction with Corollary \ref{corr:J1} or Lemma \ref{corr:alg}, it follows that $B_{11}=J_n(\lambda)$ from Lemma \ref{lem:Jk}.
\qed\end{proof}

We now turn our attention to operators that can be represented as operator matrices $J_n(0)\oplus 0$, with respect to some basis in $\X$. In this context, nilpotent operators of rank one will play a significant role. Thus, we continue this section with a result that specifically addresses such operators in general. Note that the set ${\mathcal N}_1({\X})\subseteq\B(X)$ consists of all nilpotent operators of rank one, and by applying Lemma \ref{f(x)=0}, we deduce that for every $N\in {\mathcal N}_1(\X)$ we have $N\circ x\otimes f\in\ixa$ if and only if  $f(Nx)=2$.

\begin{Proposition}\label{prop:1234}
Let $A$, $B\in\bx$ be operators satisfying
$A \circ N\in  {\ixa}$ if and only if $B \circ N \in {\ixa}$, for every $N\in{\mathcal N}_1(\X)$. Then the following statements hold true.
\begin{enumerate}
    \item[(i)] 	For any $y\in \X$, such that the vectors $y$, $Ay$, $A^2y$  are linearly independent or, $y$, $Ay$ are linearly independent and $A^2y=0$, there exist scalars $\alpha$ and $\gamma$ (dependent on $y$) such that
        \begin{equation}\label{eq:12}
			By = \alpha y +Ay +\gamma A^2 y
	\end{equation}
    and $B^2y \in {\s}\left\{y,A^2y\right\}$.
    \item[(ii)] For any $x\in\X$ such that $x$, $Ax$, $A^2x$ and $A^3x$ are linearly independent or $x$, $Ax$, $A^2x$  are linearly independent and $A^3x=0$, there are sclalars $\alpha$ and $\gamma$ such that 
	\begin{equation}\label{eq:23}
		\begin{aligned} 
				Bx &= \alpha x +Ax +\gamma A^2 x, \\
				BAx &= \alpha Ax + A^2x + \gamma A^3 x,
		\end{aligned}
	\end{equation}
    and consequently, $BAx = ABx$. 
    \item[(iii)] For any $x\in\X$ such that $x$, $Ax$, $A^2x$, $A^3x$ and $A^4x$ are linearly independent, or, $x$, $Ax$, $A^2x$, $A^3x$ linearly independent and $A^4x=0$ we have $Bx=Ax$, $BAx=A^2x$ and $BA^2x=A^3x$.
\end{enumerate}
\end{Proposition}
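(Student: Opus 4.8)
\emph{Proof strategy.} All three parts will be obtained by probing the pair $(A,B)$ with rank-one nilpotents, and each part will feed into the next. Recall that a rank-one nilpotent has the form $N=v\otimes f$ with $v\in\X\backslash\{0\}$, $f\in\X'\backslash\{0\}$ and $f(v)=0$, and that by Lemma~\ref{f(x)=0} (applied with an arbitrary operator $C$ in the role of $A$) one has $C\circ(v\otimes f)\in\ixa$ if and only if $f(Cv)=2$ and $f(C^{2}v)=0$. Hence, whenever $v\in\X$ and a \emph{bounded} functional $f\in\X'$ satisfy $f(v)=0$, $f(Av)=2$ and $f(A^{2}v)=0$, the hypothesis forces $f(Bv)=2$ and $f(B^{2}v)=0$; this single implication, applied to well-chosen families of $f$'s, will do all the work.

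For part (i), fix $y$ as in the statement, put $W=\s\{y,Ay,A^{2}y\}$ (of dimension $2$ or $3$), and choose a bounded $f_{0}\in\X'$ with $f_{0}(y)=0$, $f_{0}(Ay)=2$, $f_{0}(A^{2}y)=0$ — possible by the independence hypothesis, the last condition being vacuous when $A^{2}y=0$. Every $f$ with these three values equals $f_{0}+g$ for some bounded $g$ vanishing on $W$, so the mechanism above gives $f_{0}(By)+g(By)=2$ and $f_{0}(B^{2}y)+g(B^{2}y)=0$ for all such $g$. Taking $g=0$ and subtracting yields $g(By)=g(B^{2}y)=0$ for every bounded $g$ with $g|_{W}=0$; since $W$ is finite-dimensional, hence norm-closed, Hahn--Banach gives $By,B^{2}y\in W$. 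Writing $By=ay+bAy+cA^{2}y$ we get $2=f_{0}(By)=2b$, so $b=1$, which is \eqref{eq:12}; writing $B^{2}y=a'y+b'Ay+c'A^{2}y$ we get $0=f_{0}(B^{2}y)=2b'$, so $b'=0$ and $B^{2}y\in\s\{y,A^{2}y\}$.

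For part (ii), observe that under its hypothesis each of $x$, $Ax$ and $v_{t}:=x+tAx$ (any $t\neq0$) satisfies the hypothesis of part (i): this is clear when $x,Ax,A^{2}x,A^{3}x$ are independent, and when $A^{3}x=0$ the vanishing of $A^{3}x$ handles all three vectors at once. Thus part (i) furnishes scalars with $Bx=\alpha x+Ax+\gamma A^{2}x$, $BAx=\alpha'Ax+A^{2}x+\gamma'A^{3}x$ and $Bv_{t}=\alpha_{t}v_{t}+Av_{t}+\gamma_{t}A^{2}v_{t}$. Substituting $Bv_{t}=Bx+tBAx$ along with $v_{t}=x+tAx$, $Av_{t}=Ax+tA^{2}x$, $A^{2}v_{t}=A^{2}x+tA^{3}x$, and comparing coefficients in the independent family $\{x,Ax,A^{2}x,A^{3}x\}$ (or $\{x,Ax,A^{2}x\}$ when $A^{3}x=0$), forces $\alpha'=\alpha$ and $\gamma'=\gamma$; this is \eqref{eq:23}, and then $ABx=A(\alpha x+Ax+\gamma A^{2}x)=\alpha Ax+A^{2}x+\gamma A^{3}x=BAx$.

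For part (iii), apply part (ii) to $x$ and to $Ax$ (legitimate since $x,Ax,A^{2}x,A^{3}x$, together with $A^{4}x$ in the non-truncated case, are independent). This yields $Bx=\alpha x+Ax+\gamma A^{2}x$, $BAx=\alpha Ax+A^{2}x+\gamma A^{3}x$ and, after matching the two resulting formulas for $BAx$, also $BA^{2}x=\alpha A^{2}x+A^{3}x+\gamma A^{4}x$ with one and the same pair $(\alpha,\gamma)$. Expanding $B^{2}x=\alpha Bx+BAx+\gamma BA^{2}x$ via these formulas, the coefficient of $Ax$ equals $2\alpha$ and that of $A^{3}x$ equals $2\gamma$; but $B^{2}x\in\s\{x,A^{2}x\}$ by part (i) and $x,Ax,A^{2}x,A^{3}x$ are independent, so $\alpha=\gamma=0$, and substituting back gives $Bx=Ax$, $BAx=A^{2}x$, $BA^{2}x=A^{3}x$. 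The one genuinely delicate step is the duality argument closing part (i) — deducing $By\in W$ from ``$g(By)=0$ for every bounded $g$ vanishing on $W$'' — which is valid in an arbitrary (possibly non-reflexive) Banach space precisely because $W$ is finite-dimensional, hence closed, so Hahn--Banach separates any vector outside $W$ from it; everything else is linear bookkeeping, where the main care is to keep the ``fully independent'' and ``truncated'' ($A^{k}x=0$) cases apart when checking that the auxiliary vectors $Ax$, $A^{2}x$ and $v_{t}$ meet the independence hypotheses of the part being invoked.
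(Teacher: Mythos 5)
Your proposal is correct and follows essentially the same route as the paper: probe with rank-one nilpotents $y\otimes f$ ($f(y)=0$, $f(Ay)=2$, $f(A^2y)=0$) and Lemma~\ref{f(x)=0} to locate $By$ and $B^2y$ in $\s\{y,Ay,A^2y\}$, then apply (i) to $x$, $Ax$ and $x+tAx$ to match coefficients, and finally expand $B^2x$ and use $B^2x\in\s\{x,A^2x\}$ to force $\alpha=\gamma=0$. The only differences are cosmetic: you phrase the containment step directly via Hahn--Banach rather than by contradiction, and you use a general $x+tAx$ where the paper takes $t=1$.
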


\begin{proof}
(i) We firstly observe that  $By$ and $B^2y \in {\s}\left\{y,Ay,A^2 y\right\}$. Otherwise, there exists a functional $f\in{\X}'$ such that $f(y)=f(A^2y)=0$, $f(Ay)=2$ and $f(By)\neq2$ or $f(B^2y)\neq0$ to have $A\circ y \otimes f\in {\ixa}$ and  $B\circ y \otimes f\notin {\ixa}$ by Lemma \ref{f(x)=0}, a contradiction. Therefore, $By= \alpha y + \beta Ay +\gamma A^2 y$ for some $\alpha,\beta,\gamma\in\CC$ and $B^2y= \delta y + \epsilon Ay +\theta A^2 y$ for some $\delta,\epsilon,\theta\in\CC$. Since one can choose a functional $g\in{\X}'$ such that $g(y)=g(A^2y)=0$ and $g(Ay)=2$ we get $g(By)=2\beta=2$  and $g(B^2y)=2\epsilon=0$, which further implies that $\beta=1$  and $\epsilon=0$ confirming \eqref{eq:12}.
	
To prove (ii), choose $y_1=x$, $y_2=Ax$ and $y_3=x+Ax$, successively and apply \eqref{eq:12} to obtain 
\begin{align}
    Bx &= \alpha_1 x + Ax +\gamma_1 A^2 x, \label{eq:101} \\
    BAx &= \alpha_2 Ax + A^2x + \gamma_2 A^3 x, \label{eq:102} \\
    B(x+Ax) &= \alpha_3 (x+Ax) +(Ax+ A^2x) +\gamma_3 (A^2x + A^3 x), \label{eq:103}
\end{align}
for some $\alpha_i,\gamma_i\in\CC$, $i=1,2,3$. Adding up \eqref{eq:101} and \eqref{eq:102}, and then comparing the coefficients with those in \eqref{eq:103}, provides $\alpha_1=\alpha_2=\alpha_3$ and $\gamma_1=\gamma_2=\gamma_3$, confirming \eqref{eq:23}. That $BAx=ABx$ is now clear.
	
For (iii), as $\left\{x,Ax,A^2x,A^3x\right\}$ and $\left\{Ax,A^2x,A^3x,A^4x\right\}$ met the conditions of (ii) we have
\begin{equation}\label{Novo}
	\begin{array}{l}
		\ \   Bx=\alpha_1x+Ax+\gamma_1A^2x\\
			BAx=\alpha_1Ax+A^2x+\gamma_1A^3x
	\end{array}
		\ \ \text{and}\ \ 
	\begin{array}{l}
			\ BAx=\alpha_2Ax+A^2x+\gamma_2A^3x\\
		BA^2x=\alpha_2A^2x+A^3x+\gamma_2A^4x\end{array}
\end{equation}
for some $\alpha_1,\alpha_2,\gamma_1,\gamma_2\in\CC$. Obviously, $\alpha_1=\alpha_2=:\alpha$ and $\gamma_1=\gamma_2=:\gamma$. Our goal is to show that $\alpha=0$ and $\gamma=0$. To do this, at first observe that $B^2x \in {\s}\left\{x,A^2x\right\}$ from (i). On the other hand, acting by $B$ on $Bx=\alpha x+Ax+\gamma A^2x$ and applying (\ref{Novo}) gives
\begin{align*}
    B^2x &= \alpha Bx +BAx +\gamma BA^2x\\
    & = \alpha(\alpha x +Ax +\gamma A^2 x)+(\alpha Ax + A^2x + \gamma A^3 x) + \gamma(\alpha A^2x +A^3x +\gamma A^4 x)\\
    & = \alpha^2x + 2\alpha Ax +(1+2\alpha\gamma)A^2x + 2\gamma A^3x +\gamma^2 A^4 x.
\end{align*}
Hence, $\alpha=\gamma=0$ and in turn,
$Bx=Ax$, $BAx=A^2x$ and, $BA^2x=A^3x$ according to (\ref{Novo}).
\qed\end{proof}

\begin{Lemma}\label{lem:Jn(0)}
Let ${\X}={\mathcal Y}\oplus\mathcal Z$, where $\mathcal Y$ is a $n$-dimensional subspace of $\X$ with $1\leq n<\dim\X$. Suppose that $A,B\in\bx$ are algebraic operators such that
$$
A \circ X\in  {\mathcal I}^*({\X})\Leftrightarrow  B \circ X \in {\mathcal I}^*({\X})
\qquad\text{and}\qquad
A\circ X=0\Leftrightarrow B\circ X=0,
$$
for every $X \in {\mathcal F}_1(\X)\cup{\mathcal K}(\X)$. If with respect to the decomposition $\X={\mathcal Y}\oplus\mathcal Z$ we can represent operator $A$  as
\begin{equation}\label{eq:AinJ(0)}
A=\begin{bmatrix}
J_n(0)&A_{12}\\
0&A_{22}
\end{bmatrix},    
\end{equation}
then $By=Ay$ for every $y\in \mathcal Y$.	
\end{Lemma}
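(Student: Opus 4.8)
The plan is to prove Lemma \ref{lem:Jn(0)} by essentially the same two-stage strategy used for Lemma \ref{lem:Jn(lambda)}: first determine the action of $B$ on the cyclic subspace $\mathcal Y$ generated by $J_n(0)$ using the probing operators that live in $\mathcal{K}(\X)$ (here rank-one nilpotents and tripotents built from them), and then invoke uniqueness. Write $B=\left[\begin{smallmatrix}B_{11}&B_{12}\\B_{21}&B_{22}\end{smallmatrix}\right]$ with respect to $\X=\mathcal Y\oplus\mathcal Z$; the goal is $B_{21}=0$ and $B_{11}=J_n(0)$, which is exactly $By=Ay$ for all $y\in\mathcal Y$.

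First I would exploit Proposition \ref{prop:1234}. Set $x=e_1$, the first standard basis vector of $\mathcal Y$, so that $A^jx=e_{j+1}$ for $j\le n-1$ and $A^nx=0$. For $n\ge5$ the vectors $x,Ax,A^2x,A^3x$ are linearly independent with $A^4x$ independent of them (if $n\ge5$) or zero (if $n=5$), so part (iii) of Proposition \ref{prop:1234} applies directly with the operator $A$ restricted appropriately, giving $Bx=Ax$, $BAx=A^2x$, $BA^2x=A^3x$; iterating along the chain (shifting the starting vector to $e_2,e_3,\dots$) yields $Be_k=e_{k+1}=Ae_k$ for $k=1,\dots,n-1$ and then $Be_n$ can be pinned down separately, whence $B_{11}=J_n(0)$ and $B_{21}=0$. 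The honest obstacle is the small cases $n=1,2,3,4$ where part (iii) of Proposition \ref{prop:1234} has no foothold because the required four-term independence fails; these must be handled by hand, using part (i) or (ii) together with the extra information that $A$ acts nilpotently on all of $\mathcal Y$ and, crucially, with rank-one nilpotent probes $N$ chosen so that $f$ annihilates $\mathcal Y$-vectors appropriately (recall $N\circ x\otimes f\in\ixa$ iff $f(Nx)=2$, and the zero-product condition via Lemma \ref{f(x)neq0_1} and Lemma \ref{f(x)=0}). For instance, for $n=1$ the operator $A|_{\mathcal Y}=0$, and one shows $B_{21}=0$ and $B_{11}=0$ directly from the zero-product condition applied to suitable $x\otimes f$ with $x\in\mathcal Y$; for $n=2,3,4$ one combines \eqref{eq:12} (which already forces $By\in\s\{y,Ay,A^2y\}$ with the coefficient of $Ay$ equal to $1$) with a dimension count showing the remaining coefficients $\alpha,\gamma$ must vanish once one also uses that $By$ cannot leave $\mathcal Y$ — this last fact coming from choosing nilpotent probes supported transversally to $\mathcal Y$.

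The second stage is the uniqueness argument. Once $B_{21}=0$ and $B_{11}=J_n(0)$ are established, we know $B$ has the block form $\left[\begin{smallmatrix}J_n(0)&B_{12}\\0&B_{22}\end{smallmatrix}\right]$ agreeing with $A$ on the top-left block, hence $By=Ay$ for every $y\in\mathcal Y$, which is exactly the assertion. No Sylvester-equation step is needed here, unlike in Lemma \ref{lem:Jn(lambda)}, because the nilpotent Jordan block is rigid enough that Proposition \ref{prop:1234} already delivers $B_{11}=J_n(0)$ on the nose rather than just up to commuting corrections.

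The step I expect to be the main obstacle is precisely the low-dimensional bookkeeping ($n\le4$): Proposition \ref{prop:1234}(iii) is the clean tool but it is unavailable there, so one must argue directly that (a) $B$ maps $\mathcal Y$ into itself and (b) $B|_{\mathcal Y}=J_n(0)$, using only rank-one nilpotent and tripotent test operators. Claim (a) follows by picking, for each $y\in\mathcal Y$, a nilpotent $N=y'\otimes f$ with $f$ vanishing on $\mathcal Y$ but $f(Ny)=f(By)$ detecting any component of $By$ outside $\mathcal Y$ — carefully chosen so that $A\circ N$ is (or is not) a nonzero idempotent and the hypothesis transfers the condition to $B\circ N$; claim (b) is then a finite linear-algebra computation inside $\mathcal Y$ using \eqref{eq:12}. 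I would expect this casework to be somewhat tedious but entirely routine once the right probing functionals are identified.
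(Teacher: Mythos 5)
Your overall strategy (long Jordan chains handled by Proposition \ref{prop:1234}(iii), short blocks treated separately) is indeed the paper's skeleton, but the place you dismiss as ``tedious but entirely routine'' bookkeeping, namely $n=3$, is precisely the heart of the paper's proof, and your sketch for it would fail. For $n=3$, Proposition \ref{prop:1234}(ii), Corollary \ref{corr:alg} and nilpotency of $B_{11}$ only give $Be_3=e_2+\gamma e_1$, $Be_2=e_1$, $Be_1=0$, i.e.\ $B$ agrees on $\mathcal Y$ with $A+\gamma A^2$ rather than with $A$. This candidate already maps $\mathcal Y$ into itself, so your proposed fix (``$By$ cannot leave $\mathcal Y$'', detected by nilpotent probes transversal to $\mathcal Y$, plus a dimension count) cannot eliminate $\gamma$. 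Worse, rank-one probes are provably insufficient: if, say, $\dim\X=3$ and $A=J_3(0)$, then $B=A+\gamma A^2$ satisfies $A\circ X\in\ixa\Leftrightarrow B\circ X\in\ixa$ and $A\circ X=0\Leftrightarrow B\circ X=0$ for \emph{every} rank-one $X=x\otimes f$: for $f(x)\neq0$ neither product is ever a nonzero idempotent (Lemma \ref{f(x)neq0} would force a nonzero eigenvalue, but $A,B$ are nilpotent) nor zero (Lemma \ref{f(x)neq0_1}, since $\ker B=\ker A$ and $\ker B'=\ker A'$), while for $f(x)=0$ Lemma \ref{f(x)=0} gives the conditions $f(Ax)=2$, $f(A^2x)=0$, which are unchanged under $A\mapsto A+\gamma A^2$ because $A^3=0$. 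So $\gamma$ can only be killed by exploiting either the action of $A$ outside $\mathcal Y$ or the tripotent test operators in $\mathcal K(\X)$, and that is exactly what the paper does: if $A^3\neq0$ it finds an eigenvector $z$ for a nonzero eigenvalue or a length-four nilpotent chain $w,Aw,A^2w,A^3w$ and applies Proposition \ref{prop:1234}(ii)--(iii) to $e_3+z$, resp.\ $w$ and $e_3+w$; if $A^3=0$ it tests against the tripotent $D=\mathrm{diag}(1,-1,1)$ when $\dim\X=3$, and against a carefully built tripotent $K$ whose off-diagonal block involves $A_{12}$ when $\dim\X>3$. None of this appears in your proposal; tripotents are mentioned in passing but never actually used, so the argument has a genuine gap exactly where the hypothesis $X\in\mathcal K(\X)$ is indispensable.

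Two smaller points. Your indexing is reversed: with $A_{11}=J_n(0)$ upper triangular the cyclic vector is $e_n$ (so $A^je_n=e_{n-j}$), not $e_1$; and your ``shift the starting vector along the chain'' iteration degenerates near the bottom of the chain, which the paper avoids by applying Proposition \ref{prop:1234}(iii) to $e_n$ and to the perturbed vectors $e_j+e_n$, a device that also covers $n=4$ (so $n=4$ need not be a hand case at all). The cases $n=1,2$ are indeed quick, via Corollary \ref{corr:alg} and Proposition \ref{prop:1234}(i) together with nilpotency of $B_{11}$, much as you indicate.
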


\begin{proof}
Let us denote by $e_1,e_2,\ldots,e_n$ the basis of $\mathcal Y$ in which the operator $A$ is represented as in (\ref{eq:AinJ(0)}). We then write $B$ in matrix for as
$
B=\left[\begin{smallmatrix}
    B_{11} & B_{12} \\
    B_{21}& B_{22}
\end{smallmatrix}\right].
$
Our goal is to show that $B_{11}=J_n(0)$ and $B_{21}=0$. Before proving this, observe that if $B_{21}=0$, then $B_{11}$ must be nilpotent; otherwise, we arrive at a contradiction with Lemma \ref{corr:alg}. We now begin with the proof.

For $n=1$, the claim follows directly from Corollary \ref{corr:alg}. So, let further $n=2$. Since $e_2$ and $Ae_2$ are linearly independent and $A^2e_2=0$, property (i) of Proposition \ref{prop:1234} ensures that $Be_2 =\alpha_0 e_2 +Ae_2$ for some scalar $\alpha_0$, which implies that $B_{21}=0$. Since $B_{11}$ is now nilpotent, we conclude that $\alpha_0=0$, and hence, $Be_2=Ae_2$. Combined with $Be_1=Ae_1$ we are done.

Next, consider the case $n\geq4$. We firstly use the fact that $\{e_n,Ae_n,A^2e_n,A^3e_n\}$ is linearly independent set and $A^4e_n=0$ or the set $\{e_n,Ae_n,A^2e_n,A^3e_n,A^4e_n\}$ is linearly independent. By property (iii) of Proposition \ref{prop:1234} we then have $Be_n=Ae_n$. Moreover, $\{e_j+e_n, A(e_j+e_n), A^2(e_j+e_n), A^3(e_j+e_n), A^4(e_j+e_n)\}$ is a linearly independent set for all $j=1,2,\ldots,n-1$. Thus, by the same property, $A(e_j+e_n)=B(e_j+e_n)$, and hence $Ae_j=Be_j$ for all $j=1,2,\ldots,n-1$, so that $Ay=By$ for every $y\in \mathcal{Y}$.

It remains to consider the case when $n=3$. In this case, we have $Ae_3=e_2$, $A^2e_3=e_1$ and $A^3e_3=0$. By property (ii) of  Proposition \ref{prop:1234} we find that $Be_3=\alpha e_3 +Ae_3 +\gamma A^2e_3$ and $BAe_3=\alpha Ae_3 + A^2e_3$ which can be simplified to
$$
Be_3=\alpha e_3+e_2+\gamma e_1
\qquad\text{and}\qquad
Be_2=\alpha e_2+e_1
$$
for some complex numbers $\alpha$ and $\gamma$. From Corollary \ref{corr:alg}  we also have $Be_1=0$ since $Ae_1=0$. Therefore, $B_{21}=0$ and since $B_{11}$ must be nilpotent, it follows that $\alpha=0$. To sum up,
\begin{equation}\label{eq:BinJ(0)}
B=\begin{bmatrix}
B_{11}&B_{12}&\\
0&B_{22}
\end{bmatrix},
\qquad\text{where}\qquad  
B_{11}=\begin{bmatrix}
   0&1&\gamma \\
   0&0&1\\
   0&0&0
\end{bmatrix}.
\end{equation}
We want to see that $\gamma=0$. To reach a contradiction, let us assume on the contrary that $\gamma\neq0$. 

Let us firstly suppose that $A^3\neq 0$.  If there exists a nonzero $\lambda\in\CC$ in the spectrum of $A$, with corresponding eigenvector $z$, we have $Az=\lambda z$ and also, by Corollary \ref{corr:J1}, $Bz=\lambda z=Az$. As $z\notin\{e_1,e_2,e_3\}$, the set 
$$
\{e_3+z,A(e_3+z),A^2(e_3+z),A^3(e_3+z)\}=\{e_3+z,e_2+\lambda z,e_1+\lambda^2 z,\lambda^3 z\}
$$
is linearly independent. 
Then, by property (ii) of Proposition \ref{prop:1234}, we have 
$$
B(e_3+z)=\alpha_1(e_3+z)+A(e_3+z)+\gamma_1 A^2(e_3+z),
$$ 
for some $\alpha_1,\gamma_1\in\CC$. Substituting $Be_3=e_2+\gamma e_1$, $Ae_3=e_2$, $A^2e_3=e_1$ and $Bz=\lambda z$, we obtain that $0=(\gamma_1-\gamma)e_1+\alpha_1 e_3+(\alpha_1+\gamma_1\lambda^2)z$. As $e_1,e_3,z$ are linearly independent, it follows that $\gamma=0$, a contradiction. Therefore, the spectrum of $A$ equals $\{0\}$, and since $A$ is algebraic, $A$ is nilpotent of nilindex $r\geq4$. Hence, there exists a linearly independent four-tuple $w, Aw, A^2w, A^3w$ with $A^4w=0$. We can take $w=A^{r-4}u$, for an existing linearly independent set $\{u, Au,\dots, A^{r-1}u\}\subseteq \bx$. Then, by (iii) of Proposition \ref{prop:1234}, we have
\begin{equation}\label{prop:Bw=Aw}
Bw=Aw,\qquad BAw=A^2w\qquad\text{and}\qquad BA^2w=A^3w.
\end{equation}
Suppose that for some $y\in \mathcal{Y}$ we have
$
y=\beta_0 w +\beta_1 A w+ \beta_2 A^2 w+\beta_3 A^3w,
$
for some $\beta_0,\beta_1,\beta_2,\beta_3\in\CC$. Then
\begin{align*}
    Ay&=\beta_0 Aw +\beta_1 A^2 w+ \beta_2 A^3 w,\\
    By&=\beta_0 Bw +\beta_1 BA w+ \beta_2 BA^2 w+\beta_3BA^3w.
\end{align*}
Inserting properties from (\ref{prop:Bw=Aw}) into linear combination of $By$ we get  $By=Ay+\beta_3 BA^3 w$. But, by Corollary \ref{corr:alg}, we have $\ker A=\ker B$ and, since $A(A^3w)=A^4w=0$ we see that $A^3w\in \ker A$. Consequently, $A^3w\in\ker B$ and hence, $By=Ay$. However, as $\gamma\neq0$, it is evident that $Ae_3\neq Be_3$ and so, $ {\s}\{w,Aw,A^2w,A^3w\}\cap\mathcal{Y}=\{0\}$. Now, consider the vector $e_3+w$. Clearly, $A^4(e_3+w)=0$ and $e_3+w, A(e_3+w),A^2(e_3+w),A^4(e_3+w)$ are linearly independent vectors. Thus,  by (iii) of Proposition \ref{prop:1234}, it follows that $A(e_3+w)=B(e_3+w)$, a contradiction since $Aw=Bw$ and $Ae_3\neq Be_3$. 

Therefore, $A^3=0$. If $\dim\X=3$, take the matrix
$$
D=\begin{bmatrix}
1&0&0 \\
0&-1&0\\
0&0&1
\end{bmatrix}\in{\mathcal K}(\X),
$$ 
so that $A\circ D=0$ while $B\circ D\neq0$, again a contradiction. Further, we may suppose that $\dim \X>3$. Let us write $A_{12}$ introduced in (\ref{eq:AinJ(0)}) as $A_{12}=e_1\otimes f +e_2\otimes g+e_3\otimes h$ for some bounded functionals $f,g,h$ on $\X$ satisfying $f(\mathcal{Y})=g(\mathcal{Y})=h(\mathcal{Y})=0$. It is an elementary exercise to check that $A^3=0$ if and only if $A_{22}^3=0$ and $h+A'g+A'^2f=0$ (the latter identity multiplied by $A'$ and $A'^2$, respectively, also gives $A'h+A'^2g=A'^2h=0$ as $ A'^3=0$). Let us define an operator $K\in \bx$ by
$$K=\begin{bmatrix}
    D & X\\
    0 & 0
\end{bmatrix},\qquad\text{where } X=-e_2\otimes f + e_3 \otimes (g+A'f).$$
It is easy to check that $K^3=K$, so $K\in \mathcal{K}(\X)$. Now, by applying \eqref{eq:BinJ(0)} we obtain
$A\circ K =0$
while $(B\circ K) e_3\neq0$, a final contradiction.

As a result we get that $\gamma=0$ in (\ref{eq:BinJ(0)}) and thus, $B_{11}=J_3(0)$, providing that $Ay=By$ for all $y\in\mathcal Y$.
\qed\end{proof}

We close this section with a characterization of when two operators coincide in terms of the Jordan product $\circ$. Let 
$$
{\mathcal T}= \mathcal{N}_1(\X) \cup \mathcal{K}(\X) 
\bigcup_{\lambda \in \mathbb{C}^\ast} 
\left\{
\begin{array}{ll}
\mathcal{T}(3;\lambda) & \text{if } \dim{\X}\geq4\\
\mathcal{T}(2;\lambda) & \text{if }  \dim \X = 3
\end{array}
\right. ,
$$
and note that $\mathcal T$ is  a similarity-invariant set, i.e. for every invertible $S\in \bx$ we have that $STS^{-1}\in \mathcal T$ for every $T\in \mathcal T$.

\begin{Lemma}\label{lem:operator}
Let $A,B\in \bx$. Then the following statements are equivalent.
\begin{enumerate}
	\item[(i)] $A=B$
	\item[(ii)] $A\circ X\in {\ix} \Leftrightarrow B\circ X\in {\ix}$, for every $X\in \bx$.
	\item[(iii)] $A\circ X\in {\ix} \Leftrightarrow B\circ X\in {\ix}$, for every $X\in \mathcal T$.
    
\end{enumerate}
\end{Lemma}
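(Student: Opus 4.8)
The implications (i)$\Rightarrow$(ii)$\Rightarrow$(iii) are immediate, so the whole content is (iii)$\Rightarrow$(i), and the plan is to reduce the hypothesis in (iii) — stated only for the small similarity-invariant test class $\mathcal{T}$ — to enough information to force $A=B$ on every vector of $\X$. First I would upgrade (iii) from ``$A\circ X$ idempotent $\iff$ $B\circ X$ idempotent'' to the two sharper conditions
$$
A\circ X\in{\ixa}\iff B\circ X\in{\ixa}
\qquad\text{and}\qquad
A\circ X=0\iff B\circ X=0,
$$
valid for all $X\in\mathcal{T}$ (and hence, by the similarity-invariance of $\mathcal{T}$, for all $SXS^{-1}$ as well). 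This upgrade is obtained by testing against $X$ and against $-X$ whenever $-X$ also lies in $\mathcal{T}$ — which it does for $X\in\mathcal{K}(\X)$, for $X\in\mathcal{N}_1(\X)$, and for $X\in\mathcal{T}(n;\lambda)$ since $-J_k(\lambda)$ is similar to $J_k(-\lambda)$ — together with Observation \ref{Observation}. Once we have the two split conditions for the rank-one nilpotents $\mathcal{N}_1(\X)\subseteq\mathcal{T}$ and for $\mathcal{K}(\X)\subseteq\mathcal{T}$ we may freely invoke Corollary \ref{corr:J1}, Lemma \ref{corr:alg}, Corollary \ref{corr:D}, Proposition \ref{prop:1234}, Lemma \ref{lem:Jn(lambda)} and Lemma \ref{lem:Jn(0)}, all of whose hypotheses are exactly of this form.

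The core of the argument is then a dichotomy on $A$. Fix an arbitrary nonzero $x\in\X$ and let $\mathcal{Y}_x$ be the $A$-cyclic subspace generated by $x$; since we only need the conclusion $Bx=Ax$ for each fixed $x$, it suffices to work inside a finite-dimensional $A$-invariant subspace containing $x$, on which $A$ restricts to an algebraic operator (this is where the hypothesis that we test against the \emph{algebraic} building blocks in $\mathcal{T}$, rather than against all of $\bx$, is enough). Decompose that restriction into its generalized eigenspaces. On a generalized eigenspace for a nonzero eigenvalue $\lambda$, choose a basis putting the block in Jordan form $J_n(\lambda)$; since $n\le 3$ when $\dim\X\ge4$ and $n\le 2$ when $\dim\X=3$ (a single cyclic vector produces a block of size at most the chosen bound, and if the block is larger we peel off $\mathcal{T}(3;\lambda)$- or $\mathcal{T}(2;\lambda)$-sized corners one at a time), Lemma \ref{lem:Jn(lambda)} applies and yields $By=Ay$ on that block. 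On the generalized eigenspace for eigenvalue $0$, the same peeling plus Lemma \ref{lem:Jn(0)} gives $By=Ay$ there. For the eigenvalue-$0$, rank-one situation and for isolated eigenvectors, Corollary \ref{corr:J1} and Lemma \ref{corr:alg} handle the $1$-dimensional blocks directly. Summing over the blocks gives $By=Ay$ for every $y$ in the cyclic subspace, in particular $Bx=Ax$; as $x$ was arbitrary, $A=B$.

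The main obstacle is bookkeeping the reduction to the size-bounded Jordan corners: Lemmas \ref{lem:Jn(lambda)} and \ref{lem:Jn(0)} are phrased for a block of \emph{arbitrary} size $n$ sitting in the upper-left corner relative to a splitting $\X=\mathcal{Y}\oplus\mathcal{Z}$, but the test operators they consume, $\mathcal{T}(n-1;\lambda^{-1})$ or $\mathcal{K}(\X)$, must lie in $\mathcal{T}$, and $\mathcal{T}$ only contains $\mathcal{T}(\le3;\cdot)$. So for a Jordan block of size $n\ge4$ one cannot apply those lemmas in one shot; instead I would argue by an ascending induction on the size of the corner already pinned down, at each stage enlarging $\mathcal{Y}$ by one basis vector and applying Lemma \ref{lem:X} (which only needs a single rank-one test operator $T_R\in\mathcal{T}_k(\lambda^{-1})$ or $\mathcal{T}_k(0)$, hence in $\mathcal{T}$ once $k\le 3$) followed by the Sylvester-uniqueness step as in the proof of Lemma \ref{lem:Jk}. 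Checking that at every stage the required test operator genuinely lies in the \emph{fixed} class $\mathcal{T}$ — using its similarity-invariance to conjugate the standard Jordan corners into the ambient coordinates — is the one place where care is needed; everything else is a direct citation of the preliminary lemmas.
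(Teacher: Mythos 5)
Your reduction to the test class $\mathcal T$ via Observation \ref{Observation} and the negation-invariance of $\mathcal T$ is fine and matches the paper, but the core of your plan has two genuine gaps. First, the passage ``it suffices to work inside a finite-dimensional $A$-invariant subspace containing $x$, on which $A$ restricts to an algebraic operator'' is not available in general: if $A$ is not algebraic, the cyclic subspace of $x$ may be infinite-dimensional, and, more importantly, Lemmas \ref{lem:Jn(lambda)}, \ref{lem:Jn(0)} and Lemma \ref{corr:alg} require $A$ \emph{and} $B$ to be algebraic operators on all of $\X$ (their proofs use global spectral information, e.g.\ that $0$ is an eigenvalue of $A'$), so you cannot invoke them merely because the corner of $A$ you are looking at is a finite matrix; you never verify algebraicity of $B$ at all. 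The paper therefore splits into two cases: when $A$ (or $B$) is \emph{not} algebraic it avoids those lemmas entirely and gets $Ax=Bx$ from Proposition \ref{prop:1234}(iii) alone, using an auxiliary vector $z$ with $z,Az,\dots,A^4z$ independent and testing $x$, $z$ and $x+z$; only when both $A$ and $B$ are algebraic does it use the Jordan-corner lemmas.

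Second, your ``peeling'' device for Jordan blocks of size $n\ge 4$ does not work with the fixed test class: the inductive step you borrow from Lemma \ref{lem:Jk}/Lemma \ref{lem:X} needs, at stage $k$, a test operator with corner $J_k(\lambda)^{-1}$, i.e.\ an element of $\mathcal T_k(\lambda^{-1})$, and for $k\ge 4$ these lie outside $\mathcal T$ (which only contains $\mathcal{T}(3;\cdot)$, resp.\ $\mathcal{T}(2;\cdot)$ when $\dim\X=3$). So your induction stalls at corner size $4$ and cannot pin down the large blocks that a cyclic subspace of a high-degree algebraic $A$ may contain. The paper sidesteps this by never working with the full cyclic subspace: whenever $x,Ax,A^2x,A^3x,A^4x$ are linearly independent (or the first four are independent and $A^4x=0$), Proposition \ref{prop:1234}(iii) already gives $Bx=Ax$; in the remaining case $E_A(x)=\s\{x,Ax,\dots,A^4x\}$ is $A$-invariant of dimension $m\le 4$, its corner is nonderogatory, so every Jordan block has size at most $4$ and Lemmas \ref{lem:Jn(lambda)} and \ref{lem:Jn(0)} apply as stated. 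Using Proposition \ref{prop:1234}(iii) to cap the corner size (rather than decomposing the whole cyclic subspace) is the missing idea; without it, your argument fails for blocks of size $\ge 5$ and for non-algebraic $A$ or $B$.
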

		
\begin{proof}
The implications (i)$\Rightarrow$(ii)$\Rightarrow$(iii) are straightforward. So, assume that (iii) holds true and proceed to show that this implies $A=B$. By Observation \ref{Observation} note that
\begin{equation}\label{eq:relation}
A\circ X\in{\ixa}\Leftrightarrow B\circ X\in {\ixa}
\quad\text{and}\quad
A\circ X=0\Leftrightarrow B\circ X=0,
\end{equation}
for every $X \in  \mathcal T$. In the following, we will distinguish two cases, depending on whether $A$ and $B$ are algebraic.

At first, we suppose that $A$ is not algebraic. Choose an arbitrary $x\in \mathcal{X}$. If  the set $\{x,Ax,A^2x,A^3x,A^4x\}$ is linearly independent, then $Ax=Bx$ by (iii) of Proposition \ref{prop:1234}. If this is not the case, let $\mathcal{X}=\mathrm{span}\{x,Ax,A^2x,A^3x,A^4x\}\oplus \mathcal{Z}$ and write 
$$A=\begin{bmatrix}
    A_{11} & A_{12} \\
    0 & A_{22}
    \end{bmatrix}
$$
with respect to this decomposition. Note that $A_{22}$ is not algebraic. So, there exists a $z\in \mathcal{Z}$ such that $z,A_{22}z,A_{22}^2z,A_{22}^3z,A_{22}^4z$ are linearly independent. It is then easy to see that both $\{z,Az,A^2z,A^3z,A^4z\}$ and $\{x+z,A(x+z),A^2(x+z),A^3(x+z),A^4(x+z)\}$ are linearly independent as well. By (iii) of Proposition \ref{prop:1234} it follows that $Az=Bz$ and $A(x+z)=B(x+z)$ providing $Ax=Bx$.

The same conclusion follows if $B$ is not algebraic.

Suppose further that both $A$ and $B$ are algebraic operators. To reach a contradiction, assume that $A\neq B$. Then there exists a vector $x\in\X$ such that $Ax \neq Bx$. As at least one of $Ax$ and $Bx$ is nonzero, we may and we do suppose that $Ax\neq0$.

If $x$, $Ax,\dots$, $A^4x$ are linearly independent or, $x$, $Ax$, $A^2x$, $A^3x$ are linearly independent and $A^4x=0$,  we arrive at a contradiction by (iii) of Proposition \ref{prop:1234}. Hence, for
$$
E_A(x) = {\s}\left\{x,Ax,A^2x,A^3x,A^4x\right\}
$$
we have  $1\leq m:=\dim E_A(x)\le 4$.   It is easy to see that $\left\{x,Ax,\ldots, A^{m-1}x\right\}$ is a basis of $E_A(x)$ and  $E_A(x)$ is a $m$-dimensional  $A$-invariant subspace of $\mathcal{X}$. Let us split $\X=E_A(x)\oplus \mathcal{Z}$. With respect to this decomposition, the operator $A$ can be represented by
$$
A=\begin{bmatrix}
    A_{11} & \ast \\
    0 & \ast
\end{bmatrix},
$$
where $A_{11}$ is a $m\times m$ complex matrix (recall that $m=\dim E_A(x)$). There is no loss of generality in assuming that $A_{11}$ is in Jordan normal form. Because the vectors $x, Ax, \dots, A^{m-1}x$ are linearly independent, the minimal polynomial of $A_{11}$ is equal to its characteristic polynomial. Thus, $A_{11}$ is nonderogatory, i.e. it has exactly one Jordan block per eigenvalue, and all eigenvalues are distinct.  Now, since $A_{11}$ is a direct sum of distinct Jordan blocks $J_{n_k}(\lambda_k)$, where $1\le n_k\le m\le4$, apply Lemma \ref{lem:Jn(lambda)} if $\lambda_k\neq0$ and  Lemma \ref{lem:Jn(0)} otherwise, to finally conclude that $Ay=By$ for every $y\in E_A(x)$. But then also $Ax=Bx$, a contradiction. Therefore, $A=B$ as desired. 
\end{proof}\qed

%%%%%%%%%%%%%%%%%%%%%%%%%%%%%%%%%%%%5
\section{Proofs of the main results}

%%%%%%%%%%%%%%%%%%%%%%%%%%%%%%
%%%%%%%%%%%%%%%%%% P R O O F S
%%%%%%%%%%%%%%%%%%%%%%%%%%%%%%
		
We are now ready to start simultaneously proving Theorems \ref{main_result} and \ref{main_result_matrix}.\\
		
\noindent\textbf{Proof of Theorems \ref{main_result} and \ref{main_result_matrix}.} Let us assume that $\phi:{\bx}\rightarrow\bx$ is a  map such that   ${\mathcal I}^\pm(\X)$ is a subset of the range of $\phi$. Further suppose that $A\circ B$ is idempotent if and only if $\phi(A)\circ\phi(B)$ is idempotent for any  $A$ and $B\in\bx$. The proof will be set up through several steps.\\

%%%%%%% STEP 1 : \phi(0)=0 ZADOŠČA, DA SLIKA VSEBUJE I_1 IN -I_1.
{\sc{Step 1.}} $\phi(0)=0$.

The claim is a direct application of Lemma \ref{Lemma:A=0} and the fact that the range of $\phi$ contains $\mathcal{I}_1^\pm(\X)$. \\ 
		
%%%%%%%%%%%%% 

{\sc Step 2}. \textit{$\phi$ is injective.} 

Let $\phi(A)=\phi(B)$. Obviously, $\phi(A)\circ Y \in \ix$ if and only if $  \phi(B)\circ Y \in \ix$ for every $Y$ in the range of $\phi$. It follows that $A\circ X  \in \ix$ if and only if $B\circ X \in \ix$ for every $X\in \bx$. By Lemma \ref{lem:operator}, $A=B$. \\

%%%%%%% STEP 2: identiteta gre v identiteto
{\sc{Step 3.}} \textit{$\phi(I)=I$ or $\phi(-I)=I$.}

As $I$ is in the range of $\phi$, there exists an $A$ such that $\phi(A)=I$. Clearly, $A^2=A\circ A$ is idempotent and by {\sc Step 1} and {\sc Step 2}, $A\neq 0$. It follows that either $A^2=I$ or  $A^2$ has a nontrivial kernel. Accordingly, we separate two cases. In each  we find an operator $B$   such that $A\circ B \in \mathcal{I}(X)$ and $B\circ B=B^2 \notin \mathcal{I}(X)$. Then $I\circ \phi(B)=\phi(B) \in \mathcal{I}(X)$ and, in turn, also $\phi(B)^2=\phi(B)\circ \phi(B)$ should be idempotent. But this contradicts the fact that $B^2$ is not idempotent.

{\sc Case 1. }  $A^2=I$. If $A$ is a scalar operator, we are done. Otherwise, $A$ is a nonscalar involution. 	Applying Lemma \ref{Lemma:id} provides a $B$ with the required properties to reach a contradiction.

{\sc Case 2. } $A^2=P$, where $P$ is an  idempotent with a nontrivial kernel (possibly $P=0$). It is easy to verify that both $\ker P$ and $\im P$ are $A$-invariant subspaces.  Define the  operator $N:\ker P\to \ker P$ as the restriction of $A$ to $\ker P$ by $Nx:=Ax$ for all $x\in \ker P$. Clearly, $N^2=0$. If   $N$ is nonzero, then $\dim \ker P >1$ and, we apply Lemma \ref{Lemma:id} to find a $B_0\in {\B}(\ker P)$ such that $N\circ B_0 \in \mathcal{I}(\ker P)$ and $B_0\circ B_0 \notin \mathcal{I}(\ker P)$. Then $B=B_0 \oplus 0_{\im P}$ will do the job. Finally, if the restriction of $A$ to $\ker P$ is equal to zero, then take any nonzero $x\in \ker P$, split $\ker P= \CC  x \oplus \mathcal{Y}$ and choose an $f\in \X'$ such that $f(x)=2$ and $f(\mathcal{Y}\oplus \im P)=0$. Now the  operator $B=x\otimes f$ has the required property.\\

%%%%%%%%%%%%%%%%%%%%%%%%%	
From now on, without loss of generality, we assume that $\phi(I)=I$. The map $X\mapsto -\phi(X)$ has the same properties as $\phi$, so by {\sc Step 3}, we get that $\phi(-I)=-I$. Consequently, $\phi({\ix})\subseteq\ix$ and $\phi(-{\ix})\subseteq-\ix$. Furthermore, observe that for every idempotent (anti-idempotent) $P$ in the range of $\phi$, its pre-image $\phi^{-1}(P)$ is an idempotent (anti-idempotent, resp.).\\

%%%%%%%% STEP: aditivnost med idempotentom in identiteto
{\sc Step 4.} \textit{For every $P\in{\I}_1(\X)$ we have $\phi(P-I)=\phi(P)-I$.}
		
As $\phi(-{\ix})\subseteq-\ix$, for an arbitrary rank-one idempotent $P$, we conclude that $\phi(P-I)=Q-I$ for some nontrivial $Q\in\ix$.  Since $Q$ is in the range of $\phi$, $\phi^{-1}(Q)$ is idempotent by the above observation. Hence, from $Q\circ (Q-I)=0$ we know that $\phi^{-1}(Q) \circ (P-I)$ is idempotent. More precisely, $\phi^{-1}(Q)\circ(P-I)=0$ by Lemma \ref{Lemma:PQ}. Now, as $ \rk P=1$ and  $\phi^{-1}(Q)$ is a nonzero idempotent, applying Lemma \ref{Lemma:PQ=0} gives that $\phi^{-1}(Q)=P$ and so, $\phi(P-I)=\phi(P)-I$. \\

%%%%%%%% STEP: ohranjanje ranga 1 na idempotentih
{\sc Step 5.} \textit{ $P\in\mathcal{I}_1(\X)$ if and only if $\phi(P)\in\mathcal{I}_1(\X)$.}
		
Let us take any $P\in \mathcal{I}_1(X)$ and note that $\phi(P)$ is a nontrivial idempotent. As in the range of $\phi$ there are all idempotents of rank one, we can choose an $R\in\mathcal{I}_1(\X)$ such that $R\circ (\phi(P)-I)=0$. The previous step gives us $ R\circ\phi(P-I)=0$ and in turn, by Lemma \ref{Lemma:PQ}, we obtain that $\phi^{-1}(R)\circ (P-I)=0$. Hence, as $\rk P=1$ and $\phi^{-1}(R)$ is a nonzero idempotent, Lemma \ref{Lemma:PQ=0} provides that $\phi^{-1}(R)=P$ and therefore,  $\rk \phi(P)=\rk R=1$. 
		
For the reverse implication, let $\phi(P) \in \mathcal{I}_1(\X)$. By the observation made immediately prior to {\sc Step 4}, it follows that $P$ is an idempotent; moreover, due to the injectivity of $\phi$, this preimage is unique and nontrivial. To show that $\rk P = 1$, suppose to the contrary that $\rk P > 1$. In that case, there exists a rank-one idempotent $S \neq P$ such that $S \circ (P - I) = 0$. Applying Step 4, we conclude that $\phi(S)\circ \phi(P-I)=\phi(S)\circ(\phi(P)-I)\in\ix$. Then, by Lemma \ref{Lemma:PQ}, it follows that $\phi(S)\circ(\phi(P)-I)=0$. Since $\phi(P)$ is a rank-one idempotent and $\phi(S)$ is a nonzero idempotent, Lemma \ref{Lemma:PQ=0} implies that $\phi(S) = \phi(P)$, contradicting the injectivity of $\phi$. Therefore, $\rk P = 1$, and hence $P \in \mathcal{I}_1(\X)$.\\

%%%%%%%% STEP: -P
{\sc Step 6.} \textit{For every $P\in{\I}_1(\X)$ we have $\phi(-P)=-\phi(P)$.}
		
The map $X\mapsto -\phi(-X)$, $X\in \bx$, inherits the same preserving properties as $\phi$. So, applying {\sc Step 5} for this particular map, $-\phi(-P)$ is a rank-one idempotent for every $P\in \I_1(\X)$.
				
Let us write $\phi(-P)=-Q$ for some $Q\in{\I}_1(\X)$ and we want to show that $Q=\phi(P)$. Using {\sc Step 4} and the identity $-P\circ (P-I)=0$ we obtain $\phi(-P)\circ\phi(P-I)=-Q\circ \left(\phi(P)-I\right)=Q\circ\left(I-\phi \left(P\right)\right)\in\ix$.  If $Q\neq\phi(P)$, by Lemma \ref{Lemma:-P} there exists an $R\in\I_1(\X)$ (and hence in the range of $\phi$) such that $Q\circ R=0$ and $\phi(P)\circ R\notin \ix$. From $Q\circ R=0$ it follows that also $-Q\circ R=0$ and therefore, $-P\circ\phi^{-1}(R)\in\ix$. Since $\phi^{-1}(R)$ is an idempotent (by the Observation preceding {\sc Step 4}), Lemma \ref{Lemma:PQ} provides that  $-P\circ \phi^{-1}(R)=0$. So,  $P\circ\phi^{-1}(R)=0$ while $\phi(P)\circ R\notin \ix$, a contradiction. Therefore, $Q=\phi(P)$ as desired.\\ 

%%%%%%%%%%%%%%%%%%
Let $\psi:\mathcal{I}_1(\X)\rightarrow\mathcal{I}_1(\X)$ with $\psi(P)=\phi(P)$ denote the restriction of $\phi$ to the set of rank-one idempotents. Note that the assertions established in {\sc Steps 4}, {\sc 5} and {\sc 6} are valid also for the map $\psi$.\\
		
%%%%%%% STEP : ohranjanje ortogonalnosti na idempotentih ranga 1
{\sc Step 7.} \textit{The map $\psi$ is bijective and preserves orthogonality in both directions, i.e. $P\bot Q$ if and only if $\psi(P)\bot \psi(Q)$, $P,Q\in{\mathcal{I}}_1(\X)$.} 
		
It is clear that $\psi$ is surjective by {\sc Step 5} and its injectivity is due to the injectivity of $\phi$. For any $P,Q\in{\I}_1(\X)$ with $P\bot Q$ we have $-P\circ Q=0$. Applying {\sc Step 6} we obtain $\phi(-P)\circ\phi(Q)=-\phi(P)\circ\phi(Q)\in \ix$. Using Lemma \ref{Lemma:PQ} we then conclude that $\phi(P)\circ\phi(Q)=0$, which shows that  $\phi(P)\bot\phi(Q)$. 
		
The proof of {\sc Step 7} can be closed by applying similar properties of $\psi^{-1}$.\\

Now, if $\X$ is infinite-dimensional the conditions of \cite[Theorem 2.4]{Semrl_commutativity} are met, and so,  there exists a bounded invertible linear or conjugate-linear operator $T:\X\rightarrow\X$ such that $\psi(P)= TPT^{-1}$, for $P\in \I_1(\X)$, or, $\psi(P)=TP^\prime T^{-1}$ for some bounded invertible linear or conjugate-linear operator $T:{\X}'\rightarrow\X$. Passing to the map $X\mapsto T^{-1}\phi(X)T$, $X\in \bx$, in the first case, and to the map $X \mapsto T^\prime \phi(X)^\prime {T^\prime}^{-1}$ otherwise (and identifying $\X^{\prime\prime}=\X$), where each of the new maps has the same properties as given $\phi$, we suppose that $\phi(P)=P$ for all $P\in \I_1(\X)$. Similarly, if $\X$ is finite-dimensional, properties of \cite[Theorem 2.3]{Semrl_commutativity} are fulfilled for both $\psi$ and $\psi^{-1}$. Hence, also in this case we can, without loss of any generality, assume that $\phi(P)=P$ for every $P\in\I_1(\X)$.
		
Having this information at hand, consider the map $X\mapsto-\phi(-X)$, possessing the same preserving properties as $\phi$. At first, we observe that it fixes all rank-one idempotents; this is due to {\sc Step 6} applied to $\phi$, and then that $\phi(-P)=-P$ for all $P\in\mathcal{I}_1(\X)$. To sum up, $\phi(P)=P$, for every $P\in\mathcal{I}_1^\pm(\X)$.\\

%%%%%%%%%%%

{\sc Step 8.} \textit{For every $A\in\bx$ and every $P\in\mathcal{I}_1^\pm(\X)$ we have $A\circ P\in{\ixa}\Leftrightarrow\phi(A)\circ P\in\ixa$ and $A\circ P=0\Leftrightarrow\phi(A)\circ P=0$.}

The claim is a direct consequence of Observation \ref{Observation}. Indeed, let us suppose that there exists an $A \in \mathcal{B}(\X)$ and $P \in \mathcal{I}_1^\pm(\X)$ such that $A \circ P\in\ixa$ and $\phi(A)\circ P=0$. By Observation \ref{Observation} we then obtain that $A\circ(-P)\notin\ix$ and so, $\phi(A)\circ(-P)\notin\ix$, leading to a contradiction with $\phi(A)\circ P=0$.\\	

%%%%%%% STEP: phi(P-Q)=P-Q za ortogonalna P,Q ranga 
{\sc Step 9.} {\textit{$\phi(K)=K$ for every $K\in{\mathcal K}(\X)$. Consequently, for every $A\in\bx$ and $K\in{\mathcal K}(\X)$ we have $A\circ K\in\ixa\Leftrightarrow \phi(A)\circ K\in\ixa$ and $A\circ K=0\Leftrightarrow \phi(A)\circ K=0$.}}

Applying {\sc Step 8}, the first part of the claim is an immediate consequence of Corollary \ref{corr:D}. As for the second part, observe that $K \in \mathcal{K}(\X)$ if and only if $-K \in \mathcal{K}(\X)$. Hence, we have $\phi(K) = K$ and $\phi(-K) = -K$ for every such operator $K$. The conclusion then follows directly from Observation \ref{Observation}.\\

%%%%%%%% STEP: \phi(A)=A za nilpotente
{\sc Step 10.} \textit{$\phi(N)=N$ for every $N\in{\mathcal N}_1(\X)$. Consequently, for every $A\in\bx$ and $N\in{\mathcal N}_1(\X)$ we then have $A\circ N\in{\ixa}\Leftrightarrow\phi(A)\circ N\in\ixa$ and $A\circ N=0\Leftrightarrow\phi(A)\circ N=0$.}

For an arbitrary $N\in\mathcal{N}_1(\X)$, we can take a nonzero $x\in\X$ and $g\in{\X}'$ with $g(x)=0$ such that $N=x\otimes g$. Then, take an $y\in\X$ with $g(y)=1$ and, since $x$ and $y$ are linearly independent, we can choose a functional $f\in{\X}'$ such that $f(x)=1$ and $f(y)=0$. The operators $P:=x\otimes f$ and $Q:=y\otimes g$ are now orthogonal rank-one idempotents, and hence, $R:=P+Q$ is a nontrivial idempotent. From $N\circ(I-R)=0$, {\sc Step 9} gives that  $\phi(N)\circ(I-R)=0$. Furthermore, Lemma \ref{Lemma:PQ=0} ensures that $\phi(N)x=0$ for every $x\in\ker R$ and that $\im R$ is $\phi(N)$-invariant subspace. 

Hence, according to the decomposition ${\mathcal X}=\im R\oplus\ker R$ and choosing an appropriate basis in $\im R$, the operators $N$ and $\phi(N)$ have the following matrix representations:
$$
N=\begin{bmatrix}0&1\\0&0\end{bmatrix}\oplus0
\qquad\text{and}\qquad
\phi(N)=\begin{bmatrix}a&b\\c&d\end{bmatrix}\oplus0.
$$
Now, for every $\alpha,\beta\in\CC$, we can introduce the family of rank-two tripotent operators 
$$
K_{\alpha}=\begin{bmatrix}1&\alpha\\0&-1\end{bmatrix}\oplus0
\qquad\text{and}\qquad
H_{\beta}=\begin{bmatrix}1&0\\\beta&-1\end{bmatrix}\oplus0.
$$
Since $N\circ K_{\alpha}=0$ for every $\alpha$, and $N\circ H_{2}\in{\mathcal I}^*(\X)$, applying {\sc Step 9}  gives that
$$
\phi(N)\circ \phi(K_{\alpha})=\phi(N)\circ K_{\alpha}=
\begin{bmatrix}
a+\frac{c}{2}\alpha & \frac{(a+d)}{2}\alpha\\
0&-d+\frac{c}{2}\alpha
\end{bmatrix}\oplus0=0
$$
and
$$
\phi(N)\circ \phi(H_{2})= \phi(N)\circ H_{2}=
\begin{bmatrix}
a+b & 0\\
a+d&b-d
\end{bmatrix}\oplus0\in{\mathcal I}^*(\X),
$$
for every $\alpha\in\CC$. From the first equation, we obtain $a=c=d=0$, and from the second property, we conclude that $b=1$. Thus, $\phi(N)=N$ as desired.

As $\phi(N)=N$ and $\phi(-N)=-N$ for every $N\in\mathcal{N}_1(\X)$, the consequence follows immediately from Observation \ref{Observation}.\\

%%%%%% STEP: \phi(A)=A za trace-zero operatorje ranga 2
{\sc Step 11.} \textit{$\phi(\lambda(P-Q))= \lambda(P-Q)$ for every orthogonal $P,Q\in{\mathcal I}_1(\X)$ and every nonzero  $\lambda\in\CC$. }

For arbitrary orthogonal $P,Q\in {\mathcal I}_1(\X)$ set  $K:=\lambda(P-Q)$ and $R:=P+Q\in\ix$. Since $K\circ(I-R)=0$, it follows that $\phi(K)\circ(I-R)=0$ by {\sc Step 9}. Applying Lemma \ref{Lemma:PQ=0} we conclude that $Kx=0=\phi(K)x$ for every $x\in\ker R$ and that $\im R$  is $\phi(K)$-invariant subspace. With respect to the  decomposition ${\X}=\im R\oplus\ker R$ and choosing a suitable basis in $\im R$,  we can write $K$ and $\phi(K)$ in the following matrix forms:
$$ 
K=\begin{bmatrix}\lambda&0\\0&-\lambda\end{bmatrix}\oplus0
\qquad\text{and}\qquad
\phi(K)=\begin{bmatrix}a&b\\c&d\end{bmatrix}\oplus0.
$$
Next, for any $\alpha,\beta\in\CC$, consider the families of rank-one nilpotents
$$
N_\alpha=\begin{bmatrix}
0&\alpha\\0&0    
\end{bmatrix}\oplus0
\qquad\text{and}\qquad
M_\beta=\begin{bmatrix}
0&0\\\beta&0    
\end{bmatrix}\oplus0.
$$
As $K\circ N_{\alpha}=0$ and $K\circ M_{\beta}=0$ for all $\alpha,\beta\in\CC$, using {\sc Step 10} gives that
$$
\phi(K)\circ\phi(N_\alpha)=\phi(K)\circ N_{\alpha}=\frac12\begin{bmatrix}c\alpha&(a+d)\alpha\\0&c\alpha\end{bmatrix}\oplus0=0
$$
and
$$
\phi(K)\circ\phi(M_\beta)=\phi(K)\circ M_{\beta}=\frac12\begin{bmatrix}b\beta&0\\(a+d)\beta&b\beta\end{bmatrix}\oplus0=0,
$$
for all $\alpha,\beta\in\CC$. This implies that $c=b=0$ and $d=-a$.  In turn, $\phi(K) = \frac{a}{\lambda}K$. Let us then introduce a rank-one nilpotent 
$$N_\lambda = \begin{bmatrix}\lambda^{-1} & -1\\ \lambda^{-2} & -\lambda^{-1}     \end{bmatrix} \oplus 0. $$
Then $K\circ N_\lambda = I\oplus 0$, while $\frac{a}{\lambda}K\circ N_\lambda = \frac{a}{\lambda}I\oplus 0$, obviously forcing $a=\lambda $ since $a\neq 0$ by {\sc Step 1}. So, $\phi(K) = K$.\\

%%%%%% STEP 10: \phi(A)=A za vsak večkratnik idempotenta
{\sc Step 12.}  \textit{$\phi(\lambda P)=\lambda P$ for every $P\in{\mathcal I}_1(\X)$ and nonzero $\lambda\in\mathbb C$.}

As $\lambda P$ is now of rank one, Lemma \ref{Lemma:PQ=0} ensures that $\phi(\lambda P)=\mu P$ for some $\mu\in{\CC}^\ast$. Then, set $K:=\lambda^{-1}(P-Q)$, where $Q\in \mathcal{I}_1(\X)$ is orthogonal on $P$. Since $\lambda P\circ K=P\in\ixa$, we can conclude that $\phi(\lambda P)\circ\phi(K)=\mu P\circ K=\frac{\mu}{\lambda}P\in\ixa$ by {\sc Step 11}. Therefore, $\mu=\lambda$ and so,  $\phi(\lambda P)=\lambda P$.\\

%%%%%%%%%%%%%%%% "triangular" operators
{\sc Step 13.} \textit{Let $\lambda\in{\CC}^\ast$ be an arbitrary. Then $\phi(T)=T$ for every $T\in {\mathcal T}(k;\lambda)$, where $k<\dim\X$.}

We apply induction on $k$. Since the operator $T$ can be written as $T=J_k(\lambda)\oplus0$ with respect to some basis of $\X$, Lemma \ref{Lemma:PQ} provides that $\phi(T)=B_{11}\oplus0$ with respect to the same basis of $\X$. The aim is to see that $B_{11}=J_k(\lambda)$.

For $k=1$ the claim holds true by Corollary \ref{corr:J1}. Then, assuming the claim holds for some $k$, applying Lemma \ref{lem:Jn(lambda)} at the induction step completes the proof.\\

%%%%%%%%%%%%%%%%%%%%%%%%%%%%%%%%%%%
{\sc Step 14.} \textit{$\phi(A)=A$ for every $A\in \bx$.}

At first, let $A=\lambda I$ for some $\lambda\in{\CC}^\ast$. Hence, from $A\circ\lambda^{-1}P=P\in\ixa$ for every $P\in{\mathcal I}_1(\X)$, it follows that $\phi(A)\circ\lambda^{-1}P=\lambda^{-1}\phi(A)\circ P\in\ixa$ for every $P\in{\mathcal I}_1(\X)$. Lemma \ref{Lemma:A=I} then provides that $\phi(A)=\lambda I$.

Let us now assume that $A$ is not scalar operator and denote $B:=\phi(A)$. So far, we have shown that
$A\circ X\in{\ixa}\Leftrightarrow B\circ X\in{\ixa}$
and 
$A\circ X=0\Leftrightarrow B\circ X=0$,
for every $X\in\mathcal{T}$. Then Lemma \ref{lem:operator} finally closes the proof.
\qed\\

%%%%%%%%%%%%%%%%%%%%%%%%%%%%%%%%%%%%%%%%%%%%%%%%%%%
\noindent\textbf{Proof of Theorems \ref{main_result2} and \ref{main_result2_matrix}.} Define the map $\psi:{\B(\X)}\rightarrow\B(\X)$ by $\psi(X)=\sqrt{2\alpha}\;\phi\left(\frac{1}{\sqrt{2\alpha}}X\right)$, for every $X\in\B(\X)$. The map $\psi$ is well-defined and satisfies the assumptions of Theorem \ref{main_result2} in the infinite-dimensional setting. For the finite-dimensional case, we can assume without loss of any generality that the map $\phi$ is of the form $\psi([x_{ij}])=[\sigma(x_{ij})]$ by Theorem \ref{main_result2_matrix}. Then $$\phi([x_{ij}])=1/\sqrt{2\alpha}\,\psi([\sqrt{2\alpha}x_{ij}])=\sigma(\sqrt{2\alpha})/\sqrt{2\alpha}\, [\sigma(x_{ij})] =c_\alpha [\sigma(x_{ij})]. $$ 
Thus, the desired result follows.
\qed

\vskip 1pc \noindent{\bf Acknowledgments.} 
Researchers were partially supported by Slovenian research agency
ARIS, program no.\ P1-0288. 

%%%%%%%%%%%%%%%%%%%%%%%%%%%%%
%
% L I T E R A T U R A
%
%%%%%%%%%%%%%%%%%%%%%%%%%%%% 

%%%%%%%%%%%%%%%%%%%%%%%%%%%%%%%%%%%%%%%%%%%%%%

\end{document}